\newcommand{\version}{Version of \today}
\theoremstyle{plain}
\newtheorem{theorem}{Theorem}
\newtheorem{corollary}[theorem]{Corollary}
\newtheorem{lemma}[theorem]{Lemma}
\newtheorem{proposition}[theorem]{Proposition}
\theoremstyle{definition}
\newtheorem{remark}[theorem]{Remark}
\newtheorem*{remark*}{Remark}
\tikzset{%
  >=latex, 
  inner sep=0pt,%
  outer sep=2pt,%
  mark coordinate/.style={inner sep=0pt,outer sep=0pt,minimum size=3pt,
    fill=black,circle}%
}
\renewcommand {\epsilon}{\varepsilon}
\renewcommand {\leq}{\leqslant}
\renewcommand {\geq}{\geqslant}
\newcommand{\pr}{\mathbf P}
\newcommand{\e}{\mathbf E}
\DeclareMathOperator{\dist}{dist}
\DeclareMathOperator{\Vol}{Vol}
\DeclareMathOperator{\bm}{bm}
\begin{document}
\title[Boundary behavior of random walks in cones]
{Boundary behavior of random walks in cones}
\thanks{This project has received funding from the European Research Council (ERC) under the European Union's Horizon 2020 research and innovation programme under the Grant Agreement No 759702.}
\thanks{\version}

\author[K. Raschel]{Kilian Raschel} 
\address{CNRS, Institut Denis Poisson, Universit\'e de Tours et Universit\'e d'Orl\'eans, France}
\email{raschel@math.cnrs.fr}
       
\author[P. Tarrago]{Pierre Tarrago} 
\address{Laboratoire de Probabilit\'es, Statistique et Mod\'elisation, Sorbonne Universit\'e,~France}
\email{pierre.tarrago@upmc.fr}

\begin{abstract}
We study the asymptotic behavior of zero-drift random walks confined to multidimensional convex cones, when the endpoint is close to the boundary. We derive a local limit theorem in the fluctuation regime.
\end{abstract}

\keywords{Random walk, cone, exit time, harmonic function, Brownian motion, coupling, heat kernel}
\subjclass{Primary 60G50; secondary 60G40, 60F17} 

\maketitle

\section{Introduction}

\subsubsection*{Local limit theorems for random walks}
Given a random walk $\{S(n)\}_{n\geq0}$ on ${\bf Z}^d$ started at the origin ($S(0)=0$), a local limit theorem consists in the asymptotic derivation of the local probabilities
\begin{equation}
\label{eq:local_proba}
    \pr(S(n) = y)
\end{equation}
as $n$ goes to infinity. The classical situation corresponds to a fixed ending point $y\in{\bf Z}^d$, but other interesting regimes exist when $y=y_n$ is allowed to depend on $n$ (for example, as $\vert y\vert$ goes to infinity with $n$). For instance, if the random walk is adapted and aperiodic, has finite second moment and zero drift (i.e., $\e[ S(1)^2]<\infty$ and $\e[S(1)]=0$), then the following \textit{local convergence result} holds
\begin{equation}
\label{eq:LLT_not_constrained}
     \pr(S(n) = y) \sim \frac{1}{(2\pi n)^{d/2}} \frac{1}{\vert \det Q\vert^{1/2}} \exp\left(-\frac{\langle y,Q^{-1}y\rangle}{2n}\right),
\end{equation}
where $Q$ is the covariance matrix, see \cite{GnKo-54,Sp-76}. Two features of the above asymptotics should be noted: first, the critical exponent (i.e., the exponent of $1/n$) is $d/2$; second, the term $\exp\bigl(-\frac{\vert y\vert^2}{2n}\bigr)$ (modified by the inverse of the covariance matrix) is typical for such local convergences with \textit{Gaussian estimates}. In particular, $\pr(S(n) = y)$ goes to zero polynomially fast in the fluctuation regime (when ${\vert y\vert}/{\sqrt{n}}$ is bounded) and exponentially fast in the large deviation regime (as $\vert y\vert/\sqrt{n}$ goes to infinity). Incidentally, the previous result \eqref{eq:LLT_not_constrained} implies the recurrence of zero-mean random walks in dimensions one and two, and transience if $d\geq 3$, a celebrated result due to P\'olya \cite{Po-21} for simple random walks. 
The local limit theorem \eqref{eq:LLT_not_constrained} can be adapted to random walks with non-zero drift \cite{NeSp-66}, periodic lattice random walks \cite{Sp-76}, non-lattice random walks \cite{St-65}, time- and space-inhomogeneous random walks \cite{Mu-06}, etc.; we will not consider these issues here.


Let now $K$ be a subdomain of ${\bf Z}^d$ and let $\tau_x$ be the associated exit time from $K$ for the random walk started at $x$, i.e., 
\begin{equation}
\label{eq:exit_time_def}
     \tau_x=\inf\{n\geq 1 : x+S(n)\notin K\}.
\end{equation} 
In comparison with the unconstrained case, establishing a \textit{constrained} local limit theorem, i.e., studying the asymptotic behavior of
\begin{equation}
\label{eq:local_proba_constrained}
     \pr(x+S(n) = y,\tau_x>n),
\end{equation}
happens to be much more complex. The presence of $x$ in the probability \eqref{eq:local_proba_constrained} is due to the fact that contrary to ${\bf Z}^d$, a proper subdomain $K$ is not translation-invariant, and so the starting point will have an influence.

In the present work, we will deal with conical subdomains $K$. Random walks in cones are indeed very important in probability theory, as they appear in multiple natural contexts: nonintersecting paths or random walks in Weyl chambers \cite{St-90,EiKo-08,KoSc-10,DeWa-10,Fe-14}, non-colliding random processes \cite{KoOCRo-02}, eigenvalues of Dyson Brownian motion \cite{Dy-62}, random walks in the quarter plane \cite{BMMi-10,FaIaMa-17}, queueing theory \cite{CoBo-83}, finance \cite{CodL-13}, modeling of some populations in biology \cite{BiTr-12}, etc. As these random walk models are in bijection with many other discrete models (maps, permutations, trees, Young tableaux, partitions), they are also intensively studied in combinatorics \cite{BMMi-10,BoBMKaMe-16,DrHaRoSi-18}.

Let us now review the literature regarding local limit theorems in cones. In dimension one, there is essentially a unique cone, namely, the positive half-line. As a consequence, one-dimensional random walks in cones are equivalent to the model of \textit{positive random walks}, on which there exists a large literature, see \cite{Ig-74,AlDo-99,BrDo-06,VaWa-09,Do-12} and references therein. The theory of fluctuations of positive random walks is actually well understood; in dimension one, the \textit{Wiener-Hopf factorization} turns out to represent a crucial tool, allowing to deduce the asymptotics of \eqref{eq:local_proba_constrained} in various regimes.

The problem is of much greater complexity in dimension $d\geq2$, one of the reasons being the lack of a multidimensional version of the Wiener-Hopf factorization. However, let us mention a few major contributions in that field. In \cite{Va-99}, motivated by applications to the analysis on Lie groups, Varopoulos derives an upper bound for the probability~\eqref{eq:local_proba_constrained}. More precisely, \cite[Thm~5]{Va-99} states the following Gaussian estimate for random walks with zero drift, identity covariance matrix and bounded increments:
\begin{equation}
\label{eq:lower_bound_Varo}
     \pr(x+S(n) = y,\tau_x>n)\leq C\frac{u(x+x_0) u(y+x_0)}{n^{p+d/2}} \exp\left( -\frac{\vert x-y\vert^2}{cn}\right),
\end{equation}
where $c$ and $C$ are positive constants, $x_0$ is some fixed point in the interior of the cone and $u$ is the unique harmonic function positive in the cone $K$ and vanishing on the boundary; $u$ is usually called the \textit{r\'eduite} of the cone. The quantity $p$ in \eqref{eq:lower_bound_Varo} is positive and may be interpreted as the homogeneity exponent of the function $u$, i.e., for all $x\in K$ and $t>0$, 
\begin{equation}
\label{eq:homogeneity_reduite}
     u(tx)=t^pu(x).
\end{equation}
The exponent $p$ depends continuously on the geometry of the cone (indeed, it is directly related to the smallest eigenvalue of an eigenvalue problem with Dirichlet conditions, see \cite{De-87}). For instance, in dimension two, $p$ equals $\pi$ divided by the opening of the wedge. This has the interesting consequence that the critical exponent $p+d/2$ in \eqref{eq:lower_bound_Varo} may be an irrational number. To the authors' knowledge, Varopoulos' paper provides one of the first appearances of irrational critical exponents in the random walk literature. Assuming the covariance matrix to be the identity in \eqref{eq:lower_bound_Varo} is not really a constraint, as doing a linear change of variable allows to pass from an arbitrary covariance matrix to the identity (obviously, this change of variable will have an effect on the cone).

Ben-Salem, Mustapha and Sifi \cite{BSMuSi-14} obtain upper Gaussian estimates of transition probabilities of some models of \textit{space-inhomogeneous random walks} on the positive quadrant. These bounds have a form comparable to \eqref{eq:lower_bound_Varo}. Among the most important steps in their proof are comparison arguments based on discrete variants of the Harnack principle and large deviations estimates. See also the combinatorial results of Mustapha \cite{DALaMu-16,Mu-19}.

\textit{Weyl chambers} form another class of examples for which the multidimensional local limit theorem \eqref{eq:local_proba_constrained} is known, see \cite{St-90,EiKo-08,KoSc-10,DeWa-10,Fe-14}. The particular structure and the rigidity of these cones allow to do precise computations; in the simplest cases of reflectable walks, one can even apply the reflection principle \cite{GrMa-93,Fe-14}. See \cite[Sec.~1.6]{DeWa-15} for a more complete exposition on random walks in Weyl chambers.

Another source of examples is given by combinatorics. Indeed, there has been recently an important interest in the combinatorial model of walks in the quarter plane, and more generally \textit{walks confined in orthants}, see \cite{BMMi-10,BoBMKaMe-16,DALaMu-16,DrHaRoSi-18,Mu-19}. In particular, for some models, exact expressions for the generating functions counting these numbers of walks are obtained \cite{BMMi-10,BoBMKaMe-16}. Up to a scaling, these results are easily turned into probabilistic local limit theorems. However, such exact expressions require a very strong structure of the associated model (in some sense, an algebraic variant of the reflection principle should apply), and as a consequence they are derived for a relatively small number of models.

Let us now mention the breakthrough paper \cite{DeWa-15}, in which the authors prove for a large class of cones and random walks with zero drift, identity covariance matrix and $\max\{2+\varepsilon,p\}$ moments, that
\begin{equation}
\label{eq:LLT_DeWa}
     \pr(x+S(n) = y,\tau_x>n)\sim \kappa\cdot V(x)\cdot V'(y)\cdot n^{-p-d/2},
\end{equation}
where $\kappa$ is a positive constant, $V$ and $V'$ are \textit{discrete harmonic functions} (we shall study these functions in more details in Section \ref{sec:statements}), and $p$ is as in \eqref{eq:homogeneity_reduite}, see \cite[Thm~6]{DeWa-15}. Obviously the critical exponents in \eqref{eq:lower_bound_Varo} and \eqref{eq:LLT_DeWa} coincide. A version of the above result exists for random walks with drift and non-identity covariance matrix, performing an exponential change of measure and a linear change of coordinates. The methods employed in proving \eqref{eq:LLT_DeWa} use various arguments and fine estimates concerning the behavior of random walks. Of particular interest is a \textit{coupling of the random walk with Brownian motion}, for which the authors use results from \cite{GoZa-09} on the quality of the normal approximation. Another important point is to construct (from the classical harmonic function of Brownian motion) and to study the discrete harmonic functions $V$ and $V'$. Notice that $V$ and $V'$ in \eqref{eq:LLT_DeWa} are asymptotically equivalent to the r\'eduite $u$ in \eqref{eq:homogeneity_reduite}, by \cite[Lem.~13]{DeWa-15}.

\subsubsection*{A glimpse of our results}

In this article, we refine the local limit theorem \eqref{eq:LLT_DeWa} of Denisov and Wachtel, by allowing $y$ to vary with $n$. More precisely, we will derive the asymptotic behavior of \eqref{eq:local_proba_constrained} in the fluctuation regime, meaning that $\vert y\vert/\sqrt{n}$ is bounded. Under the additional assumption that $\dist(y,\partial {K})\geq n^{1/2-\epsilon}$ for some small $\epsilon>0$, we shall prove a uniform Gaussian local convergence
\begin{equation}
\label{eq:Gaussian_estimate_Cor2}
     \pr(x+S(n)=y,\tau_{x}>n)\sim\kappa\cdot V(x)\cdot u(y)\cdot n^{-p-d/2}\cdot \exp\left(-\frac{\vert y\vert^{2}}{2n}\right),
\end{equation}
see Corollary \ref{cor:asymp_probab_boundary}. We have a more precise version of \eqref{eq:Gaussian_estimate_Cor2}, in which $y$ may tend to infinity but does not need to be far away from the boundary, see Theorem \ref{thm:asymp_probab_boundary}. Theorem \ref{thm:asymp_probab_boundary} is the central result of our article (it contains the initial statement \eqref{eq:LLT_DeWa} of Denisov and Wachtel) and is at the origin of its title. Indeed, the influence of the geometry of the cone is much stronger in the boundary case of Theorem \ref{thm:asymp_probab_boundary}, in comparison with the interior case \eqref{eq:Gaussian_estimate_Cor2}. We don't state Theorem \ref{thm:asymp_probab_boundary} right here, because it needs a number of additional notations. Our method consists in \textit{combining Gaussian estimates and the coupling} of the random walk by Brownian motion.

\subsubsection*{Applications of our results}

Let us give four main features of our results. Our first motivation is related to the \textit{Green function} 
\begin{equation*}
     G(x,y) = \sum_{n=0}^\infty \pr(x+S(n)=y,\tau_x>n)
\end{equation*}
of the random walk in the cone $K$. The precise asymptotics of the local probabilities \eqref{eq:local_proba_constrained} that we derive in this paper is a first step on the way of determining the asymptotic behavior of the Green function as $\vert y\vert$ grows to infinity. This question is the topic of the separate work \cite{DuRaTaWa-20}, in collaboration with Jetlir Duraj and Vitali Wachtel; moreover, from the Green function's asymptotics, we also deduce in \cite{DuRaTaWa-20} the uniqueness of the discrete harmonic function for random walks in cones, and thereby the structure of the Martin boundary. 

Secondly, our Theorem \ref{thm:asymp_probab_boundary} \textit{generalizes the local limit theorem} \eqref{eq:LLT_DeWa}. This understanding at a higher level allows us to shed light into the underlying mechanisms of Denisov and Wachtel's local limit theorem.

Our third application concerns the enumeration of walks in cones. Given a cone (typically the orthant ${\bf N}^d$) and a set of steps, the question is to compute (in an exact or asymptotic way) the number of excursions, i.e., the number of paths of length $n$ going from $x$ to $y$ and confined to the cone. All our results concerning the local probability $\pr(x+S(n)=y,\tau_{x}>n)$ (especially Theorem \ref{thm:asymp_probab_boundary} and Corollary \ref{cor:asymp_probab_boundary}) can straightforwardly be turned into combinatorial results, giving the asymptotic behavior of the number of excursions. This discussion (in particular, the precise combinatorial traduction of the probabilistic results) is detailed in \cite[Sec.~1.5]{DeWa-15}, so we refer to \cite{DeWa-15} for further information.

Let us finally notice that the latter combinatorial interpretation can lead to new \textit{asymptotics in combinatorial representation theory}. A recurrent problem is to give asymptotic formula for the fusion rules of a Lie algebra, which describe the decomposition of an iterated tensor product of a given representation into irreducible representations. In particular, a central question is to give the Martin boundary of the multiplicative graph generated by the latter construction (see \cite{LeTa-16} for a precise definition of the multiplicative graph and a description of the minimal boundary). In his seminal paper \cite{Lit-94}, Littelmann shows that the decomposition of irreducible representations is encoded by the concatenation of paths in the Weyl chamber of the Lie algebra, which is a convex cone. Therefore, the local limit theorems given in the present paper should give the asymptotic results in the fluctuation regime. Although these results are not enough to deduce the Martin boundary of the corresponding multiplicative graph, they provide a first step in this direction. An important improvement would be to get the local limit theorems in the large deviation regime, which would indeed complete the description of the Martin boundary. 

\subsubsection*{Structure and sketch of the results}
Our paper is organized as follows: in Section \ref{sec:statements}, we state our main results (Theorem \ref{thm:asymp_probab_boundary} and Corollary \ref{cor:asymp_probab_boundary}). We prove in Section \ref{sec:preliminary_results} Gaussian estimates for the heat kernel in a cone and we report on the coupling approach of \cite{DeWa-15}. In Section \ref{sec:mesoscopic}, we prove our main results. Some technical proofs are postponed to Appendix \ref{appendix_heat_estimates}. In Appendix \ref{Fuk-Nagaev-cone}, we give some useful Fuk-Nagaev inequalities for random walks in a cone.

\subsubsection*{Acknowledgments} 
We would like to thank Sami Mustapha and Vitali Wachtel for useful discussions and bibliographic advices. We also express our deepest gratitude to the referee for very careful readings and many useful suggestions.

\section{Statement of the main results}
\label{sec:statements}

\subsubsection*{Notations and assumptions on the cones and random walks}
Let us start by presenting our hypotheses, which are of three types: some of them only concern the random walk (namely \ref{H:lattice}, \ref{H:drift_zero}, \ref{H:cov_id} and \ref{H:RW_aperiodic}), the assumption \ref{H:regularity_cone} is a restriction on the cone, while the last ones (\ref{H:strongly_irreducible} and \ref{H:moments}) concern the behavior of the random walk in the cone.

Consider a random walk $\{S(n)\}_{n\geq1}$ on ${\bf R}^d$, $d\geq1$, where
\begin{equation*}
     S(n) = X(1)+\cdots +X(n)
\end{equation*}
and $\{X(n)\}_{n\geq1}$ is a family of independent and identically distributed (i.i.d) copies of a random variable $X=(X_1,\ldots,X_d)$. We assume that: 
\begin{enumerate}[label=($H\arabic{*}$),ref=($H\arabic{*}$)]
     \item\label{H:lattice}the random variable $X$ is lattice,
     \item\label{H:drift_zero}$\e[X]=0$,
     \item\label{H:cov_id}$\text{cov}(X_{i},X_{j})=\delta_{i,j}$,
     \item\label{H:RW_aperiodic}the random walk is aperiodic.
\end{enumerate}     
Notice that \ref{H:cov_id} is not a restriction: we may always perform a linear transform so as to decorrelate the random walk (obviously this linear transform changes the cone in which the walk is defined). 

Denote by ${\bf S}^{d-1}$ the unit sphere of ${\bf R}^d$ and by $\Sigma$ an open, connected subset of ${\bf S}^{d-1}$. Let $K$ be the cone generated by the rays emanating from the origin and passing through $\Sigma$, i.e., $\Sigma=K\cap {\bf S}^{d-1}$; see Figure \ref{fig:cones} for two examples. In this paper, we suppose that:
\begin{enumerate}[label=($H\arabic{*}$),ref=($H\arabic{*}$)]
\setcounter{enumi}{4}
     \item\label{H:regularity_cone}the cone $K$ is convex.
     \end{enumerate} 
When $K$ is convex, at each point $q\in\partial \Sigma$, there exists a non-trivial closed ball $B$ in ${\bf S}^{d-1}$ such that $B\cap \Sigma=q$. Hence, by standard analytic results \cite[Thm~6.13]{GiT-01}, $\Sigma$ is regular for the Dirichlet problem. In particular, there exists a function $u$ harmonic on ${K}$, i.e., $\Delta u=0$, such that $u$ is positive in $K$ and $u_{\partial {K}}=0$, $\partial {K}$ denoting the boundary of $K$ (see for example the introduction of \cite{BaSm-97}). This function is unique up to scalar multiplication, see \cite[Cor.~6.10 and Rem.~6.11]{GySal-11}, and is called the r\'eduite of ${K}$. It is homogeneous (or radial) in the sense of \eqref{eq:homogeneity_reduite} and the homogeneity exponent $p$ is also called the exponent of the cone ${K}$. 

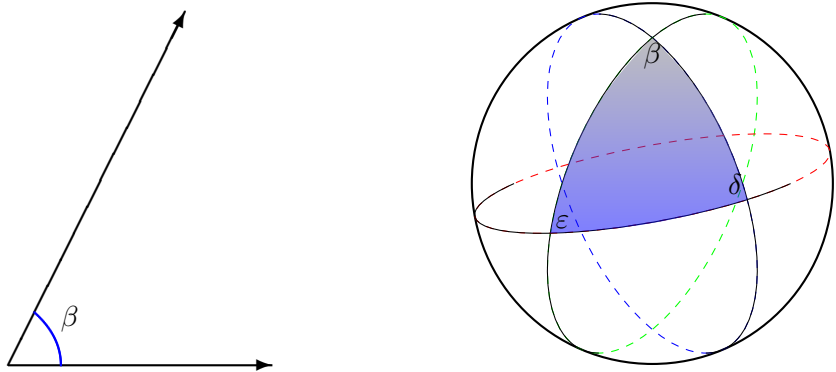
\begin{figure}[ht]
\begin{picture}(0,0)
    \thicklines
    \put(0,0){\textcolor{black}{\vector(1,0){100}}}
    \put(0,0){\textcolor{black}{\vector(1,2){67}}}
    \thicklines
    \put(20,15){$\beta$}
    \textcolor{blue}{\qbezier(20,0)(20,11)(10,20)}
    \end{picture} \qquad\qquad\qquad\qquad\qquad\qquad\qquad
\tdplotsetmaincoords{90}{90}
\begin{tikzpicture}[scale=3,tdplot_main_coords]

\tdplotsetthetaplanecoords{90}
\tdplotdrawarc[tdplot_rotated_coords,thick]{(0,0,0)}{0.8}{0}{360}{}{}
\tdplotsetrotatedcoords{60}{70}{0}
\tdplotdrawarc[dashed,tdplot_rotated_coords,name path=blue,color=blue]{(0,0,0)}{0.8}{0}{360}{}{}
\tdplotdrawarc[tdplot_rotated_coords]{(0,0,0)}{0.8}{0}{180}{}{}
\tdplotsetrotatedcoords{120}{110}{0}
\tdplotdrawarc[dashed,tdplot_rotated_coords,name path=green,color=green]{(0,0,0)}{0.8}{0}{360}{}{}
\tdplotdrawarc[tdplot_rotated_coords]{(0,0,0)}{0.8}{0}{180}{}{}
\tdplotsetrotatedcoords{220}{16}{0}
\tdplotdrawarc[dashed,tdplot_rotated_coords,name path=red,color=red]{(0,0,0)}{0.8}{0}{360}{}{}
\tdplotdrawarc[tdplot_rotated_coords]{(0,0,0)}{0.8}{-90}{90}{}{}


\path [name intersections={of={green and blue}, total=\n}]  
\foreach \i in {1,...,\n}{(intersection-\i) circle [radius=0.5pt] coordinate(gb\i){}};

\path [name intersections={of={green and red}, total=\n}]  
\foreach \i in {1,...,\n} {(intersection-\i) circle [radius=0.5pt]coordinate(gr\i){}};

\path[name intersections={of={red and blue}, total=\n}]  
\foreach \i in {1,...,\n}{(intersection-\i) circle [radius=0.5pt]coordinate(rb\i){}};

\shade[top color=gray,bottom color=blue,opacity=0.5]  
(rb3) to[bend left=7] (gr1) to[bend left=17] (gb2) to[bend left=15] cycle;

\draw (gb2) node[below]{$\beta$};
\draw (rb3) node[above left]{$\delta$};
\draw (gr1) node[above right]{$\varepsilon$};
\end{tikzpicture}
\caption{In dimension $2$, $\Sigma$ is an arc of circle and the cone $K$ is a wedge of opening $\beta$. In dimension $3$, any section $\Sigma\subset{\bf S}^2$ can be taken. The picture on the right gives the example of a spherical triangle on the sphere ${\bf S}^2$, corresponding to the orthant $K={\bf N}^3$ (after possible decorrelation of the coordinates, see \ref{H:cov_id}).}
\label{fig:cones}
\end{figure}

Our final hypotheses deal with the behavior of the random walk in the cone $K$. First, we require a form of irreducibility for the random walk, which is an adaptation to unbounded random walks of the concept of reachability condition from infinity introduced in \cite{BoBMMe-18}. From now on, we fix an origin $x_{0}\in{\bf R}^{d}$ and denote by $\Lambda$ the lattice generated by the random walk starting at $x_{0}$. We denote by $\{S'(n)\}_{n\geq 1}$ the reverse random walk, which is the sum of the increments $\{X'(n)\}_{n\geq 1}$, i.i.d, independent from $\{X(n)\}_{n\geq 1}$ and such that $X'(n)$ is distributed as $-X$. (In the sequel, every quantity involving $S'$ will be denoted similarly as the same quantity involving $S$, with a prime added at the right.)
\begin{enumerate}[label=($H\arabic{*}$),ref=($H\arabic{*}$)]
\setcounter{enumi}{5}
     \item\label{H:strongly_irreducible}The reversed random walk $S'$ is asymptotically strongly irreducible, meaning that there exists a constant $R>0$ such that for any $z\in K\cap \Lambda$, $\vert z\vert\geq R$, there exists a path with positive probability in $K\cap B(z,R)$ which starts in $z$ and ends at $z+K$.
\end{enumerate}
 There are several simple situations where the latter condition is satisfied. In particular, this is the case when $\pr(X\in -K)>0$. If $K$ is $\mathcal{C}^{2}$, the condition \ref{H:strongly_irreducible} is superfluous. We shall also assume a moment condition on the increments which is slightly stronger than \cite{DeWa-15}:
\begin{enumerate}[label=($H\arabic{*}$),ref=($H\arabic{*}$)]
\setcounter{enumi}{6}
     \item\label{H:moments}$\e[\vert X\vert^{p+2}]<\infty$.   
\end{enumerate} 
This moment condition can be explained by the current limitation on the estimates for the moment of the exit time for the random walk starting away from the origin.

However, we do not require the existence of a bigger cone $K'$ with $\partial K\setminus\lbrace 0\rbrace\subset \text{int}(K')$ and such that the r\'eduite $u$ can be extended to a harmonic function on $K'$. This condition, which was necessary in \cite{DeWa-15}, is removed in \cite{DeWa-19}.

\subsubsection*{Harmonic functions and reverse random walk}

A  function $h:{K}\to {\bf R}$ is said to be (discrete) harmonic with respect to ${K}$ and $\{S(n)\}$ if for every $x\in{K}$ and $n\geq 1$,
\begin{equation*}
     h(x)=\e[h(x+S(n));\tau_{x}>n],
\end{equation*}
with $\tau_x$ defined in \eqref{eq:exit_time_def}.
Remark that the above identity for $n=1$ implies all the other relations for $n\geq 2$. In the sequel, a harmonic function with respect to ${K}$ and $\{S(n)\}$ will be simply called a harmonic function.

Denisov and Wachtel proved \cite[Thm~1]{DeWa-15} the existence of a positive harmonic function $V:{K}\to [0,\infty)$ defined by 
\begin{equation}\label{definition_V(x)}
     V(x)=\lim\limits_{n\rightarrow \infty}\e[u(x+S(n));\tau_{x}>n].
\end{equation}
This harmonic function is of central importance in the present paper, and it will ultimately be identified in \cite{DuRaTaWa-20} with the Martin boundary of the random walk in ${K}$.

\subsubsection*{Local limit theorem at the fluctuation scale}
The main result of the paper is a uniform extension of the local limit theorem \cite[Thm~6]{DeWa-15}. This extension can be made more or less explicit depending on the proximity with the boundary of the cone with respect to the fluctuation scale. Let $\epsilon>0$ be a small parameter. Following \cite{DeWa-15}, we set 
\begin{equation}
\label{eq:def_K-n-epsilon}
     {K}_{n,\epsilon}=\lbrace x\in{K}: \dist(x,\partial {K})\geq n^{1/2-\epsilon}\rbrace.
\end{equation}
Since the typical fluctuations of the random walk at time $n$ are expected to be of order $\sqrt{n}$, ${K}_{n,\epsilon}$ represents the set of points of the cone whose distance to the boundary at time $n$ is more than $n^{-\epsilon}$ times the typical fluctuations. 

For $x\in{K}$ and $n\geq 1$, set
\begin{equation}
\label{eq:def_t_x_n}
     t_{x,\epsilon}(n)=\inf \lbrace m\geq 0: x+S(m)\in{K}_{n,\epsilon}\rbrace.
\end{equation}
Then $t_{x,\epsilon}(n)$ is a stopping time with respect to $\{S(m)\}$, which gives the first time at which the random walk started at $x$ is far from the boundary compared to the order of fluctuation $\sqrt{n}$; we denote then by $x_{\epsilon}(n)$ the position $x+S(t_{x,\epsilon}(n))$. Define similarly $t'_{x,\epsilon}(n)$ and $x'_{\epsilon}(n)$ for the reverse random walk.
\begin{theorem}
\label{thm:asymp_probab_boundary}
Assume \ref{H:lattice}--\ref{H:strongly_irreducible} and assume moreover that $\e[\vert X\vert^{r}]<\infty$ for some $r>p+2$. Then there exists a constant $\kappa$ which depends only on $K$ such that for any fixed $x\in{K}$, as $n\to\infty$,
\begin{multline*}
     \pr(x+S(n)=y,\tau_{x}>n)\sim\\\kappa\cdot V(x)\cdot n^{-p/2-d/2}\cdot \e[u(y'_{\epsilon}(n)/\sqrt{n});t'_{y,\epsilon}(n)\leq \tau'_{y}]\cdot \exp\left(-\frac{\vert y\vert^{2}}{2n}\right),
\end{multline*}
uniformly on $y\in {K}$ such that $\vert y\vert \leq A\sqrt{n}$ with $A>0$.
\end{theorem}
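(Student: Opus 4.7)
The plan is to reduce the statement to an \emph{interior} local limit theorem (for endpoints in $K_{n,\epsilon}$) by combining time reversal with the strong Markov property at $t'_{y,\epsilon}(n)$. Heuristically, a walk from $x$ that ends at $y\in K$ near the boundary must have been driven away from $\partial K$; reading time backwards, the reverse walk starts at $y$, leaves the boundary region by time $t'_{y,\epsilon}(n)$, and then moves through the bulk of the cone where Denisov-Wachtel-type estimates apply. The boundary-proximity effect is captured entirely by the expectation $\e[u(y'_\epsilon(n)/\sqrt n);t'_{y,\epsilon}(n)\leq\tau'_y]$, while the bulk portion produces $V(x)$, the power $n^{-p/2-d/2}$ (after extracting $u$ at the fluctuation scale) and the Gaussian factor $e^{-|y|^2/(2n)}$.

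Concretely, the reversal identity $\pr(x+S(n)=y,\tau_x>n)=\pr(y+S'(n)=x,\tau'_y>n)$ combined with the strong Markov property of $S'$ at $t'_{y,\epsilon}(n)$ gives the decomposition
\begin{equation*}
\pr(x+S(n)=y,\tau_x>n)=\sum_{m=0}^{n}\sum_{z\in K_{n,\epsilon}\cap\Lambda}\pr(t'_{y,\epsilon}(n)=m,y'_\epsilon(n)=z,m\leq\tau'_y)\cdot\pr(x+S(n-m)=z,\tau_x>n-m)+\mathcal R,
\end{equation*}
where the inner factor has been rewritten using the reversal identity once more and the remainder $\mathcal R$ collects the event $\{t'_{y,\epsilon}(n)>n\}$. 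The next step is to invoke a uniform interior local limit theorem, to be established in Section~\ref{sec:mesoscopic} from the Brownian coupling of \cite{DeWa-15,GoZa-09} and the Gaussian heat kernel estimates of Section~\ref{sec:preliminary_results}: uniformly in $z\in K_{n,\epsilon}$ with $|z|\leq C\sqrt n$ and $m\leq n^{1-\delta}$,
$$\pr(x+S(n-m)=z,\tau_x>n-m)\sim\kappa\,V(x)\,u(z)\,(n-m)^{-p-d/2}\exp\bigl(-\tfrac{|z|^2}{2(n-m)}\bigr).$$
Using the homogeneity $u(z)=n^{p/2}u(z/\sqrt n)$ together with $(n-m)^{-p-d/2}\sim n^{-p-d/2}$ and $e^{-|z|^2/(2(n-m))}=(1+o(1))\,e^{-|y|^2/(2n)}$ (the latter being valid once $m=o(n)$ and $|z-y|=O(\sqrt m)$), the inner factor factorizes as $\kappa V(x)n^{-p/2-d/2}u(z/\sqrt n)e^{-|y|^2/(2n)}(1+o(1))$. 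Summing over $(m,z)$ then recognizes the target expectation.

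I expect the uniform interior local limit theorem to be the main obstacle: the pointwise asymptotic \eqref{eq:LLT_DeWa} of \cite{DeWa-15} must be sharpened to be uniform in $z\in K_{n,\epsilon}$ with $|z|/\sqrt n$ bounded, and it must carry the explicit Gaussian decay $e^{-|z|^2/(2(n-m))}$ at the endpoint. This requires strong approximation of the walk by Brownian motion together with sharp two-sided Gaussian heat kernel bounds for Brownian motion killed on leaving $K$, with careful tracking of error terms at the fluctuation scale $\sqrt n$. Secondary technical points are: bounding $\pr(t'_{y,\epsilon}(n)>n^{1-\delta})$ and the remainder $\mathcal R$ so that they are negligible compared to the main term, which uses the strengthened moment hypothesis \ref{H:moments} and the Fuk-Nagaev bounds of Appendix~\ref{Fuk-Nagaev-cone}; and justifying the passage to the limit under the sum through a uniform-integrability argument for $u(y'_\epsilon(n)/\sqrt n)$, relying on the homogeneity of $u$ and its continuity up to $\partial K$.
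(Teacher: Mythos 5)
Your overall strategy matches the paper's: time reversal, strong Markov property at $t'_{y,\epsilon}(n)$, a uniform interior local limit theorem (the paper's Proposition~\ref{specificLLT}), and Fuk--Nagaev-type error bounds. However, there is a genuine gap: to conclude that the remainders are ``negligible compared to the main term'' you need a \emph{lower} bound on the main term, and this is nontrivial because the starting point $y$ of the reversed walk lies arbitrarily close to $\partial K$, where neither $u$ nor $V'$ is bounded below. The Fuk--Nagaev estimates give only upper bounds $O(n^{-(r-2)/2-p/2-d/2})$ on the discarded events; these are useful only if one can show
\[
\pr\bigl(y+S'(n)=x,\;t'_{y,\epsilon}(n)\leq n^{1-\epsilon},\;\tau'_{y}>n,\;\vert S'(t'_{y,\epsilon}(n))\vert\leq n^{1/2-\epsilon/8}\bigr)\geq c\,n^{-p-d/2+O(\epsilon)},
\]
uniformly in $y$, and then choose $\epsilon$ small enough (exploiting $r>p+2$). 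That lower bound is not a by-product of the interior LLT or of uniform integrability of $u(y'_{\epsilon}(n)/\sqrt n)$; it requires a separate uniform lower bound $\pr(\tau'_{z}>n)\geq c\,n^{-p/2}$ valid for all $z\in K\cap\Lambda$ with $\vert z\vert$ large, however close to $\partial K$. This is exactly where hypothesis~\ref{H:strongly_irreducible} (asymptotic strong irreducibility of the reversed walk) enters: one first gets the bound for points at ``vertical'' distance $\geq t$ from $\partial K$ via convexity and a translation argument, and then propagates it to arbitrary boundary points by concatenating $O(1)$ positive-probability paths that drive the walk into the interior (Lemma~\ref{lowerBoundProbSurv} in the paper). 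Your proposal never invokes~\ref{H:strongly_irreducible} and never addresses why the target asymptotic is bounded away from zero at the correct rate; without that, the ``negligibility'' of the error terms is unproved and the equivalence could be vacuous.

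A smaller point: your constraint ``$\vert z-y\vert=O(\sqrt m)$'' used to match the Gaussian factors is not deterministically true for $z=y'_{\epsilon}(n)$. What one actually restricts to is the event $\{\vert S'(t'_{y,\epsilon}(n))\vert\leq n^{1/2-\epsilon/8}\}$, and the complementary event is absorbed into the remainder via Lemma~\ref{Lemma_24_revisited}. Apart from this imprecision and the missing lower-bound step above, the decomposition and the reliance on the uniform interior LLT are correctly identified as the core of the argument.
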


Theorem \ref{thm:asymp_probab_boundary} implies the local limit theorem of \cite{DeWa-15} under the stronger assumption that $\e[\vert X\vert^{r}]<\infty$ for some $r>p+2$. Indeed, for a fixed value of $y$, \cite[Thm~1]{DeWa-15} yields
\begin{equation*}
     n^{p/2}\cdot\e[u(y'_{\epsilon}(n)/\sqrt{n});t'_{y,\epsilon}(n)\leq \tau'_{y}]\cdot\exp\left(-\frac{\vert y\vert^{2}}{2n}\right)\xrightarrow[n\rightarrow \infty]{} V'(y).
\end{equation*}
Theorem \ref{thm:asymp_probab_boundary} admits a nice simplification when the endpoint of the random walk is located in the domain ${K}_{n,\epsilon}$ in \eqref{eq:def_K-n-epsilon}.

\begin{corollary}\label{cor:asymp_probab_boundary}
Assume \ref{H:lattice}--\ref{H:moments}. Then for any fixed $x\in{K}$, as $n\to\infty$,
\begin{equation*}
     \pr(x+S(n)=y,\tau_{x}>n)\sim\kappa _{0}\cdot V(x)\cdot n^{-p-d/2}\cdot u(y)\cdot \exp\left(-\frac{\vert y\vert^{2}}{2n}\right),
\end{equation*}
uniformly on $y\in {K}_{n,\epsilon}$ with $\vert y\vert \leq A\sqrt{n}$, $A>0$.
\end{corollary}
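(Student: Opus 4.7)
The plan is to deduce the corollary as a direct specialization of Theorem \ref{thm:asymp_probab_boundary} when the endpoint lies in the interior region $K_{n,\epsilon}$. The key point is that, in this regime, the reverse-walk expectation appearing in Theorem \ref{thm:asymp_probab_boundary} collapses to a closed form, thanks to the homogeneity of the réduite $u$.

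First I would observe that for $y\in K_{n,\epsilon}$ the stopping time $t'_{y,\epsilon}(n)$ defined in \eqref{eq:def_t_x_n} vanishes identically, because $y+S'(0)=y\in K_{n,\epsilon}$ already. Hence $y'_{\epsilon}(n)=y$, and since $\tau'_{y}\geq 1$ the event $\{t'_{y,\epsilon}(n)\leq\tau'_{y}\}$ holds almost surely. Consequently
\begin{equation*}
     \e[u(y'_{\epsilon}(n)/\sqrt{n});t'_{y,\epsilon}(n)\leq \tau'_{y}]=u(y/\sqrt{n})=n^{-p/2}\,u(y),
\end{equation*}
where the last equality uses the homogeneity relation \eqref{eq:homogeneity_reduite}. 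Substituting this back into Theorem \ref{thm:asymp_probab_boundary} and setting $\kappa_{0}=\kappa$ yields exactly the asymptotic formula of the corollary, and the required uniformity on $y\in K_{n,\epsilon}$ with $|y|\leq A\sqrt{n}$ is inherited from the uniformity in Theorem \ref{thm:asymp_probab_boundary} over $y\in K$ with $|y|\leq A\sqrt{n}$, since $K_{n,\epsilon}\subset K$.

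The main subtlety I anticipate concerns the moment hypothesis: Theorem \ref{thm:asymp_probab_boundary} requires $\e[|X|^{r}]<\infty$ for some $r>p+2$ strictly, while the corollary assumes only \ref{H:moments}, namely $\e[|X|^{p+2}]<\infty$. To justify invoking the theorem under \ref{H:moments} alone, I would revisit the relevant steps of the proof of Theorem \ref{thm:asymp_probab_boundary} restricted to the interior regime $y\in K_{n,\epsilon}$: the extra $\varepsilon$ of moment in the theorem is imposed by the behavior of the reverse walk when $y$ approaches $\partial K$, and since here $y$ is held at distance at least $n^{1/2-\epsilon}$ from $\partial K$, the associated localization estimates and the Fuk--Nagaev inequalities of Appendix \ref{Fuk-Nagaev-cone} should require strictly fewer moments. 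The argument should then go through with the borderline value $p+2$; this verification, though essentially routine, is the only step I would not treat as automatic.
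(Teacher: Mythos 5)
Your algebraic reduction is correct: for $y\in K_{n,\epsilon}$ the stopping time $t'_{y,\epsilon}(n)$ of \eqref{eq:def_t_x_n} equals $0$, so $y'_{\epsilon}(n)=y$ and the event $\{t'_{y,\epsilon}(n)\leq\tau'_{y}\}$ is sure; by the homogeneity \eqref{eq:homogeneity_reduite} the expectation collapses to $n^{-p/2}u(y)$, the exponents combine to $n^{-p-d/2}$ with $\kappa_{0}=\kappa$, and the uniformity on $K_{n,\epsilon}\cap\{|y|\le A\sqrt n\}$ is inherited. You also correctly flag the moment mismatch ($r>p+2$ strictly in Theorem \ref{thm:asymp_probab_boundary} versus $p+2$ in \ref{H:moments}).

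The problem is that your proposed resolution of the moment issue is circular. If you restrict the proof of Theorem \ref{thm:asymp_probab_boundary} to $y\in K_{n,\epsilon}$, as you suggest, the decomposition \eqref{eq:term1_3}--\eqref{eq:term3_3} trivializes: the events in \eqref{eq:term1_3} and \eqref{eq:term2_3} are empty (because $t'_{y,\epsilon}(n)=0$ and $S'(t'_{y,\epsilon}(n))=0$), and the surviving term \eqref{eq:term3_3} is treated in the paper precisely by invoking Proposition \ref{specificLLT} together with the strong Markov property. But Proposition \ref{specificLLT} \emph{is} the statement of the corollary (recall $K_{n,\epsilon}^{A}=\{y\in K_{n,\epsilon}:|y|\le A\sqrt n\}$). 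So there is no smaller-moment version of the theorem's argument to ``revisit'': the interior case is a prerequisite for the theorem, not a consequence of it. The paper's actual route is the reverse of your plan: Proposition \ref{specificLLT} is established independently in Section \ref{sec:mesoscopic} under \ref{H:moments} alone, by a Markov split of the walk at time $\lfloor n/2\rfloor$ combined with the uniform conditioned local limit theorem of Proposition \ref{generalLLT}, the a priori bound \eqref{bound_local_probability_in_cone}, the heat-kernel asymptotics \eqref{asymptotics_K_zero}, and the uniform exit-time and H\"older estimates of Section \ref{Section:Local_limit}; the corollary is then a restatement of this proposition. The stronger moments $r>p+2$ and the irreducibility \ref{H:strongly_irreducible} only enter afterwards, in the proof of Theorem \ref{thm:asymp_probab_boundary}, to control the boundary contribution \eqref{eq:term2_3} via the Fuk--Nagaev estimate \eqref{small_y'} and to apply Lemma \ref{lowerBoundProbSurv}. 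The step you defer as ``essentially routine'' is in fact the entire content of the corollary's proof.
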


We shall prove Theorem \ref{thm:asymp_probab_boundary} by following the same pattern as that of \cite[Thm~6]{DeWa-15}: all intermediate steps of our proof are improved versions of the corresponding intermediate steps in the proof of \cite[Thm~6]{DeWa-15}. In \cite{DeWa-15}, these steps mainly consist in a coupling of the random walk with a standard Brownian motion, and then of estimates of the survival probability of the random walk started at a given $x\in{K}$, or of the transition probability between two fixed points $x,y\in{K}$, when $n$ goes to infinity. The main novelty in the proof of Theorem \ref{thm:asymp_probab_boundary} is to allow $x$ and $y$ to change with $n$. To that purpose, we use Gaussian estimates on the heat kernel on ${K}$. We will mainly use results from \cite[Sec.~5 and Sec.~6.3]{GySal-11}. See also \cite{Va-99,BSMuSi-14} for complementary approaches on the subject.

\subsubsection*{Further directions}
Our results may be improved in two directions. First, as we mentioned below \ref{H:moments}, we should expect an optimal moment condition of the form $\e[\vert X\vert^{p}]<\infty$ (with $p$ instead of $p+2$). Our stronger assumption actually comes from the estimate \eqref{exit_time}, which does not take into account the distance to the boundary of the cone. In order to improve this, one would need a better understanding of the behavior of the exit time away from the origin. 

A better understanding of the function $\e[u(y'_{\epsilon}(n)/\sqrt{n});t'_{y,\epsilon}(n)\leq \tau'_{y}]$ appearing in Theorem \ref{thm:asymp_probab_boundary} would be the second improvement. For $y$ fixed and $n$ going to infinity, the latter function converges to $V'(y)$. An important problem is to get a uniform convergence to the harmonic function $V'$; such a uniform convergence exists when $K$ is the half-space, as it is shown in \cite{DuRaTaWa-20}.

\section{Preliminary results}
\label{sec:preliminary_results}

In this section, we give a brief review on Gaussian estimates for the heat kernel in a cone and on the coupling approach of \cite{DeWa-15}.

\subsection{Heat kernel in a cone and Gaussian estimates.}\label{gaussian_estimates}
For $t>0$, let us denote by $K_{t}:{K}\times {K}\to [0,\infty)$ the heat kernel on ${K}$ at time $t$, with Dirichlet boundary conditions. Namely, $K_{t}$ is the solution of the equation
\begin{equation*}
     \left\lbrace \begin{array}{ll}
\partial_{t}K_{t}(x,\cdot)+\Delta K_{t}(x,\cdot)=0,& x\in{K},\smallskip\\
\lim\limits_{t\rightarrow 0}K_{t}(x,\cdot)=\delta_{x},& x\in{K},\\
K_{t}(x,y)=0,& x\in{K},\,  y\in\partial {K},
\end{array}\right.
\end{equation*}
where the limit on the second line is understood in the distributional sense. The kernel $K_{t}$ is symmetric in $x$ and $y$.

We denote by $k_{t}:{K}\to [0,1]$ the survival probability of a standard Brownian motion at time $t$. If $\tau^{\bm}_{x}$ denotes the survival time of a standard Brownian motion starting at $x$, i.e., $\tau_{x}^{\bm}=\inf\{t\geq 0: x+B_{t}\not\in K\}$, then 
\begin{equation*}
     k_{t}(x)=\pr(\tau_{x}^{\bm}\geq t). 
\end{equation*}
It satisfies the formula
\begin{equation*}
     k_{t}(x)=\int_{{K}}K_{t}(x,y)dy,\end{equation*} 
see \cite[Eq.~(5.7)]{GySal-11}. The functions $K_{t}$ and $k_{t}$ are homogeneous in time, in the sense that 
\begin{equation*}
     k_{t}(x)=k_{1}(x/\sqrt{t})\quad\text{and}\quad K_{t}(x,y)=t^{-d/2}K_{1}(x/\sqrt{t},y/\sqrt{t})
\end{equation*}
for all $x,y\in {K}$ and $t>0$. We shall write $\bar{K}$ for $K_{1}$ and $\bar{k}$ for $k_{1}$.
 
Let $u$ denote the r\'eduite of the convex cone ${K}$, whose formula is given by
\begin{equation*}u(x)=\vert x\vert^{p}m\left(\frac{x}{\vert x\vert}\right),\end{equation*}
with $p\geq 1$ and $m: \Sigma\to {\bf R}$ is a function which is $\mathcal C^{2}$ in the interior of $\Sigma$ and such that $m_{\partial \Sigma}=0$. Note that the gradient of $u$ is locally bounded (see Lemma \ref{bound_gradient_u}). 

The first important Gaussian estimates give upper and lower bounds of $k_{t}$ in terms of the r\'eduite $u$. For $x\in{K}$ and $t>0$, let $x_{t}$ denote an arbitrary point of ${K}$ such that $\vert x_{t}-x\vert\leq t$ and $\dist(x_{t},\partial {K})\geq c_{0}t$, for some constant $c_{0}$ independent of $x$ and $t$ (we take the same notation as in \cite[Eq.~(4.29)]{GySal-11}). In all results involving elements of type $x_{t}$, the constants given will not depend on the particular choice of $x_{t}$. Moreover, since $\dist(x_{t},\partial {K})\geq c_{0}t$ by definition, \cite[Lem.~19]{DeWa-15} yields that from some constant $c>0$,
\begin{equation}
\label{lower_bound_y_1}
     u(x_{t})\geq ct^{p}.
\end{equation}

\begin{theorem}[Thm~5.14 of \cite{GySal-11}]
\label{estimatek}
There exist positive constants $c_{1}$ and $C_{1}$ such that for all $x\in{K}$ and $t>0$,
\begin{equation*}c_{1}\frac{u(x)}{u(x_{\sqrt{t}})}\leq k_{t}(x)\leq C_{1}\frac{u(x)}{u(x_{\sqrt{t}})}.\end{equation*}
\end{theorem}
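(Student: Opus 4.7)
The approach is to reduce to the case $t=1$ by scaling and then to compare $k_1$ with the réduite $u$ separately in the interior and near the boundary of the cone. For the scaling step, using $k_t(x)=k_1(x/\sqrt t)$ and the $p$-homogeneity of $u$, set $y=x/\sqrt t$ and $y_1=x_{\sqrt t}/\sqrt t$: one checks $|y_1-y|\leq 1$, $\dist(y_1,\partial K)\geq c_0$, and $u(x)/u(x_{\sqrt t})=u(y)/u(y_1)$, so it suffices to prove $c_1\,u(y)/u(y_1)\le k_1(y)\le C_1\,u(y)/u(y_1)$ for $y\in K$. When $\dist(y,\partial K)\geq c_0$ one may take $y_1=y$ and the bound reduces to $k_1(y)\asymp 1$; the upper bound is trivial and the lower bound comes from the fact that a standard Brownian motion started at $y$ stays inside $B(y,c_0)\subset K$ up to time $1$ with probability bounded below by a constant depending only on $c_0$ and $d$.

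The main case is $\dist(y,\partial K)<c_0$. Here both $k_1$ and $u$ are positive on $K$, vanish on $\partial K$, and satisfy linear equations on $K$: $u$ is harmonic for the Laplacian, while $k_1$ is the time-$1$ slice of a caloric function. The parabolic boundary Harnack principle applied to a suitable space-time region containing both $y$ and the safe reference point $y_1$ yields the comparison $k_1(y)/u(y)\asymp k_1(y_1)/u(y_1)$, and combining this with the interior estimate $k_1(y_1)\asymp 1$ gives the required two-sided bound.

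The main technical obstacle is uniformity of the comparison constants across all scales $|y|$: the cone is unbounded and has a non-smooth apex at the origin, so one cannot simply invoke a classical boundary Harnack estimate on a fixed bounded domain. The way around this is to exploit the dilation invariance $\lambda K=K$ of the cone, which transfers a single boundary Harnack estimate on a fixed model region to every scale. Combined with the Faber--Krahn type inequalities developed in \cite[Sec.~5]{GySal-11}, this produces scale-invariant constants $c_1$ and $C_1$ depending only on the geometry of $\Sigma$.
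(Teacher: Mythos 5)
This theorem is quoted from \cite[Thm~5.14]{GySal-11}; the paper itself gives no proof, so there is nothing internal to compare against. Your reconstruction follows a different route from the cited source: Gyrya--Saloff-Coste derive such survival estimates by conjugating the Dirichlet Laplacian via the Doob $h$-transform by the r\'eduite $u$, reducing to a Neumann problem on the weighted space $(K,u^2dx)$ where volume doubling and Poincar\'e give a parabolic Harnack inequality and hence two-sided heat kernel bounds; the estimate for $k_t$ then comes out by integration. Your approach instead invokes a parabolic boundary Harnack principle directly. That is a legitimate classical alternative for Lipschitz (in particular convex) cones, and the scaling reduction to $t=1$ and the interior case are both correct (one also needs the small observation that any two admissible reference points $y_1,y_1'$ satisfy $u(y_1)\asymp u(y_1')$, which follows from interior Harnack plus the concavity of $z\mapsto\dist(z,\partial K)$ on the convex set $K$, so the choice of $y_1$ does not matter).

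Two steps of the boundary case are glossed over. First, the parabolic BHP compares caloric functions at space-time points with a built-in time lag; when you apply it to $(s,z)\mapsto k_s(z)$ against the time-constant caloric function $u$, it gives $k_1(y)/u(y)\asymp k_{s_0}(y_1)/u(y_1)$ for some $s_0<1$ bounded below, not immediately $k_1(y_1)$, and you need a further (easy, but necessary) Harnack/monotonicity argument to replace $k_{s_0}(y_1)$ by $k_1(y_1)$. Second, your proposed fix for uniformity of constants via the dilation $\lambda K=K$ is problematic as stated: dilating a neighborhood of $(y,1)$ by $\lambda=1/\vert y\vert$ rescales the time variable to $1/\vert y\vert^2$, so a single BHP ``on a fixed model region'' does not transfer cleanly back to the fixed time slice $t=1$. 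The cleaner observation is that a convex cone is \emph{uniformly} Lipschitz, including at the apex, with Lipschitz character determined by $\Sigma$, so the parabolic BHP constant at the fixed spatial/time scale $r=1$ is already uniform over all boundary points; no dilation trick (or appeal to Faber--Krahn) is needed. With these two points repaired, your proof is a valid, arguably more elementary, alternative to the $h$-transform machinery of \cite{GySal-11}.
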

We will also use an equivalent version of the latter theorem, which relies on the properties of the harmonic function $u$. Indeed, by \cite[4.20]{GySal-11}, there exist constants $c$ and $C$ such that 
\begin{equation*}cu(x_{t})^{2}\leq\frac{\int_{B(x,t)\cap {K}}u(s)^{2}ds}{V(x,t)}\leq Cu(x_{t})^{2},\end{equation*}
where $V(x,t)$ denotes the volume of $B(x,t)\cap{K}$. This yields the alternative estimate
\begin{equation}\label{estimateWithVolume}
c_{1}'\sqrt{\frac{V(x,\sqrt{t})}{\int_{B(x,\sqrt{t})\cap {K}}u(s)^{2}ds}}u(x)\leq k_{t}(x)\leq C_{1}'\sqrt{\frac{V(x,\sqrt{t})}{\int_{B(x,\sqrt{t})\cap {K}}u(s)^{2}ds}}u(x).
\end{equation}

The second estimates concern the heat kernel itself. 
\begin{theorem}[Thm~5.11 and Thm~5.15 in \cite{GySal-11}]\label{estimateK}
There exist positive constants $c_{2},c_{3},C_{2},C_{3},C_{4}$ and $0<\alpha<1$ such that
\begin{equation*}c_{2}\frac{k_{t}(x)k_{t}(y)}{\sqrt{V(x,\sqrt{t})V(y,\sqrt{t})}}\exp\left(-\frac{\vert x-y\vert^{2}}{C_{3}t}\right)\leq K_{t}(x,y)\leq C_{2}\frac{k_{t}(x)k_{t}(y)}{\sqrt{V(x,\sqrt{t})V(y,\sqrt{t})}}\exp\left(-\frac{\vert x-y\vert^{2}}{c_{3}t}\right)
\end{equation*}
for all $x,y\in{K}$, and
\begin{equation*}\left\vert \frac{K_{t}(x,y)}{u(y)}-\frac{K_{t}(x,y')}{u(y')}\right\vert\leq C_{4}\left(\frac{\vert y-y'\vert}{\sqrt{t}}\right)^{\alpha}\frac{K_{2t}(x,y)}{u(y)},\end{equation*}
for all $y,y'\in{K}$ such that $\vert y-y'\vert \leq \sqrt{t}$.

Moreover, there exist $\beta>0$ and $C_{5}>0$ such that
\begin{equation*}\vert \partial_{t} K_{t}(x,y)\vert\leq C_{5}\frac{k_{t}(x)k_{t}(y)}{t\sqrt{V(x,\sqrt{t})V(y,\sqrt{t})}}\left(1+\frac{\vert x-y\vert^{2}}{t}\right)^{\beta+1}\exp\left(-\frac{\vert x-y\vert^{2}}{4t}\right),\end{equation*}
for all $t>0$ and $x,y\in{K}$.
\end{theorem}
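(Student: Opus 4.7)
The plan is to follow the strategy of Gyrya--Saloff-Coste for Dirichlet heat kernels on inner uniform domains, specialized to the convex cone $K$. The approach rests on checking that (i) $K$ equipped with the internal (geodesic) metric is a volume-doubling inner uniform domain, and (ii) the profile $k_t$ satisfies the appropriate global comparability to a fixed positive harmonic profile (namely $u$). Once these two facts are in hand, the three assertions of the theorem become three applications of a now-standard Dirichlet form machinery, and I would organize the work in four stages.

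\textbf{Stage 1: doubling, Poincar\'e and on-diagonal estimate.} First I would verify volume doubling for the balls $B(x,r)\cap K$ with respect to the intrinsic distance of $K$, and a weighted $L^{2}$ Poincar\'e inequality on such balls with weight $u^{2}$, both of which follow from the convexity of $K$ and the homogeneity of $u$. Moser iteration applied to the $h$-transformed Dirichlet form (with $h=u$) then produces the standard near-diagonal estimate
$$
\frac{K_{t}(x,y)}{u(x)u(y)}\;\asymp\;\frac{1}{\bigl(\int_{B(x,\sqrt{t})\cap K}u^{2}\bigr)^{1/2}\bigl(\int_{B(y,\sqrt{t})\cap K}u^{2}\bigr)^{1/2}}
$$
whenever $|x-y|\le \sqrt{t}$. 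Combining this with the volume-based version \eqref{estimateWithVolume} of Theorem \ref{estimatek} rewrites the right-hand side in the form $k_{t}(x)k_{t}(y)/\sqrt{V(x,\sqrt{t})V(y,\sqrt{t})}$, which is the near-diagonal content of the first estimate.

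\textbf{Stage 2: Gaussian off-diagonal upper bound.} For $|x-y|\gg\sqrt{t}$ I would apply Davies' perturbation argument: conjugating the semigroup by $e^{\psi}$ for a Lipschitz function $\psi$, one obtains an $L^{2}\!\to\!L^{2}$ bound for the perturbed semigroup controlled by $e^{t\|\nabla\psi\|_{\infty}^{2}/2}$. Taking $\psi$ proportional to the intrinsic distance from $x$ and optimizing yields an extra factor $\exp(-|x-y|^{2}/c_{3}t)$ multiplying the on-diagonal bound, which is the upper estimate. The lower bound is obtained by the chaining argument of Aronson: cover a quasi-geodesic from $x$ to $y$ by $N\asymp |x-y|^{2}/t$ balls of radius $\sqrt{t/N}$ that stay in the interior of $K$, iterate the near-diagonal lower bound using semigroup composition $K_{t}(x,y)=\int K_{t/N}(x,z_{1})\cdots K_{t/N}(z_{N-1},y)dz_{1}\cdots dz_{N-1}$, and absorb the combinatorial losses into the Gaussian exponent with a larger constant $C_{3}$.

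\textbf{Stage 3: H\"older regularity of $K_t(x,\cdot)/u(\cdot)$.} This is the most delicate point. The function $(t,y)\mapsto K_{t}(x,y)/u(y)$ is caloric for the $u$-transformed operator, and the ratio vanishes on $\partial K$ in a trivial sense because both numerator and denominator do. The boundary Harnack principle for inner uniform domains (as developed in Chapter 5 of \cite{GySal-11}) says precisely that quotients of positive harmonic functions vanishing on the boundary are locally H\"older continuous up to the boundary, with a scale-invariant constant. Applied in the parabolic setting at scale $\sqrt{t}$ around $y$, together with a standard time-doubling trick to pass from $K_t$ in the denominator of the right-hand side to $K_{2t}$, this yields the H\"older estimate with exponent $\alpha$ depending only on the geometry of $K$.

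\textbf{Stage 4: time derivative.} For the last assertion I would use the identity
$$
\partial_{t}K_{t}(x,y)=\frac{1}{t}\bigl[K_{t}\ast L\bigr](x,y)
$$
combined with analyticity of the semigroup on $L^{2}$, or equivalently Cauchy's formula on a disk of radius $ct$ in the complex time plane, where the Gaussian upper bound from Stage 2 already controls $|K_{z}(x,y)|$. This converts the Gaussian bound into one for $|\partial_{t}K_{t}|$ with a loss of $1/t$ and the polynomial factor $(1+|x-y|^{2}/t)^{\beta+1}$ coming from the absorption of the shrinking Gaussian constant from $c_{3}$ to $4$.

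The main obstacle will be Stage 3: controlling the boundary behavior uniformly in $x$ requires the full inner uniform geometry of $K$ and cannot be reduced to interior Harnack, since the points $y,y'$ may lie arbitrarily close to $\partial K$. Stages 1, 2 and 4 are then essentially bookkeeping once the doubling/Poincar\'e/inner-uniformity package is secured.
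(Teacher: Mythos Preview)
The paper does not give its own proof of this statement: Theorem~\ref{estimateK} is stated as a direct quotation of \cite[Thm~5.11 and Thm~5.15]{GySal-11}, with the remark that ``the above result is actually much more general, since by \cite{GySal-11} it holds for every inner uniform domain.'' There is therefore no in-paper argument to compare against; the authors simply invoke the Gyrya--Saloff-Coste result as a black box.

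Your proposal is a reasonable outline of how \cite{GySal-11} actually establishes these estimates (Doob $h$-transform with $h=u$, volume doubling and Poincar\'e for the weighted measure $u^2\,dx$, Moser iteration for the on-diagonal bound, Davies' method and Aronson chaining for the Gaussian off-diagonal bounds, parabolic boundary Harnack for the H\"older estimate, and semigroup analyticity for the time derivative). That is precisely the machinery the paper is appealing to, so in spirit you are reproducing the referenced proof rather than offering an alternative. One minor point: in Stage~4 your displayed identity $\partial_t K_t=(1/t)[K_t\ast L]$ is not literally correct (the factor $1/t$ does not belong there; $\partial_t K_t=\Delta K_t$), but the actual argument you describe afterwards---Cauchy's formula on a disk of radius $ct$ in complex time using the Gaussian upper bound---is the standard and correct route, and does produce the $1/t$ loss and the polynomial correction when passing from the constant $c_3$ in the exponent to $4$.
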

The above result is actually much more general, since by \cite{GySal-11} it holds for every inner uniform domain. The above estimates can actually be simplified by the following inequality
\begin{equation}\label{lower_bound_ball_C}
     \inf_{z\in{K}}V(z,\sqrt{t})\geq ct^{d/2},
\end{equation}
for some constant $c$ independent of $t>0$ (see Lemma \ref{proof_lower_bound_ball_C} for a proof of this inequality). 

We can deduce from the above Gaussian estimates several other estimates on the heat kernel, which will be needed to prove our Theorem~\ref{thm:asymp_probab_boundary}. All results concerning these heat kernel estimates are proven in Appendix \ref{appendix_heat_estimates}.

\subsection{The coupling approach of Denisov and Wachtel}\label{description_proofs_DW}

All the results of this subsection are borrowed from \cite{DeWa-15}, and the interested reader should refer to \cite{DeWa-15} for the proofs.  The local limit theorems obtained by Denisov and Wachtel in \cite[Thm 5 and Thm 6]{DeWa-15} rely on a coupling of the random walk with Brownian motion \cite[Lem.~17]{DeWa-15}, based on an important work of G\"otze and Zaitsev \cite[Thm 4]{GoZa-09}. Namely, suppose that $\e[\vert X\vert^{2+\delta}]<\infty$ for some $0<\delta<1$. Then, one can define on the same probability space a random walk with the same distribution as $S(n)$ and a Brownian motion $B(t)$ such that, for any $\gamma$ satisfying $0<\gamma<\frac{\delta}{2(2+\delta)}$,
\begin{equation}\label{coupling_few_moments}
     \pr(\sup_{u\leq n}\vert B(u)-S(\lfloor u\rfloor)\vert \geq n^{1/2-\gamma})\leq Cn^{2\gamma+\gamma\delta-\delta/2},
\end{equation}
for some constant $C>0$. This coupling is particularly useful in the case of random walks in cones, as the distribution of Brownian motion has an explicit expression in this situation \cite{De-87,BaSm-97}. 

For example, the value of the kernel $K_{t}(x,\cdot)$ satisfies the uniform asymptotic formula \cite[Lem.~18]{DeWa-15}
\begin{equation}\label{heat_kernel_small_x}
K_{t}(x,y)\underset{\substack{\vert x\vert\leq \theta_{t}\sqrt{t},\\\vert y\vert \leq \sqrt{t/\theta_{t}}}}{\sim} \chi_{0}t^{-d/2-p}u(x)u(y)e^{-\vert y\vert^{2}/(2t)},
\end{equation}
where $\theta_{t}$ is any function of $t$ converging to zero as $t$ goes to infinity, and $\chi_{0}$ is a positive constant. Likewise, the survival probability of Brownian motion satisfies the uniform asymptotic formula
\begin{equation}\label{survival_small_x}
k_{t}(x)\underset{\vert x\vert\leq \theta_{t}\sqrt{t}}{\sim} \chi t^{-p/2}u(x),
\end{equation}
with the same $\theta_{t}$ as before and a positive constant $\chi$. 

Thanks to the coupling \eqref{coupling_few_moments}, the above asymptotic results can be transferred \cite[Lem.~20]{DeWa-15} to the random walk $\{S(n)\}$ when the random walk starts far enough from the boundary, compared to the typical scale that we are considering. For example, for $\epsilon$ small enough and with ${K}_{n,\epsilon}$ defined in \eqref{eq:def_K-n-epsilon},
\begin{equation}
\label{survival_S_small_x}
     \pr(\tau_{x}>n)\underset{\substack{x\in{K}_{n,\epsilon}\\\vert x\vert\leq \theta_{n}\sqrt{n}}}{\sim}\chi u(x)n^{-p/2}.
\end{equation}
Let us review how this technique yields exact asymptotics for $\pr(\tau_{x}>n)$, independently of the initial position of $x$.  The matter is then to find the asymptotic formula when $x$ is close to the boundary. In order to deal with these issues, Denisov and Wachtel introduce the stopping time $t_{x,\epsilon}(n)$, see \eqref{eq:def_t_x_n}. Then they prove \cite[Lem.~14]{DeWa-15} that $t_{x,\epsilon}(n)$ is small on the event that $\tau_{x}\geq n^{1-\epsilon}$. Namely, there exists a constant $C> 0$ such that 
\begin{equation}
\label{bound_stopping_time}
     \pr(t_{x,\epsilon}\geq n^{1-\epsilon},\tau_{x}\geq n^{1-\epsilon})\leq \exp(-Cn^{\epsilon}).
\end{equation}
Therefore, on an exit time of order $n$, the random walk spends most of its time in a regime which can be controlled by the coupling \eqref{coupling_few_moments}, and one can thus show that 
\begin{equation*}\pr(\tau_{x}>n)\underset{n\rightarrow \infty}{\sim}\chi\cdot n^{-p/2}\cdot \e[u(x_{n,\epsilon});t_{x,\epsilon}(n)\leq \tau_{x},t_{x,\epsilon}(n)\leq n^{1-\epsilon}],\end{equation*}
where $x_{n,\epsilon}=x+S(t_{x,\epsilon}(n))$. The last step of their method is to adapt the definition of $V(x)$ in \eqref{definition_V(x)} to the stopping time $t_{x,\epsilon}(n)$, which yields \cite[Lem.~21]{DeWa-15}
\begin{equation}\label{alternative_definition_V(x)}
\lim\limits_{n\rightarrow \infty}\e[u(x_{n,\epsilon});t_{x,\epsilon}(n)\leq \tau_{x},t_{x,\epsilon}(n)\leq n^{1-\epsilon}]=V(x),
\end{equation}
and the exact asymptotic formula given in \cite[Thm~1]{DeWa-15}:
\begin{equation}
\label{asymptotic_survival_time}
     \pr(\tau_{x}>n)\underset{n\rightarrow \infty}{\sim}\chi\cdot n^{-p/2}\cdot V(x).
\end{equation}
The local limit theorems are then obtained from the previous results; we do not review their proof here, since the pattern is roughly the same as the one of the proof of Theorem \ref{thm:asymp_probab_boundary} in the following section. However, let us stress that these proofs rely on some important estimates on the local probability of non-constrained random walks, namely, by \cite[Lem.~29]{DeWa-15}, there exist positive constants $a$ and $C$ such that for all $u\geq 0$,
\begin{equation}
\label{bound_local_probability}
     \limsup_{n\rightarrow \infty} n^{d/2}\sup_{\vert z-x\vert \geq u\sqrt{n}}\pr(x+S(n)=z)\leq C\exp(-au^{2}).
\end{equation}
In particular  \cite[Lem.~27]{DeWa-15}, using \eqref{bound_local_probability} at $u=0$ together with \eqref{asymptotic_survival_time} yields the existence of a positive constant $C(x)$ for each $x\in{K}$ such that 
\begin{equation}\label{bound_local_probability_in_cone}
\sup_{y\in{K}}\pr(x+S(n)=y,\tau_{x}\geq n)\leq C(x)n^{-p/2-d/2}.
\end{equation}
Finally, an important tool in the proofs of \cite{DeWa-15} is provided by Fuk-Nagaev inequalities for random walks in a cone. We shall also use them in the proof of Theorem \ref{thm:asymp_probab_boundary}. In Appendix \ref{Fuk-Nagaev-cone}, we state and prove a general result summarizing these inequalities.

\section{Local limit theorem at the mesoscopic scale}
\label{sec:mesoscopic}

This section is dedicated to proving Theorem \ref{thm:asymp_probab_boundary}, which gives the uniform asymptotics of the local probability $\pr(x+S(n)=y,\tau_{x}>n)$ as $n$ goes to infinity and $\vert y\vert\leq A\sqrt{n}$ (throughout the section, $A>0$ is a fixed parameter which is expected to be large). We introduce
\begin{equation*}
     {K}_{n,\epsilon}^{A}=\lbrace z\in{K}:\vert z\vert\leq A\sqrt{n},\, \dist(z,\partial{K})\geq n^{1/2-\epsilon}\rbrace.
\end{equation*}

\subsection{Uniform convergence of the exit time and the conditioned distribution far from the boundary}\label{Section:Local_limit}

\begin{proposition}\label{uniformAway}
Let $B>0$, $0<s<1$ and $D\subset{\bf R}^{d}$ be a bounded convex domain containing $0$ in its interior. Then there exists $\epsilon>0$ such that 
\begin{equation*}\pr(\tau_{y}>n)\underset{n\rightarrow\infty}{\sim} k_{n}(y)\end{equation*}
and
\begin{equation*}\pr(y+S(n)\in \sqrt{n}(x+tD),\tau_{y}>n)\underset{n\rightarrow\infty}{\sim} \int_{x+tD}\bar{K}(y/\sqrt{n},z)dz,\end{equation*}
uniformly for all $y\in{K}_{n,\epsilon}^{A}$, $x\in{K}\cap B(0,B)$ and $t\in [s,1]$.
\end{proposition}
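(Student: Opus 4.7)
The plan is to prove both asymptotics simultaneously by the coupling-and-sandwich strategy of \cite{DeWa-15}, adapted to the fluctuation regime $|y|\sim\sqrt n$. The novelty compared to \cite[Lem.~20]{DeWa-15} is that the Gaussian factor $e^{-|y|^2/(2n)}$ is now non-trivial, so one must compare the discrete survival probability to $k_n(y)=k_1(y/\sqrt n)$ directly, rather than to its small-argument expansion \eqref{heat_kernel_small_x}.

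Fix $\delta=r-2>p$ (from \ref{H:moments}) and a small parameter $\gamma>0$ to be tuned. The coupling \eqref{coupling_few_moments} yields a Brownian motion $B$ on the same probability space with
\begin{equation*}
     \pr(\Omega_n^{\gamma,c})\leq Cn^{2\gamma+\gamma\delta-\delta/2},\qquad
     \Omega_n^\gamma:=\Bigl\{\sup_{u\leq n}\vert B(u)-S(\lfloor u\rfloor)\vert<n^{1/2-\gamma}\Bigr\}.
\end{equation*}
Combining Theorem~\ref{estimatek} with \eqref{lower_bound_y_1} applied at $y/\sqrt n$ gives the baseline $k_n(y)\geq cn^{-p\epsilon}$ uniformly over $y\in K_{n,\epsilon}^A$; a similar polynomial lower bound holds for $\int_{x+tD}\bar K(y/\sqrt n,z)dz$ via Theorem~\ref{estimateK} (integrating over a subset of $x+tD$ at controlled distance to $\partial K$). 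Any error of size $o(n^{-p\epsilon})$ is therefore negligible.

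On $\Omega_n^\gamma$ one has the deterministic inclusions
\begin{equation*}
     \{\tau_y^{\bm,K^{(n^{1/2-\gamma})}}>n\}\subset\{\tau_y>n\}\subset\{\tau_y^{\bm,K+B(0,n^{1/2-\gamma})}>n\},
\end{equation*}
where $K^{(r)}:=\{z\in K:\dist(z,\partial K)>r\}$, together with an analogous sandwich relating $\{y+S(n)\in\sqrt n(x+tD)\}$ to $\{y+B(n)\in\sqrt n(x+tD)\}$ for $\sqrt n(x+tD)$ slightly inflated or deflated by $n^{1/2-\gamma}$. The problem reduces to showing, uniformly in $y$, $x$ and $t$, that both Brownian sandwich probabilities equal $\int_{x+tD}\bar K(y/\sqrt n,z)dz\cdot(1+o(1))$. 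By Brownian scaling, this is a unit-time perturbation statement: $n^{-\gamma}$ perturbations of both the cone and the endpoint set should change the integral by only $o(1)$. The endpoint perturbation produces a boundary shell of volume $O(n^{-\gamma})$ with negligible kernel integral, by the Hölder regularity of $\bar K$ in Theorem~\ref{estimateK}. The cone perturbation is controlled via the strong Markov property at the first hitting time $\sigma$ of the $n^{-\gamma}$-boundary strip: the error is $\e[\mathbf 1_{\sigma<1}k_{1-\sigma}(z+B(\sigma))]$, and on $\{\sigma<1\}$ the point $z+B(\sigma)$ lies within distance $n^{-\gamma}$ of $\partial K$, at which location Theorem~\ref{estimatek} together with the polynomial vanishing of the r\'eduite $u$ near $\partial K$ (available on convex cones) yields a bound of order $n^{-\gamma\nu}$ for some $\nu>0$. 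Choosing $\epsilon$ and $\gamma$ so that $p\epsilon<\gamma\nu$ and $p\epsilon<\delta/2-\gamma(2+\delta)$ (both achievable by fixing $\gamma$ small and then $\epsilon$ smaller) makes every error term $o(n^{-p\epsilon})$, concluding the proof.

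The main obstacle is precisely the uniform, quantitative cone-perturbation estimate just sketched. In \cite{DeWa-15} the regime $|x|\leq\theta_n\sqrt n$ with $\theta_n\to 0$ permitted the direct use of the sharp interior asymptotic \eqref{heat_kernel_small_x} and bypassed this comparison; here $|y|$ may be of order $\sqrt n$, the Gaussian factor in $\bar K$ must be retained explicitly, and the comparison of Brownian survival in $K$ with that in its $n^{-\gamma}$-perturbation has to hold uniformly over starting points in $K_{n,\epsilon}^A$. This is the raison d'\^etre of the heat-kernel estimates compiled in Section~\ref{gaussian_estimates} and refined in Appendix~\ref{appendix_heat_estimates}.
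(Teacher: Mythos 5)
Your overall strategy --- couple $S$ to a Brownian motion, sandwich the discrete event between two Brownian events, then control the sandwich gap using heat-kernel estimates and the uniform lower bound $k_n(y)\geq c\,n^{-p\epsilon}$ over $K_{n,\epsilon}^A$ --- is exactly the one used in the paper. The crucial difference lies in the geometry of the sandwich, and this is where a genuine gap appears.

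You perturb the \emph{domain}, obtaining the inclusions
\begin{equation*}
\{\tau_y^{\bm,K^{(n^{1/2-\gamma})}}>n\}\subset\{\tau_y>n\}\subset\{\tau_y^{\bm,K+B(0,n^{1/2-\gamma})}>n\}
\end{equation*}
on the coupling event. But neither $K^{(r)}$ nor $K+B(0,r)$ is a cone, so their Brownian survival probabilities are not directly $k_n(\cdot)$ and the scaling $k_t(x)=\bar k(x/\sqrt t)$ and Theorems \ref{estimatek}--\ref{estimateK} do not apply to them. To bridge this, you invoke the strong Markov property at the first hitting time $\sigma$ of the $n^{-\gamma}$-strip, writing the error as $\e[\mathbf 1_{\sigma<1}k_{1-\sigma}(z+B(\sigma))]$ (at unit scale) and claiming a bound of order $n^{-\gamma\nu}$ from Theorem \ref{estimatek} plus the linear vanishing of $u$ near $\partial K$. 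This step is not established: Theorem \ref{estimatek} gives $k_{1-\sigma}(x)\leq C_1 u(x)/u(x_{\sqrt{1-\sigma}})$, and with $\dist(x,\partial K)\leq n^{-\gamma}$ and \eqref{lower_bound_y_1} one obtains $k_{1-\sigma}(x)\leq C\,n^{-\gamma}|x|^{p-1}(1-\sigma)^{-p/2}$, whose denominator blows up as $\sigma\to 1$. Simply capping by $k_{1-\sigma}\leq 1$ for $\sigma$ near $1$ leaves you with $\pr(\sigma\in[1-\delta,1))$, which is not $o(\bar k(z))$ without further work. So the late-hitting case $\sigma\approx 1$ is not handled.

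The paper sidesteps the issue entirely by perturbing the \emph{starting point} rather than the domain: one fixes $x_0$ with $x_0+K\subset K$ and $\dist(R_0x_0+K,\partial K)>1$, sets $y^{\pm}=y\pm R_0x_0\,n^{1/2-\gamma}$, and obtains the cleaner inclusions $\{\tau_y>n\}\cap A_n\subset\{\tau^{\bm}_{y^+}>n\}$ and $\{\tau^{\bm}_{y^-}>n\}\cap A_n\subset\{\tau_y>n\}$ on the coupling event $A_n$. Because the Brownian domain remains $K$, the bracketing quantities are exactly $k_n(y^{\pm})=\bar k(y^{\pm}/\sqrt n)$, and the gap is controlled purely by the spatial H\"older continuity of $\bar k$ (Proposition \ref{holderContinuity}): $|k_n(y)-k_n(y^{\pm})|\leq C A^{p-1}n^{-\gamma\alpha}$, which is $o(n^{-p\epsilon})$ for $\epsilon<\gamma\alpha/p$. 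The second asymptotics is handled analogously using the translated starting points together with a dilation/erosion of the target set $D_{x,t}$ and the H\"older continuity of $\bar K$ (plus a Steiner-formula volume estimate for the shell).

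It is worth noting that your domain-perturbation sandwich does \emph{imply} the paper's starting-point sandwich: for a convex cone with interior direction $v$ satisfying $\dist(v,\partial K)\geq c_0>0$, one has $(r/c_0)v+K\subset K^{(r)}$ and $K+B(0,r)\subset -(r/c_0)v+K$, so $\pr(\tau^{\bm,K^{(r)}}_y>n)\geq k_n(y-(r/c_0)v)$ and $\pr(\tau^{\bm,K+B(0,r)}_y>n)\leq k_n(y+(r/c_0)v)$, after which Proposition \ref{holderContinuity} finishes the job. If you route the argument through this reduction, your proof becomes correct and essentially coincides with the paper's; as written, the strong-Markov estimate is the missing piece.
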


Let us start by giving an upper bound for $u(y_{1})$, which will be useful for the proof of Proposition \ref{uniformAway} (refer to Section \ref{gaussian_estimates} for the definition of $y_{t}$, $t>0$).

\begin{lemma}\label{lowerBoundy1}
There exists a constant $C>0$ such that for all $y\in{K}$, we can choose $y_{1}$ with
\begin{equation}
\label{eq:lowerBoundy1}
     u(y_{1})\leq u(y)\vee C(\vert y\vert\vee 1)^{p-1}.
\end{equation}
\end{lemma}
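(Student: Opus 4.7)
The plan is to split according to whether $y$ is far from or close to $\partial K$; the form ``$u(y) \vee \cdots$'' of the bound tells us that exactly one of the two terms needs to dominate in each regime. First I would fix once and for all a vector $v_0$ in the interior of $K$ with $|v_0| \leq 1$, together with a constant $c_0 > 0$ such that $B(v_0, c_0) \subset K$; this is possible because $\Sigma$ is open. The convex cone assumption \ref{H:regularity_cone} then gives, for any $y \in K$ and any $w \in \rd$ with $|w| \leq c_0$, that $y + v_0 + w = y + (v_0 + w) \in K$, so $\dist(y + v_0, \partial K) \geq c_0$ uniformly in $y$.

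The two cases will then read as follows. If $\dist(y, \partial K) \geq c_0$, I take $y_1 = y$ (a legitimate choice in the sense of Section \ref{gaussian_estimates}), giving $u(y_1) = u(y)$ and \eqref{eq:lowerBoundy1} trivially. If $\dist(y, \partial K) < c_0$, I take $y_1 = y + v_0$: then $|y_1 - y| = |v_0| \leq 1$, $\dist(y_1, \partial K) \geq c_0$, and, crucially, $\dist(y_1, \partial K) \leq \dist(y, \partial K) + |v_0| < c_0 + 1$ remains bounded by a constant. The heart of the proof is the estimate of $u(y_1)$ in this second case. I would combine Lemma \ref{bound_gradient_u} with the $(p-1)$-homogeneity of $\nabla u$ inherited from the $p$-homogeneity of $u$ to obtain the global bound $|\nabla u(z)| \leq C|z|^{p-1}$ for $z \in K$. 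Picking a closest boundary point $z_\partial \in \partial K$ to $y_1$ and using $u(z_\partial) = 0$, integration along the segment $[z_\partial, y_1]$ (which lies in $\overline{K}$ by convexity, with half-open part $(z_\partial, y_1] \subset K$) yields
\[
u(y_1) \leq |y_1 - z_\partial| \sup_{z \in [z_\partial, y_1]} |\nabla u(z)| \leq (c_0 + 1)\, C\, (|y| + c_0 + 2)^{p-1} \leq C'(|y| \vee 1)^{p-1},
\]
where I bounded $|z| \leq |y_1| + \dist(y_1, \partial K) \leq |y| + c_0 + 2$ on the segment and used $p \geq 1$ to exploit monotonicity of $r \mapsto r^{p-1}$. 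Combining both cases yields \eqref{eq:lowerBoundy1}.

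The main obstacle is the global gradient estimate $|\nabla u(z)| \leq C|z|^{p-1}$ on all of $K$: it requires $|\nabla u|$ to be bounded, or at least to have controlled behavior, on the unit section $\overline{\Sigma} = \overline{K} \cap \mathbf S^{d-1}$, so that the $p$-homogeneity of $u$ can be applied to scale up to general $z \in K$. The remark ``the gradient of $u$ is locally bounded'', made precise by Lemma \ref{bound_gradient_u}, is designed precisely to handle this regularity point; once it is in hand, the argument assembles in essentially one line via the convex cone structure.
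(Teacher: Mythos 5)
Your proof is correct and follows the paper's exact case split on $\dist(y,\partial K)$ (taking $y_1=y$ when that distance exceeds $c_0$, and a point displaced into the interior otherwise). The only difference is in the second case: the paper bounds $u(y_1)$ directly via Varopoulos' estimate \eqref{upper_bound_u}, whereas you recover essentially the same bound by integrating the gradient estimate of Lemma \ref{bound_gradient_u} along the segment from a nearest boundary point to $y_1$ --- a harmless detour, since Lemma \ref{bound_gradient_u} is itself derived from \eqref{upper_bound_u}.
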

\begin{proof}
Recall that $y_{1}$ is an arbitrary point such that $\dist(y,y_{1})\leq 1$ and $\dist(y_{1},\partial {K})\geq c_{0}$. Hence, when $\dist(y,\partial {K})\geq c_{0}$, we may choose $y_{1}=y$ and thus $u(y_{1})=u(y)$. Suppose now that $\dist(y,\partial {K})\leq c_{0}$. Then any choice of $y_{1}$ must satisfy $\dist(y_{1},{K})\geq c_{0}$, which in particular yields $\vert y_{1}\vert\geq c_{0}$. Since $y_{1}\in B(y,1)$, we have also $\dist(y_{1},\partial {K})\leq c_{0}+1$, and thus \eqref{upper_bound_u} gives
\begin{equation*}
     u(y_{1})\leq C\dist(y_{1},\partial{K})\big\vert y_{1}\vert^{p-1}\leq C(\vert y\vert\vee c_{0})^{p-1},
\end{equation*}
for some constant $C$. Thus, in any case we can choose $y_{1}$ such that \eqref{eq:lowerBoundy1} holds.
\end{proof}

\begin{proof}[Proof of Proposition \ref{uniformAway}]
The proof follows closely the one of \cite[Lem.~20]{DeWa-15}; since we have to take care of the extra condition of uniformity, we choose to rewrite it completely. 

Fix $0< \gamma\leq \frac{\delta}{2(2+\delta)}$, $0<s<1$ and choose $x_{0}\in{\bf R}^{d}$ and $R_{0}>0$ such that $\vert x_{0}\vert =1$, $x_{0}+{K}\subset {K}$ and $\dist(R_{0}x_{0}+{K}, \partial{K})>1$. Let $y\in {K}_{n,\epsilon}^{A}$ and $y^{\pm}:=y\pm R_{0}x_{0}n^{1/2-\gamma}$. Let 
\begin{equation*}
     A_{n}=\bigl\lbrace \sup_{u\leq n}\vert B(u)-S(\lfloor u\rfloor)\vert\leq n^{1/2-\gamma}\bigr\rbrace,
\end{equation*}
where  $B$ is the Brownian motion coupled to $S$ in Section \ref{description_proofs_DW}. By \eqref{coupling_few_moments}, $\pr(A_{n}^{c})\leq n^{-r}$, where $r=\delta/2-2\gamma-\gamma\delta$. Moreover (see the proof of \cite[Lem.~20]{DeWa-15}), for $n$ large enough
\begin{equation*}
     \lbrace \tau_{y}>n\rbrace\cap A_{n}\subset \lbrace \tau^{\bm}_{y^{+}}>n\rbrace\quad\text{and}\quad\lbrace \tau_{y^{-}}^{\bm}>n\rbrace\cap A_{n}\subset \lbrace \tau_{y}>n\rbrace,
\end{equation*}
which yields
\begin{equation}\label{inequalitiesKp}
k_{n}(y^{-})+O(n^{-r})\leq \pr(\tau_{y}>n)\leq k_{n}(y^{+})+O(n^{-r}).
\end{equation}
In order to conclude the first part of the proposition, we have to show that, uniformly on $y\in {K}_{n,\epsilon}^{A}$, $k_{n}(y^{-})\sim k_{n}(y^{+})$ and $n^{-r}=o(k_{n}(y))$ as $n$ goes to infinity. On the one hand, by homogeneity of $k$ and by Proposition \ref{holderContinuity},
\begin{align*}
     \vert k_{n}(y)-k_{n}(y^{\pm})\vert&=\vert \bar{k}(y/\sqrt{n})-\bar{k}(y^{\pm}/\sqrt{n})\vert\\
     &\leq C_{\alpha}(1+\vert y/\sqrt{n}\vert^{p-1})\vert y-y^{\pm}\vert^{\alpha}/n^{\alpha/2}\leq CA^{p-1}n^{-\gamma\alpha}
\end{align*}
for some constant $C>0$. On the other hand, by the estimates of Theorem \ref{estimatek} and Lemma \ref{lowerBoundy1}, there exists a constant $c$ such that 
\begin{equation*}\bar{k}(y)\geq c\frac{u(y)}{u(y_{1})}\geq c\left(1\wedge \frac{u(y)}{(\vert y\vert\vee 1)^{p-1}}\right) \end{equation*}
for $y\in{K}$. Since ${K}$ is convex, by \cite[Lem.~19]{DeWa-15}  there exists a constant $C$ such that
\begin{equation*}
     u(y)\geq C\dist(y,\partial {K})^{p}.
\end{equation*}
Thus, for $y\in {K}_{n,\epsilon}^{A}$ and $n$ large enough,
\begin{equation}\label{lowerBoundk}
\bar{k}(y/\sqrt{n})\geq c\left( 1\wedge \frac{Cn^{-p\epsilon}}{(\vert y/\sqrt{n}\vert\vee 1)^{p-1}}\right)\geq cCn^{-p\epsilon}\left(1\wedge \frac{1}{A^{p-1}}\right)\geq c'n^{-p\epsilon}
\end{equation}
for some $c'>0$. Suppose that $\epsilon$ is such that $(\gamma\alpha\wedge r)> p\epsilon$. Then $n^{-\gamma\alpha}=o(k_{n}(y))$ and $n^{-r}=o(k_{n}(y))$,
which proves that $k_{n}(y)\sim k_{n}(y^{\pm})$ and $n^{-r}=o(k_{n}(y))$. Therefore, \eqref{inequalitiesKp} yields that uniformly in $y\in{K}_{n,\epsilon}^{A}$,
\begin{equation*}\pr(\tau_{y}>n)\underset{n\rightarrow \infty}{\sim} k_{n}(y).\end{equation*}
 
We now turn to the second asymptotics in Proposition \ref{uniformAway}.
Likewise, for $y\in{K}_{n,\epsilon}^{A}$, $x\in{K}\cap B(0,B)$, $t\in [s,1]$ and $D\subset{K}$, we have by \cite[Eq.~(46) and Eq.~(47)]{DeWa-15}
\begin{equation}\label{inequlitiesKpD}
\int_{\sqrt{n}D_{x,t}^{-}}K_{n}(y^{-},z)dz+O(n^{-r})\leq \pr(y+S(n)\in \sqrt{n}D_{x,t},\tau_{y}> n)\leq \int_{\sqrt{n}D_{x,t}^{+}}K_{n}(y^{+},z)dz+O(n^{-r}),
\end{equation}
where 
\begin{equation*}
     \left\{\begin{array}{lll}
     D_{x,t}&=&x+tD,\\
     D_{x,t}^{+}&=&\lbrace z\in{K}: \dist(z,D_{x,t})\leq 2n^{-\gamma}\rbrace,\\
     D_{x,t}^{-}&=&\lbrace z\in D_{x,t}: \dist(z,\partial D_{x,t})\geq 2n^{-\gamma}\rbrace.
     \end{array}\right. 
\end{equation*}     
By homogeneity,
\begin{equation*}\int_{\sqrt{n}D_{x,t}^{\pm}}K_{n}(y^{\pm},z)dz=\int_{D_{x,t}^{\pm}}\bar{K}(y^{\pm}/\sqrt{n},z)dz,\end{equation*}
and by Proposition \ref{holderContinuity}, 
\begin{equation*}\left\vert \bar{K}(y^{\pm}/\sqrt{n},z)-\bar{K}(y/\sqrt{n},z)\right\vert\leq Cn^{-\gamma\alpha}(1+ \vert y/\sqrt{n}\vert)^{p-1}\exp\left(-\frac{\vert y/\sqrt{n}-z\vert^{2}}{2c_{3}}\right),\end{equation*}
which implies
\begin{multline*}
\bigg\vert\int_{D_{x,t}^{\pm}}\bar{K}(y^{\pm}/\sqrt{n},z)dz-\int_{D_{x,t}}\bar{K}(y/\sqrt{n},z)dz\bigg\vert\\
\leq C(1+A)^{p-1}n^{-\gamma\alpha}\int_{D_{x,t}^{+}}\exp\left(-\frac{\vert y/\sqrt{n}-z\vert^{2}}{2c_{3}}\right)dz+C\Vol(D_{x,t}^{+}\setminus D_{x,t}^{-}).
\end{multline*}
The definition of $D_{x,t}^{-}$ and $D_{x,t}^{+}$ yields
\begin{equation*}\Vol(D_{x}^{+}\setminus D_{x}^{-})=\Vol(\{z\in{K}: \dist(z,\partial D_{x,t})< 2n^{-\gamma}\})\leq\Vol(\{z\in{\bf R}^{d}: \dist(z,t\partial D)< 2n^{-\gamma}\}).\end{equation*}
Hence, applying Steiner formula (Theorem 46 in \cite[Chap.\ 16]{Mo-08}) to the convex $tD$ gives
\begin{equation*}\Vol(D_{x,t}^{+}\setminus D_{x,t}^{-})\underset{n\rightarrow \infty}{\sim} 2n^{-\gamma}t^{d-1}\lambda(D)\end{equation*}
uniformly on all $x\in {K}\cap B(0,B)$ and $t\in [s,1]$, where $\lambda(D)$ denotes the surface area of $D$. Thus, since $\alpha\leq 1$, there exists a constant $C$ independent of $t$ such that
\begin{equation*}\left\vert\int_{D_{x,t}^{\pm}}\bar{K}(y^{\pm}/\sqrt{n},z)dz-\int_{D_{x,t}}\bar{K}(y/\sqrt{n},z)dz\right\vert\leq Cn^{-\gamma\alpha}.\end{equation*}
Therefore, \eqref{inequlitiesKpD} yields
\begin{align*}
\int_{D_{x,t}}\bar{K}(y/\sqrt{n},z)dz+O(n^{-r})+O(n^{-\alpha\gamma})&\leq \pr(y+S(n)\in \sqrt{n}D_{x,t},\tau_{y}> n)\\
&\leq \int_{D_{x,t}}\bar{K}(y/\sqrt{n},z)dz+O(n^{-r})+O(n^{-\alpha\gamma}).
\end{align*}
It remains to show that  as $n$ goes to infinity, $n^{-(r\wedge\alpha\gamma)}=o(\int_{D_{x,t}}\bar{K}(y/\sqrt{n},z)dz)$. 
By Theorem \ref{estimateK} and Equation \eqref{lower_bound_ball_C}, we have
\begin{align*}
\bar{K}(y/\sqrt{n},z)&\geq c_{2}\frac{\bar{k}(y/\sqrt{n})\bar{k}(z)}{\sqrt{V(y/\sqrt{n},1)V(z,1)}}\exp\left(-\frac{\vert y/\sqrt{n}-z\vert^{2}}{C_{3}}\right)\\
&\geq c \bar{k}(y/\sqrt{n})\bar{k}(z)\exp\left(-\frac{\vert y/\sqrt{n}-z\vert^{2}}{C_{3}}\right)
\end{align*}
for $y,z\in{K}$ and some constant $c>0$. Hence, if we set $M:=\sup\{\vert z\vert: z\in D\}$, then $D_{x,t}\subset B(0,B+tM)$ and thus
\begin{equation*}\int_{D_{x,t}}\bar{K}(y/\sqrt{n},z)dz\geq C\bar{k}(y/\sqrt{n})\exp(-4(B+tM)^{2}/C_{3})\int_{D_{x,t}}\bar{k}(z)dz.\end{equation*}
Recalling from \eqref{lowerBoundk} that $\bar{k}(z/\sqrt{n})\geq cn^{-p\epsilon}$ for $z\in{K}_{n,\epsilon}$, we get
\begin{equation}\label{lower_bound_K_Dx}\int_{D_{x,t}}\bar{K}(y/\sqrt{n},z)dz\geq c'n^{-p\epsilon}\Vol\left(D_{x,t}\cap\frac{1}{\sqrt{n}}{K}_{n,\epsilon}\right)
\end{equation}
for some constant $c'>0$. Let us find a lower bound for $\Vol\left(D_{x,t}\cap\frac{1}{\sqrt{n}}{K}_{n,\epsilon}\right)$ in \eqref{lower_bound_K_Dx}. Note that since $D_{x,t}$ is contained in $B(0,B+M)$,
\begin{equation*}\Vol\left(D_{x,t}\cap\frac{1}{\sqrt{n}}{K}_{n,\epsilon}\right)\geq \Vol(D_{x,t}\cap {K})-\Vol\left(({K}\cap B(0,B+M))\setminus\frac{1}{\sqrt{n}}{K}_{n,\epsilon}\right).\end{equation*}  
Since $0$ is in the interior of $D$, there exists $u>0$ such that $B(0,u)\subset D$. Hence, by \eqref{lower_bound_ball_C},
\begin{equation*}\Vol(D_{x,t}\cap {K})\geq \Vol\left(B(x,tu)\cap K\right)\geq c(tu)^{d}\geq c(su)^{d}\end{equation*}
for some constant $c$ independent of $x\in B(0,B)\cap{K}$. Since $\frac{1}{\sqrt{n}}{K}_{n,\epsilon}$ converges to $K$ in Hausdorff distance as $n$ goes to infinity,
\begin{equation*}
     \Vol\left(({K}\cap B(0,B+M))\setminus\frac{1}{\sqrt{n}}{K}_{n,\epsilon}\right)
\end{equation*}
goes to $0$ as $n$ goes to infinity. Thus, for $n$ large enough, the quantity above is smaller than $c(su)^{d}/2$ and 
\begin{equation*}
     \Vol\left(D_{x,t}\cap\frac{1}{\sqrt{n}}{K}_{n,\epsilon}\right)\geq \Vol(D_{x,t}\cap {K})-\Vol\left(({K}\cap B(0,B+M))\setminus\frac{1}{\sqrt{n}}{K}_{n,\epsilon}\right)\geq \frac{c(su)^{d}}{2}
\end{equation*}
for all $x\in B(0,B)\cap{K}$, $t\in[s,1]$. The latter inequality together with \eqref{lower_bound_K_Dx} yields $\delta>0$ such that
\begin{equation*}\int_{D_{x,t}}\bar{K}(y/\sqrt{n},z)dz\geq c'\delta n^{-p\epsilon}\end{equation*}
for $n$ large enough, $y\in{K}_{n,\epsilon}^{A}$, $x\in B(0,B)\cap {K}$ and $s\leq t\leq 1$. Choosing $\epsilon< (r\wedge\alpha\gamma)/(2p)$ thus yields that $n^{-(r\wedge\alpha\gamma)}=o(\int_{D_{x,t}}\bar{K}(y/\sqrt{n},z)dz)$. For these values of $\epsilon$, we have 
\begin{equation*} \pr(y+S(n)\in \sqrt{n}D_{x,t},\tau_{y}> n)\sim \int_{D_{x,t}}\bar{K}(y/\sqrt{n},z)dz\end{equation*}
uniformly on $y,x$ and $t$ satisfying the conditions of the statement.
\end{proof}
As a consequence of Proposition \ref{uniformAway}, we give an estimate on the scaling of exit times.
\begin{corollary}\label{boundScalingExitTime}
Let $0<s<1$. There exists a constant $C>0$ only depending on $s$ such that 
\begin{equation*}\limsup_{n\rightarrow \infty}\sup_{\substack{x\in{K}_{n,\epsilon}^{A}\\m\geq sn}}\frac{\pr(\tau_{x}>m)}{\pr(\tau_{x}>n)}\leq C.\end{equation*}
\end{corollary}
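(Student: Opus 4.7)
The plan is to split into two cases according to the relative size of $m$ and $n$, reduce the surviving case to a deterministic heat kernel estimate, and finish with the two-sided bound from Theorem \ref{estimatek} together with the explicit control on $u$.

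First, observe that when $m\geq n$ the event $\{\tau_x>m\}$ is contained in $\{\tau_x>n\}$, so the ratio is at most $1$. The whole question therefore reduces to the range $sn\leq m\leq n$. In that range, the condition $x\in K_{n,\epsilon}^A$ implies $x\in K_{m,\epsilon}^{A/\sqrt{s}}$: indeed $|x|\leq A\sqrt{n}\leq (A/\sqrt{s})\sqrt{m}$, and since $m\leq n$ and $1/2-\epsilon>0$ we have $\dist(x,\partial K)\geq n^{1/2-\epsilon}\geq m^{1/2-\epsilon}$.

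Next, I would invoke Proposition \ref{uniformAway} twice, once with parameter $A$ at time $n$ and once with parameter $A/\sqrt{s}$ at time $m$, noting that the $\epsilon$ produced by that proposition does not depend on $A$. Since $m\geq sn\to\infty$ with $n$, for $n$ large enough both asymptotics
\[
\pr(\tau_{x}>n)\sim k_{n}(x)\quad\text{and}\quad \pr(\tau_{x}>m)\sim k_{m}(x)
\]
hold uniformly in $x\in K_{n,\epsilon}^A$ and $m\in[sn,n]$. Consequently it suffices to exhibit a bound, depending only on $s$, for the quotient $k_{m}(x)/k_{n}(x)$.

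For the heat kernel comparison I would use Theorem \ref{estimatek}, which gives $k_{t}(x)\asymp u(x)/u(x_{\sqrt{t}})$ with universal constants, hence
\[
\frac{k_{m}(x)}{k_{n}(x)}\leq \frac{C_{1}}{c_{1}}\cdot\frac{u(x_{\sqrt{n}})}{u(x_{\sqrt{m}})}.
\]
The numerator is bounded above using $|x_{\sqrt{n}}|\leq |x|+\sqrt{n}\leq (A+1)\sqrt{n}$ together with the representation $u(y)=|y|^{p}m(y/|y|)\leq (\sup_{\Sigma}m)\cdot|y|^{p}$, giving $u(x_{\sqrt{n}})\leq M(A+1)^{p}n^{p/2}$. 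The denominator is bounded below by \eqref{lower_bound_y_1} (which is precisely the lower bound for $u$ at points far from the boundary, guaranteed by convexity via \cite[Lem.~19]{DeWa-15}), yielding $u(x_{\sqrt{m}})\geq c\,m^{p/2}\geq c\,s^{p/2}n^{p/2}$. Combining these two estimates produces a bound of the form $(C_{1}/c_{1})\cdot M(A+1)^{p}/(c\,s^{p/2})$, which depends only on $s$ and the fixed parameter $A$.

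I do not foresee any serious obstacle. The only points requiring care are (i) verifying that the value of $\epsilon$ in $K_{n,\epsilon}^{A}$ can be chosen once and for all so that Proposition \ref{uniformAway} applies at both $n$ and $m$ — this is immediate because that proposition's condition on $\epsilon$ is $(\gamma\alpha\wedge r)>p\epsilon$, which is independent of $A$ — and (ii) making sure the heat kernel constants coming out of Theorem \ref{estimatek} are indeed independent of the particular choices of $x_{\sqrt{n}}$ and $x_{\sqrt{m}}$, which is explicitly part of that theorem's statement.
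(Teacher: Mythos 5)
Your proof is correct and follows the same global pattern as the paper's — use Proposition \ref{uniformAway} to replace the survival probability ratio by the heat-kernel ratio $k_m(x)/k_n(x)$, then bound the latter — but your treatment of the heat-kernel ratio is genuinely different and more elementary. The paper invokes Lemma \ref{boundScalingk}, which relies on the estimate \eqref{estimateWithVolume} and $n$ iterations of the doubling-volume property \eqref{doubling_volume_property} of the r\'eduite, and in return produces a constant that is uniform over \emph{all} $x\in K$. You instead use Theorem \ref{estimatek} directly together with the explicit bounds $u(x_{\sqrt{n}})\leq M(A+1)^p n^{p/2}$ and $u(x_{\sqrt{m}})\geq c\,m^{p/2}$; this is shorter, but produces a constant that also depends on $A$, whereas the corollary's statement advertises a constant depending only on $s$. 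Since $A$ is a fixed parameter throughout the paper and every application of Corollary \ref{boundScalingExitTime} is made at the same $A$, this loss is harmless for the paper's purposes, but it is a slightly weaker conclusion than the paper's.

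On the other hand, your proof is actually \emph{more} careful than the paper's on two points that the published proof glosses over: you separate the trivial case $m\geq n$ explicitly, and you check that $x\in K_{n,\epsilon}^{A}$ with $sn\leq m\leq n$ puts $x\in K_{m,\epsilon}^{A/\sqrt{s}}$ (not $K_{m,\epsilon}^{A}$), which is what is really needed to invoke Proposition \ref{uniformAway} at time $m$. The paper applies Proposition \ref{uniformAway} at time $m$ without adjusting the radius parameter, and handles $m\geq n$ only implicitly. So your argument is both an honest simplification of the heat-kernel step and a cleaner justification of the reduction step.
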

\begin{proof}
Let $0<\eta<1$. By Proposition \ref{uniformAway}, there exists $n_{0}\geq 1$ such that for $n\geq n_{0}$ and $x\in{K}_{n,\epsilon}^{A}$,
\begin{equation*}(1-\eta)\bar{k}(x/\sqrt{n})\leq \pr(\tau_{x}>n)\leq (1+\eta)\bar{k}(x/\sqrt{n}).\end{equation*}
Then, for $n>n_{0}/s$ and $m\geq sn$, we have
\begin{equation*}
     \frac{\pr(\tau_{x}>m)}{\pr(\tau_{x}>n)}\leq\frac{1+\eta}{1-\eta}\frac{\bar{k}(x/\sqrt{m})}{\bar{k}(x/\sqrt{n})}.
\end{equation*}
By Lemma \ref{boundScalingk}, there exists a constant $C>0$ only depending on $s$ such that for $m\geq sn$, we have 
$\frac{\bar{k}(x/\sqrt{m})}{\bar{k}(x/\sqrt{n})}\leq C$ for all $x\in{K}$. Therefore,
\begin{equation*}
     \limsup_{n\rightarrow \infty}\sup_{\substack{x\in{K}_{n,\epsilon}^{A}\\m\geq sn}}\frac{\pr(\tau_{x}>m)}{\pr(\tau_{x}>n)}\leq \frac{1+\eta}{1-\eta}C.\qedhere
\end{equation*}
\end{proof}
\subsection{Local limit theorems far from the boundary}
The goal of this subsection is to improve the local limit theorem \cite[Thm~6]{DeWa-15} in order to get a more uniform result, which will correspond to our Theorem \ref{thm:asymp_probab_boundary} restricted to $x\in{K}_{n,\epsilon}^{A}$. As in the previous subsection, the pattern follows closely the one of \cite[Thm~6]{DeWa-15} and is divided into two steps: 
\begin{itemize}
     \item The first step, given by the following proposition, is a more general local limit theorem analogue to \cite[Thm~5]{DeWa-15}:
\end{itemize} 
     \begin{proposition}\label{generalLLT}
Uniformly on $x\in{K}_{n,\epsilon}^{A}$,
\begin{equation*}\sup_{y\in {K}}\left\vert n^{d/2}\pr(x+S(n)=y\vert \tau_{x}>n)-\frac{\bar{K}(x/\sqrt{n},y/\sqrt{n})}{\bar{k}(x/\sqrt{n})}\right\vert\rightarrow 0.\end{equation*} 
\end{proposition}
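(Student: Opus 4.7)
The plan is to follow the proof of the pointwise local limit theorem \cite[Thm~5]{DeWa-15}, upgrading every intermediate estimate to a statement uniform in $x\in {K}_{n,\epsilon}^A$, and using Proposition \ref{uniformAway} in place of the pointwise asymptotics of \cite[Lem.~20]{DeWa-15}. Fix a small $\alpha\in(0,\epsilon)$, let $\ell=\lfloor n^{1-\alpha}\rfloor$ and $m=n-\ell$, and decompose at time $m$ by the Markov property:
\begin{equation*}
\pr(x+S(n)=y,\tau_{x}>n) = \sum_{z\in\Lambda\cap {K}} \pr(x+S(m)=z,\tau_{x}>m)\,\pr(z+S(\ell)=y,\tau_{z}>\ell).
\end{equation*}
The target is to show, uniformly in $(x,y)\in {K}_{n,\epsilon}^A\times {K}$, that the sum equals $n^{-d/2}\bar K(x/\sqrt n,y/\sqrt n)(1+o(1))$; dividing by $\pr(\tau_x>n)\sim \bar k(x/\sqrt n)$ (Proposition \ref{uniformAway}) then yields the statement.

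The first factor $\pr(x+S(m)=z,\tau_x>m)$ is handled at the mesoscopic scale $\sqrt\ell$. Covering ${K}\cap B(y,\sqrt\ell\log n)$ by disjoint cubes of side $\sqrt\ell$, applying Proposition \ref{uniformAway} cube by cube, and using the H\"older continuity of $\bar K$ from Theorem \ref{estimateK}, gives a uniform approximation
\begin{equation*}
\pr(x+S(m)\in C,\tau_x>m) = \vert C\vert\, m^{-d/2}\bar K(x/\sqrt m,z_{0}/\sqrt m)(1+o(1))
\end{equation*}
for each cube $C$ centered at $z_0$, hence $\ell^{d/2}\,n^{-d/2}\bar K(x/\sqrt n,y/\sqrt n)(1+o(1))$ since $m/n\to 1$ and $\vert z_0-y\vert/\sqrt n\to 0$. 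The second factor $\pr(z+S(\ell)=y,\tau_z>\ell)$ is, up to a boundary-return correction treated below, bounded above by the unconstrained local probability $\pr(z+S(\ell)=y)$, to which the classical local limit theorem \eqref{eq:LLT_not_constrained} applies. Summing this unconstrained probability over $z$ in the $\sqrt\ell\log n$-window is a Riemann sum of the Gaussian density, which is $1+o(1)$, producing the claimed asymptotics. The contribution of $z$ outside the window is negligible by the Fuk-Nagaev inequalities of Appendix \ref{Fuk-Nagaev-cone} applied to $S(\ell)$, together with the lower bound \eqref{lowerBoundk} on $\bar k(x/\sqrt n)$.

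The main obstacle is the uniform control of the boundary-return correction $\sum_z\pr(x+S(m)=z,\tau_x>m)\,\pr(z+S(\ell)=y,\tau_z\leq\ell)$ when $y$ is close to $\partial {K}$: all relevant $z$ are then also close to $\partial {K}$, and naive bounds yield an error of the same order as the main term, since $\bar K(x/\sqrt n,y/\sqrt n)$ itself decays like $u(y)/n^{p/2}$ near the boundary. To handle this I use time reversal, writing $\pr(z+S(\ell)=y,\tau_z\leq\ell)=\pr(y+S'(\ell)=z,\tau'_y\leq\ell)$, and decompose according to the first entry time $t'_{y,\epsilon}(\ell)$ of the reverse walk into ${K}_{\ell,\epsilon}$. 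By \eqref{bound_stopping_time}, paths that enter the interior after time $\ell^{1-\epsilon}$ contribute exponentially little; for paths that enter quickly, the Markov property at $t'_{y,\epsilon}(\ell)$ brings us back into the interior, and a second application of the mesoscopic approximation together with the estimate \eqref{heat_kernel_small_x} identifies the correction as $o(n^{-d/2}\bar K(x/\sqrt n,y/\sqrt n))$. The choice of $\alpha$ must be small enough for the first-factor approximation to hold, but large enough that the unconstrained local limit theorem applies at scale $\sqrt\ell$; any sufficiently small $\alpha\in(0,\epsilon)$ (depending on the moment exponent $r$) works.
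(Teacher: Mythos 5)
Your proposal aims for a \emph{multiplicative} $(1+o(1))$ approximation of the probability $\pr(x+S(n)=y,\tau_x>n)$ by $n^{-d/2}\bar K(x/\sqrt n,y/\sqrt n)$, uniformly in $y\in K$. That is strictly stronger than what Proposition \ref{generalLLT} asserts (which is an \emph{additive} error going to $0$), and in fact it is \emph{false} uniformly in $y$. For a fixed $y\in K\cap\Lambda$, \cite[Thm~6]{DeWa-15} gives
$\pr(x+S(n)=y,\tau_x>n)\sim\kappa V(x)V'(y)n^{-p-d/2}$,
so $n^{d/2}\pr(x+S(n)=y\,\vert\,\tau_x>n)\sim(\kappa/\chi)V'(y)n^{-p/2}$, while by \eqref{heat_kernel_small_x} and \eqref{survival_small_x} we have $\bar K(x/\sqrt n,y/\sqrt n)/\bar k(x/\sqrt n)\sim(\chi_0/\chi)u(y)n^{-p/2}$. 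The ratio of these two quantities tends to $(\kappa/\chi_0)\,V'(y)/u(y)$, which is not $1$ in general: $V'$ is the discrete harmonic function, $u$ is the réduite, and they only agree asymptotically as $\vert y\vert\to\infty$ (\cite[Lem.~13]{DeWa-15}), not at finite $y$. So your intermediate target ``the sum equals $n^{-d/2}\bar K(x/\sqrt n,y/\sqrt n)(1+o(1))$'' cannot hold for all $y\in K$, and dividing by $\pr(\tau_x>n)$ at the end would give a wrong statement. The paper avoids this by splitting $K$ into three regions ($y$ far from $x$, $y$ near $\partial K$, $y$ in the bulk) and, in the two boundary regions, only showing that \emph{both} $n^{d/2}\pr(\ldots)$ and $\bar K/\bar k$ are small (Lemmas \ref{estimate_region_1}, \ref{estimate_region_2} together with \eqref{bound_K_away}, \eqref{bound_K_edge}); the sharp asymptotics is only proved in the interior region (Lemma \ref{estimate_region_3}). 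You need that idea.

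There is a second, independent gap. You decompose at time $m=n-\ell$ with $\ell=\lfloor n^{1-\alpha}\rfloor = o(n)$, and want to approximate $\pr(x+S(m)\in C,\tau_x>m)$ for cubes $C$ of side $\sqrt\ell$ by applying Proposition \ref{uniformAway} ``cube by cube.'' But Proposition \ref{uniformAway} only covers sets of the form $\sqrt n(x+tD)$ with the scale parameter $t$ ranging in a \emph{fixed} interval $[s,1]$, $s>0$; your cubes correspond to $t\asymp\sqrt\ell/\sqrt n\asymp n^{-\alpha/2}\to 0$, which is outside its range. Indeed, the error terms in \eqref{inequlitiesKpD} are $O(n^{-r})+O(n^{-\gamma\alpha})$, and making them $o(\cdot)$ of a main term that now scales like $t^d$ with $t\to0$ requires extra work not done in the paper. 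Worse, passing from the set probability on a cube of side $\sqrt\ell$ to the point probability $\pr(x+S(m)=z,\tau_x>m)$ is itself a local limit theorem at the scale $\sqrt\ell$, which is exactly the kind of statement you are trying to prove; the argument is circular as stated. The paper sidesteps this by taking the second time block of macroscopic length $m=\lfloor\eta^3 n\rfloor$ (so $\Theta(n)$), applying Proposition \ref{uniformAway} at the fixed scale $t=\eta$, and then sending $\eta\to 0$ only \emph{after} $n\to\infty$.

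Finally, even where your scheme is closest to the paper's, the use of the H\"older continuity of $\bar K(x/\sqrt n,\cdot)$ from Theorem \ref{estimateK} gives an \emph{additive} modulus of continuity $C(1+\vert z\vert^{p-1})\vert z-z'\vert^\alpha$, which you cannot convert into a multiplicative $(1+o(1))$ near $\partial K$, where $\bar K$ itself is small. This is yet another reason the proof must treat the boundary region separately and only claim an additive error there. I recommend you restructure the proof along the three-region scheme of Lemmas \ref{estimate_region_1}--\ref{estimate_region_3}, keeping your decomposition at a macroscopic intermediate time $m=\lfloor\eta^3 n\rfloor$ and introducing the bulk region $K^{(3)}$ where $\dist(y,\partial K)>2\eta\sqrt n$ before attempting any precise matching.
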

\begin{itemize}
     \item The second step is a derivation of the more specific local limit theorem analogue to \cite[Thm~6]{DeWa-15}:
\end{itemize}     
\begin{proposition}\label{specificLLT}
Let $x\in {K}$. Then there exists $\kappa>0$ such that uniformly on $y\in{K}^{A}_{n,\epsilon}$, as $n\to\infty$,
\begin{equation*}\pr(x+S(n)=y,\tau_{x}>n)\sim \kappa \cdot V(x)\cdot n^{-d/2-p}\cdot u(y)\cdot\exp\left(-\frac{\vert y\vert^{2}}{2n}\right).\end{equation*}
\end{proposition}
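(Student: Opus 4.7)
The plan is to adapt the proof pattern of \cite[Thm~6]{DeWa-15} to the uniform setting, combining the strong Markov property at $t_{x,\epsilon}(n)$ with time reversal and Propositions~\ref{generalLLT}--\ref{uniformAway}. The fixed starting point $x$ (possibly close to $\partial K$) is handled as in \cite{DeWa-15} by letting the walk first escape the boundary via $t_{x,\epsilon}(n)$, while the uniformity in $y\in K_{n,\epsilon}^{A}$ will rely on applying our earlier uniform results to the time-reversed walk.

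First, I split the probability at the stopping time $t_{x,\epsilon}(n)$: the event $\{t_{x,\epsilon}(n)>n^{1-\epsilon}\}\cap\{\tau_{x}>n\}$ has probability at most $\exp(-Cn^{\epsilon})$ by~\eqref{bound_stopping_time}, negligible compared to the expected scaling $n^{-d/2-p}$. On the complementary event, the strong Markov property at $t_{x,\epsilon}(n)$ yields
\begin{equation*}
\pr(x+S(n)=y,\tau_{x}>n)=\e\bigl[\ind_{t_{x,\epsilon}(n)\le n^{1-\epsilon}\wedge\tau_{x}}\,\Psi(x_{\epsilon}(n),y;n-t_{x,\epsilon}(n))\bigr]+o(n^{-d/2-p}),
\end{equation*}
where $\Psi(z,y;m):=\pr(z+S(m)=y,\tau_{z}>m)$ and $x_{\epsilon}(n)=x+S(t_{x,\epsilon}(n))\in K_{n,\epsilon}$. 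Fuk-Nagaev bounds (Appendix~\ref{Fuk-Nagaev-cone}) force $\vert x_{\epsilon}(n)\vert=O(n^{1/2-\epsilon/2})=o(\sqrt{n})$ with overwhelming probability on the good event.

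The heart of the argument is a uniform inner asymptotic: for $z\in K_{n,\epsilon}$ with $\vert z\vert=o(\sqrt{n})$, $m\in[n-n^{1-\epsilon},n]$ and $y\in K_{n,\epsilon}^{A}$,
\begin{equation*}
\Psi(z,y;m)\sim\chi_{0}\,n^{-d/2-p}\,u(z)\,u(y)\,\exp\bigl(-\vert y\vert^{2}/(2n)\bigr),
\end{equation*}
uniformly. To prove it, time-reverse: $\Psi(z,y;m)=\pr(y+S'(m)=z,\tau'_{y}>m)$. The reversed walk starts at $y\in K_{n,\epsilon}^{A}$, so applying Propositions~\ref{generalLLT} and~\ref{uniformAway} uniformly produces $\pr(y+S'(m)=z,\tau'_{y}>m)\sim K_{m}(y,z)$. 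In the regime $\vert z\vert=o(\sqrt{m})$, $\vert y\vert\le A\sqrt{m}$, formula~\eqref{heat_kernel_small_x} applies and gives $K_{m}(z,y)\sim\chi_{0}m^{-d/2-p}u(z)u(y)e^{-\vert y\vert^{2}/(2m)}$; since $n-m=O(n^{1-\epsilon})$, we may replace $m$ by $n$ in the asymptotic with negligible relative error. Plugging back and invoking~\eqref{alternative_definition_V(x)}, the expectation $\e[u(x_{\epsilon}(n));t_{x,\epsilon}(n)\le n^{1-\epsilon}\wedge\tau_{x}]$ converges to $V(x)$, giving the conclusion with $\kappa=\chi_{0}$.

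The main technical obstacle is the uniformity of the inner asymptotic near the boundary of $K_{n,\epsilon}^{A}$. The additive error $o(m^{-d/2})$ coming from Proposition~\ref{generalLLT} need not be small compared to the leading heat kernel $K_{m}(z,y)$, since the latter is of order $n^{-d/2-p}u(z)u(y)$ and both $u$-factors may degenerate at rate $n^{p/2-p\epsilon}$ when the rescaled points approach $\partial K$. Overcoming this requires either a multiplicative refinement of Proposition~\ref{generalLLT}, or a direct comparison using the coupling~\eqref{coupling_few_moments} combined with the Gaussian heat kernel estimates of Theorem~\ref{estimateK}; this comparison is the technical crux of the proof.
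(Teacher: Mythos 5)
Your approach---split at the stopping time $t_{x,\epsilon}(n)$, apply the strong Markov property, then estimate $\Psi(z,y;m)$ by time reversal---correctly reproduces the overall philosophy of \cite{DeWa-15}, but it stumbles on exactly the obstacle you flag at the end, and that obstacle is structural, not a patchable technicality. The paper's proof uses a genuinely different decomposition precisely to avoid it.

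To make the gap concrete: after time reversal you need $\pr(y+S'(m)=z,\tau'_y>m)$ for $y\in K_{n,\epsilon}^A$ and $z=x_\epsilon(n)$ with $|z|=o(\sqrt n)$ and $\dist(z,\partial K)\geq n^{1/2-\epsilon}$. Proposition~\ref{generalLLT} gives
$\pr(y+S'(m)=z,\tau'_y>m)=K_m(y,z)+o(m^{-d/2})\pr(\tau'_y>m)$
with a purely qualitative uniform $o(1)$. By Theorem~\ref{estimateK} and \eqref{lower_bound_ball_C} the main term $K_m(y,z)$ is comparable to $m^{-d/2}k_m(y)k_m(z)$, and since $z$ may sit only at distance $n^{1/2-\epsilon}$ from $\partial K$, the factor $k_m(z)$ can be as small as order $n^{-p\epsilon}$ (this is the best \eqref{lowerBoundk} gives). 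The additive error is $o(m^{-d/2}k_m(y))$, which is not $o(K_m(y,z))$. No choice of $\epsilon$ rescues this because the $o(1)$ in Proposition~\ref{generalLLT} carries no polynomial rate. Proving the needed multiplicative refinement of Proposition~\ref{generalLLT} would be essentially as hard as the present Proposition itself, so the route is circular.

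The paper circumvents this by cutting the trajectory \emph{in the middle} rather than at the boundary-escape time. It writes, with $m=\lfloor n/2\rfloor$,
\begin{equation*}
\pr(x+S(n)=y,\tau_x>n)=\sum_{z\in K}\pr(x+S(n-m)=z,\tau_x>n-m)\,\pr(y+S'(m)=z,\tau'_y>m),
\end{equation*}
truncates the sum at $|z|\le(A+B)\sqrt n$ (the tail is handled via \eqref{bound_local_probability_in_cone} and Proposition~\ref{uniformAway}), and on the bulk applies Proposition~\ref{generalLLT} to the reversed walk started at $y\in K_{n,\epsilon}^A$ and \cite[Thm~5]{DeWa-15} to the forward walk from the fixed $x$. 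After passing to a Riemann integral, the main term $\Sigma_2(B,n)$ is of order $n^{-p/2-d/2}\bar k(\sqrt 2\,y/\sqrt n)$: because the sum ranges over \emph{all} $z$ up to scale $\sqrt n$---not only over a small-$z$ region where $k_m(z)$ degenerates---this is exactly the order of the accumulated additive errors, so they are now negligible. The explicit constant and the factor $u(y)e^{-|y|^2/(2n)}$ are then extracted from the heat-kernel identity $\int_K u(w)K_{1/2}(y/\sqrt n,w)e^{-|w|^2}\,dw=2^{-p-d/2}u(y/\sqrt n)\,e^{-|y|^2/(2n)}$, obtained from \eqref{asymptotics_K_zero} and the symmetry and semigroup property of $K_t$. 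Your boundary-escape decomposition \emph{is} the right tool one level up, in the proof of Theorem~\ref{thm:asymp_probab_boundary}, where it is applied to $y$ and the present Proposition is invoked as a black box; but for the Proposition itself the mid-time cut is the device that makes the additive errors controllable.
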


Let us thus first begin with the general local limit theorem given in Proposition~\ref{generalLLT}.
The proof follows the one of \cite[Thm~5]{DeWa-15}, using in addition the heat kernel estimates; since the proof of \cite[Thm~5]{DeWa-15} is already quite technical, we choose to divide the proof of Proposition \ref{generalLLT} into three lemmas. As in \cite{DeWa-15}, we divide ${K}$ into three regions which depend on the choice of $x\in{K}_{n,\epsilon}^{A}$ and two positive parameters $M,\eta>0$ (the dependence does not appear in the notations):
\begin{itemize}
     \item ${K}^{(1)}:=\lbrace y\in{K}: \vert y-x\vert>M\sqrt{n}\rbrace$,
     \item ${K}^{(2)}:=\lbrace y\in{K}: \vert y-x\vert \leq M\sqrt{n},\,\dist(y,\partial {K})\leq2\eta\sqrt{n}\rbrace$, and
     \item ${K}^{(3)}:=\lbrace y\in{K}: \vert y-x\vert \leq M\sqrt{n},\,\dist(y,\partial {K})>2\eta\sqrt{n}\rbrace$.
\end{itemize}
We then estimate the probability $\pr(x+S(n)=y\vert \tau_{x}>n)$ for $y$ belonging to each one of these three regions.
\begin{lemma}\label{estimate_region_1}
One has \begin{equation*}\lim_{M\rightarrow\infty}\limsup_{n\rightarrow\infty}\sup_{x\in {K}_{n,\epsilon}^{A},\,y\in {K}^{(1)}}n^{d/2}\pr(x+S(n)=y\vert \tau_{x}>n)=0.\end{equation*}
\end{lemma}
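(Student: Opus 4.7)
The approach is to apply the Markov property at time $m=\lfloor n/2\rfloor$, combining the Gaussian decay in the unconstrained local limit theorem \eqref{bound_local_probability} with the Fuk-Nagaev inequality for random walks in a cone (Appendix \ref{Fuk-Nagaev-cone}), and then to control the resulting ratio $\pr(\tau_{x}>m)/\pr(\tau_{x}>n)$ via Corollary \ref{boundScalingExitTime}. Starting from
\[
\pr(x+S(n)=y,\tau_{x}>n)=\sum_{z\in {K}}\pr(x+S(m)=z,\tau_{x}>m)\,\pr(z+S(n-m)=y,\tau_{z}>n-m),
\]
I would drop the survival constraint in the second factor and fix an auxiliary parameter $R\in(0,M)$, splitting the $z$-sum into a \emph{far} piece $\{|z-y|>R\sqrt{n}\}$ and a \emph{near} piece $\{|z-y|\leq R\sqrt{n}\}$.

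For the far piece, \eqref{bound_local_probability} applied to a walk of length $n-m\geq n/2$ with parameter $u=R$ yields $\pr(z+S(n-m)=y)\leq Cn^{-d/2}\exp(-aR^{2})$ uniformly in such $z$, for $n$ large enough (depending only on $R$); after summing the first factor over $z$ one obtains $\Sigma_{\text{far}}\leq Cn^{-d/2}\exp(-aR^{2})\pr(\tau_{x}>m)$. For the near piece, I would combine the plain local bound $\pr(z+S(n-m)=y)\leq Cn^{-d/2}$ with the geometric remark that, since $y\in {K}^{(1)}$, the condition $|z-y|\leq R\sqrt{n}$ forces $|z-x|=|S(m)|\geq (M-R)\sqrt{n}$. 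Then the cone Fuk-Nagaev inequality from Appendix \ref{Fuk-Nagaev-cone}, in the form
\[
\pr\bigl(|S(m)|>(M-R)\sqrt{n},\tau_{x}>m\bigr)\leq C\,\psi(M-R)\,\pr(\tau_{x}>m)
\]
with some function $\psi(u)\to 0$ as $u\to\infty$, gives $\Sigma_{\text{near}}\leq Cn^{-d/2}\psi(M-R)\pr(\tau_{x}>m)$.

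Summing the two pieces, dividing by $\pr(\tau_{x}>n)$, and invoking Corollary \ref{boundScalingExitTime} with $s=1/2$ (which bounds $\pr(\tau_{x}>m)/\pr(\tau_{x}>n)$ uniformly in $x\in {K}_{n,\epsilon}^{A}$ for $n$ large), I obtain
\[
\sup_{x\in {K}_{n,\epsilon}^{A},\,y\in {K}^{(1)}} n^{d/2}\pr(x+S(n)=y\mid \tau_{x}>n)\leq C'\bigl(\exp(-aR^{2})+\psi(M-R)\bigr).
\]
Choosing $R=M/2$, letting $n\to\infty$ and then $M\to\infty$, both terms on the right tend to zero, yielding the desired conclusion.

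The main obstacle is the \emph{cone} Fuk-Nagaev inequality used in the near piece: the unconstrained Fuk-Nagaev is not enough, because $\pr(\tau_{x}>n)$ may be as small as $n^{-p\epsilon}$ on ${K}_{n,\epsilon}^{A}$ (as seen in \eqref{lowerBoundk} in the proof of Proposition \ref{uniformAway}), so dividing by it without the factor $\pr(\tau_{x}>m)$ already present on the right-hand side would destroy the decay. The interplay between this conditional Fuk-Nagaev bound and Corollary \ref{boundScalingExitTime} is precisely what delivers the uniformity in $x\in {K}_{n,\epsilon}^{A}$; the uniformity in $y\in {K}^{(1)}$ is automatic since the final bound depends only on $M$ and $R$.
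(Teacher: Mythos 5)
Your structure --- Markov property at $m=\lfloor n/2\rfloor$, a near/far split relative to $y$, Gaussian decay from \eqref{bound_local_probability} for the far piece, and Corollary \ref{boundScalingExitTime} to control the exit-time ratio --- is the correct skeleton, and you correctly observe that uniformity in $x\in K_{n,\epsilon}^A$ forces a \emph{conditional} tail estimate rather than the bare Fuk-Nagaev inequality.

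However, there is a genuine gap: Appendix \ref{Fuk-Nagaev-cone} does not contain the bound
$\pr(|S(m)|>(M-R)\sqrt{n},\tau_x>m)\le C\psi(M-R)\,\pr(\tau_x>m)$
that your near piece rests on. Lemma \ref{Lemma_24_revisited} controls $S(x,n)^+=\sup_{\ell\le n^{1-\epsilon}}\{|S(\ell)|:\tau_x>\ell\}$ on the event $S(x,n)^+\ge n^{1/2-\epsilon/8}$; that is a short time window $\ell\le n^{1-\epsilon}$ and a displacement threshold $n^{1/2-\epsilon/8}=o(\sqrt n)$, and it produces a pure power $n^{-s+1}$ with \emph{no} $M$-dependent decay factor. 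It simply does not speak to the event $|S(\lfloor n/2\rfloor)|>(M-R)\sqrt n$. The unconstrained inequality \eqref{Fuk-Nagaev} fails for the reason you yourself identify (dividing by $\pr(\tau_x>n)\gtrsim n^{-p\epsilon}$), so neither tool in the appendix delivers $\psi(M-R)\to 0$.

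The conditional tail bound you need is true, but it comes from Proposition \ref{uniformAway} (uniform conditioned convergence to the heat kernel) combined with the Gaussian upper bound of Theorem \ref{estimateK}. Writing
$\pr(|S(m)|>M'\sqrt m,\tau_x>m)=\pr(\tau_x>m)-\pr(x+S(m)\in B(x,M'\sqrt m),\tau_x>m)$
and applying both parts of Proposition \ref{uniformAway} (note $K_{n,\epsilon}^A\subset K_{m,\epsilon}^{A\sqrt 2}$) shows that, uniformly in $x$ and up to an $o(1)$ term, the conditional probability equals $\int_{|z-x/\sqrt m|>M'}\bar K(x/\sqrt m,z)\,dz\big/\bar k(x/\sqrt m)$. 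By Theorem \ref{estimateK}, $\bar k\le 1$ and \eqref{lower_bound_ball_C}, one has $\bar K(w,z)\le C\bar k(w)\exp(-|w-z|^2/c_3)$, so the ratio is at most $C\int_{|u|>M'}\exp(-|u|^2/c_3)\,du$, which tends to zero as $M'\to\infty$ uniformly over $|x/\sqrt m|\le A\sqrt 2$. This is precisely the pattern the paper uses for the escape term $\Sigma_1(B,n)$ in the proof of Proposition \ref{specificLLT}. With this substitution in place of the Appendix-\ref{Fuk-Nagaev-cone} citation, your argument goes through.
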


\begin{lemma}\label{estimate_region_2}
For each $M>0$,
\begin{equation*}\lim_{\eta\rightarrow 0}\limsup_{n\rightarrow\infty}\sup_{x\in {K}_{n,\epsilon}^{A},\,y\in {K}^{(2)}}n^{d/2}\pr(x+S(n)=y\vert \tau_{x}>n)=0.\end{equation*}
\end{lemma}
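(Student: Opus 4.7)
The plan is to decompose the path at time $n/2$ by Chapman--Kolmogorov, reverse the second half, and isolate a factor proportional to the survival probability of the reversed walk from $y$, which is small because $y$ is close to $\partial K$.

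\textbf{Step 1 (decomposition and reversal).} Set $n_1=\lfloor n/2\rfloor$ and $n_2=\lceil n/2\rceil$. Chapman--Kolmogorov together with time reversal of the second half gives
\[
\pr(x+S(n)=y,\tau_x>n)=\sum_z\pr(x+S(n_1)=z,\tau_x>n_1)\,\pr(y+S'(n_2)=z,\tau'_y>n_2).
\]
Taking the sup in $z$ on the first factor and summing the second to $\pr(\tau'_y>n_2)$ yields
\[
\pr(x+S(n)=y,\tau_x>n)\le\Bigl[\sup_z\pr(x+S(n_1)=z,\tau_x>n_1)\Bigr]\cdot\pr(\tau'_y>n_2).
\]

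\textbf{Step 2 (sharpening and scaling).} A second Chapman--Kolmogorov decomposition at time $m=\lfloor n/4\rfloor$, dropping the cone constraint on the (shorter) second portion and applying the unconstrained local limit bound \eqref{bound_local_probability} at $u=0$ (uniformly in the starting point by translation invariance), produces
\[
\sup_z\pr(x+S(n_1)=z,\tau_x>n_1)\le Cn^{-d/2}\pr(\tau_x>m).
\]
Corollary \ref{boundScalingExitTime} applied with $s=1/4$ gives $\pr(\tau_x>m)\le C\pr(\tau_x>n)$ uniformly on $x\in K_{n,\epsilon}^A$. Combining and dividing through by $\pr(\tau_x>n)$,
\[
n^{d/2}\pr(x+S(n)=y\mid\tau_x>n)\le C\pr(\tau'_y>n_2).
\]

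\textbf{Step 3 (boundary survival).} For $y\in K^{(2)}$, the Brownian coupling \eqref{coupling_few_moments}, applied to $S'$ exactly as in the proof of Proposition \ref{uniformAway}, yields $\pr(\tau'_y>n_2)\le k_{n_2}(y^+)+O(n^{-r})$ with $y^+=y+R_0x_0n^{1/2-\gamma}$. Since $|y^+|\le(M+A+1)\sqrt{n}$ and $\dist(y^+,\partial K)\le 2\eta\sqrt{n}+O(n^{1/2-\gamma})$, the upper bound $u(z)\le C|z|^{p-1}\dist(z,\partial K)$, Theorem \ref{estimatek}, and the uniform lower bound $u(z_1)\ge c>0$ coming from $\dist(z_1,\partial K)\ge c_0$ and \cite[Lem.~19]{DeWa-15} give $k_{n_2}(y^+)\le C(\eta+n^{-\gamma})$. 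Hence
\[
\limsup_{n\to\infty}\sup_{x\in K_{n,\epsilon}^A,\,y\in K^{(2)}}n^{d/2}\pr(x+S(n)=y\mid\tau_x>n)\le C\eta,
\]
which vanishes as $\eta\to 0$.

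\textbf{Main obstacle.} The delicate point is the sharp bound in Step 2, which requires using the cone constraint on both halves of the path. A naive bound that only exploits the constraint on one half (say, $\sup_z\pr(x+S(n_1)=z,\tau_x>n_1)\le Cn^{-d/2}$) would leave a ratio $\pr(\tau'_y>n_2)/\pr(\tau_x>n)$, which can fail to be small because for $x$ barely in $K_{n,\epsilon}^A$ the denominator is only of order $n^{-p\epsilon}$ while the numerator is of order $\eta$. Inserting the factor $\pr(\tau_x>m)$ in the sup bound and then invoking the scaling estimate of Corollary \ref{boundScalingExitTime} to cancel it against $\pr(\tau_x>n)$ is what makes the argument go through uniformly.
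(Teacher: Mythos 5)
Your proof is correct and matches the paper's approach: the paper delegates the argument ``word for word'' to \cite[Thm~5]{DeWa-15}, with Corollary~\ref{boundScalingExitTime} inserted to bound $\pr(\tau_x>m)/\pr(\tau_x>n)$ uniformly on $K_{n,\epsilon}^A$, and your decomposition at $n/2$ with time reversal, the secondary split at $\lfloor n/4\rfloor$ combined with \eqref{bound_local_probability}, and the coupling/heat-kernel bound on $\pr(\tau'_y>n_2)$ reproduce exactly that chain. Your ``Main obstacle'' remark correctly identifies why Corollary~\ref{boundScalingExitTime} is the new ingredient here relative to \cite{DeWa-15}, where $x$ is fixed.
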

\begin{lemma}\label{estimate_region_3}
For each $M>0$,
\begin{equation*}\lim_{\eta\rightarrow 0}\limsup_{n\rightarrow\infty}\sup_{x\in {K}_{n,\epsilon}^{A},\,y\in {K}^{(3)}}\left\vert n^{d/2}\pr(x+S(n)=y\vert \tau_{x}>n)-\frac{\bar{K}(x/\sqrt{n},y/\sqrt{n})}{\bar{k}(x/\sqrt{n})}\right\vert=0.\end{equation*}
\end{lemma}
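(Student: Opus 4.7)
The plan is to follow the scheme of \cite[Thm~5]{DeWa-15}, inserting the uniform Gaussian heat kernel estimates of Section~\ref{gaussian_estimates} wherever pointwise information was used there. The starting point is the Markov decomposition at $m=\lfloor n/2\rfloor$,
\begin{equation*}
\pr(x+S(n)=y,\tau_{x}>n)=\sum_{z\in K\cap\Lambda}\pr(x+S(m)=z,\tau_{x}>m)\,\pr(z+S(n-m)=y,\tau_{z}>n-m),
\end{equation*}
the goal being to show that this discrete convolution matches the Brownian semigroup identity $K_{n}(x,y)=\int_{K}K_{m}(x,z)K_{n-m}(z,y)\,dz$ up to a $(1+o(1))$ factor, and then to divide by $\pr(\tau_{x}>n)\sim k_{n}(x)=\bar k(x/\sqrt n)$ as supplied uniformly by Proposition~\ref{uniformAway}.

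First I would restrict the summation to a good set $G_{n}$ of lattice points $z$ with $|z-x|\leq L\sqrt{n}$ and $\dist(z,\partial K)\geq n^{1/2-\epsilon'}$, for parameters $L$ large and $0<\epsilon'<\epsilon$. The complement splits into two regimes: atypical $z$ with $|z-x|>L\sqrt{n}$, handled by the Fuk--Nagaev estimates of Appendix~\ref{Fuk-Nagaev-cone} applied to the first factor and a standard Gaussian tail bound on the second, contributing $o(K_{n}(x,y))$ once $L$ is large; and $z$ close to $\partial K$, where the upper heat kernel bound of Theorem~\ref{estimateK} together with the survival asymptotic of Proposition~\ref{uniformAway} and the volume lower bound \eqref{lower_bound_ball_C} show that the $(m,n-m)$-Brownian convolution integrated over the boundary strip is $o(K_{n}(x,y))$. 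The fact that $y\in K^{(3)}$ enters through the lower Gaussian bound in Theorem~\ref{estimateK}, which yields $K_{n}(x,y)\geq c_{\eta}\,n^{-d/2-p}\bar{k}(x/\sqrt{n})$; this is the benchmark against which all errors must be measured.

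On the good region $G_{n}$, both factors fall within the scope of Proposition~\ref{uniformAway}: the first factor satisfies $\pr(x+S(m)=z,\tau_{x}>m)\sim K_{m}(x,z)$ uniformly, and a parallel uniform local limit estimate for the second factor (derived from the coupling \eqref{coupling_few_moments} and the classical LLT \eqref{eq:LLT_not_constrained}, exploiting that the intermediate point $z$ is at mesoscopic distance from $\partial K$, so that the boundary conditioning is asymptotically inert on a time scale of order $n$) gives $\pr(z+S(n-m)=y,\tau_{z}>n-m)\sim K_{n-m}(z,y)$. Summing these two asymptotics over $G_{n}$ and approximating the resulting Riemann-type sum by the integral, using the H\"older continuity of $K_{t}(x,\cdot)/u(\cdot)$ provided by Theorem~\ref{estimateK}, reconstructs $K_{n}(x,y)$. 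Dividing by $\pr(\tau_{x}>n)\sim \bar{k}(x/\sqrt{n})$ then yields the claim uniformly on $x\in K_{n,\epsilon}^{A}$ and $y\in K^{(3)}$.

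The main difficulty is quantitative uniformity. Each $o(1)$ bound must be tracked uniformly in $(x,y)$ and compared to the benchmark $K_{n}(x,y)\geq c_{\eta}n^{-d/2-p}\bar{k}(x/\sqrt{n})$, which itself shrinks polynomially. Consequently the coupling error rate $n^{-r}$, the H\"older exponent $\alpha$ from Theorem~\ref{estimateK}, and the Fuk--Nagaev tails must all beat $n^{-d/2-p}$ after being multiplied by $\bar k(x/\sqrt n)^{-1}\leq n^{p\epsilon}$; this dictates the constraint $\epsilon<\min(r,\gamma\alpha)/(2p)$ already encountered in Proposition~\ref{uniformAway} and explains why the moment hypothesis must be strengthened to $\e[|X|^{p+2}]<\infty$. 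The role of the double limit $\lim_{\eta\to 0}\limsup_{n\to\infty}$ is precisely to absorb the $\eta$-dependence of the constant $c_{\eta}$: one verifies that the boundary-strip contribution vanishes faster than $c_{\eta}$ as $\eta\to 0$, so that the ratio of errors to the benchmark still tends to zero.
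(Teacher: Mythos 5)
The proposal does not follow the paper's argument, and the route you sketch has a genuine circularity that the paper's construction is specifically designed to avoid.

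You split the trajectory symmetrically at $m=\lfloor n/2\rfloor$ and then assert that on a good set $G_n$ of intermediate points $z$ (those with $|z-x|\leq L\sqrt n$ and $\dist(z,\partial K)\geq n^{1/2-\epsilon'}$), both $\pr(x+S(m)=z,\tau_x>m)\sim K_m(x,z)$ and $\pr(z+S(n-m)=y,\tau_z>n-m)\sim K_{n-m}(z,y)$ hold uniformly, citing Proposition~\ref{uniformAway} and a ``parallel uniform local limit estimate.'' But Proposition~\ref{uniformAway} gives only the \emph{distributional} (integrated) asymptotic $\pr(y+S(n)\in\sqrt n\,D,\tau_y>n)\sim\int_D\bar K(y/\sqrt n,z)\,dz$; it supplies no pointwise control of $\pr(x+S(m)=z,\tau_x>m)$. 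Upgrading that integral statement to a local one is precisely the content of the lemma you are trying to prove, so invoking it for both factors is circular. The claim that ``the boundary conditioning is asymptotically inert on a time scale of order $n$'' is also wrong for the symmetric split: starting at distance $n^{1/2-\epsilon'}$ from $\partial K$ and running for time $n-m\asymp n$ spreads the walk by order $\sqrt n\gg n^{1/2-\epsilon'}$, so the walk does feel the boundary and the constraint $\tau_z>n-m$ cannot be discarded.

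The paper dissolves this circularity with an \emph{asymmetric} split $m=\lfloor\eta^3 n\rfloor$ and a decomposition localized around the endpoint $y$ rather than around good intermediate points. Writing $K_1(y)=\{z:|z-y|<\eta\sqrt n\}$, one splits into $\Sigma_1$ (sum over $z\notin K_1(y)$, negligible by the tail bound \eqref{bound_local_probability} at scale $u=\eta^{-1/2}$), $\Sigma_3$ (sum over $z\in K_1(y)$ but with $\tau_z\leq m$, negligible since $y\in K^{(3)}$ keeps the cluster $K_1(y)$ at distance $\geq\eta\sqrt n\gg\sqrt m=\eta^{3/2}\sqrt n$ from $\partial K$), and $\Sigma_2$, where the constraint $\tau_z>m$ has been \emph{dropped} so the classical unconstrained LLT applies to $\pr(z+S(m)=y)$. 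That Gaussian bump is then paired against the distribution of $x+S(n-m)$ conditioned on survival, and Proposition~\ref{uniformAway} in its integrated form (upgraded to Lipschitz test functions via \cite[Thm~8.3.2]{Bo-07}) supplies exactly the right convergence — no pointwise LLT is ever presupposed. The small parameter $\eta$ plays the double role of making the Gaussian kernel narrow (so it concentrates the Brownian kernel at a point, using the H\"older continuity of $\bar K(x,\cdot)/\bar k(x)$) and of keeping the cluster away from $\partial K$. Your Fuk–Nagaev control of the far tails and your benchmark $K_n(x,y)\geq c_\eta n^{-d/2-p}\bar k(x/\sqrt n)$ are sensible ingredients, but without a smoothing device of the $\Sigma_2$ type there is no non-circular way to extract the pointwise asymptotic from the integrated one.
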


The proofs of Lemmas \ref{estimate_region_1} and \ref{estimate_region_2} follow word for word from the corresponding proofs in the proof of \cite[Thm~5]{DeWa-15}, using Corollary \ref{boundScalingExitTime} to bound uniformly $\frac{\pr(\tau_{x}>m)}{\pr(\tau_{x}>n)}$ for $x\in{K}_{n,\epsilon}^{A}$ and $m\geq sn$. The rest of the subsection is devoted to the proofs of Propositions \ref{generalLLT} and \ref{specificLLT}, as well as of Lemma \ref{estimate_region_3}. 

\begin{proof}[Proof of Lemma \ref{estimate_region_3}]
Set $m=\lfloor \eta^{3}n\rfloor$ and for $y\in{K}^{(3)}$, set 
\begin{equation*}
     {K}_{1}(y)=\lbrace z\in {K}:\vert z-y\vert <\eta \sqrt{n}\rbrace.
\end{equation*}
Then, we can write 
\begin{align*}
&n^{d/2}\pr(x+S(n)=y,\tau_{x}>n)\\
&\quad=\frac{n^{d/2}}{\pr(\tau_{x}>n)}\Biggl(\sum_{z\in {K}\backslash {K}_{1}(y)}\pr(x+S(n-m)=z,\tau_{x}>n-m)\pr(z+S(m)=y,\tau_{z}>m)\\
&\qquad+\sum_{z\in {K}_{1}(y)}\pr(x+S(n-m)=z,\tau_{x}>n-m)\pr(z+S(m)=y)\\
&\qquad-\sum_{z\in {K}_{1}(y)}\pr(x+S(n-m)=z,\tau_{x}>n-m)\pr(z+S(m)=y,\tau_{z}\leq m)\Biggr)\\
&\quad :=\frac{n^{d/2}}{\pr(\tau_{x}>n)}(\Sigma_{1}+\Sigma_{2}-\Sigma_{3}).
\end{align*}
By \eqref{bound_local_probability} with $u=\eta^{-1/2}$, there exist constants $C,a$ independent of $y$ such that for $z\not\in {K}_{1}(y)$ (i.e., for $z\in{\bf R}^{d}$ such that $\vert z-y\vert\geq \eta\sqrt{n}\geq \eta^{-1/2}\sqrt{m}$),
\begin{equation*}\pr(z+S(m)=y,\tau_{z}>m)\leq \pr(z+S(m)=y)\leq Cm^{-d/2}\exp(-a/\eta).\end{equation*}
Thus
\begin{equation*}\frac{n^{d/2}\Sigma_{1}}{\pr(\tau_{x}>n)}\leq C\frac{\pr(\tau_{x}>n-m)}{\pr(\tau_{x}>n)}\eta^{-3d/2}\exp(-a/\eta).\end{equation*}
Since $n-m\geq n/2$ for $\eta<1/2$, by Corollary \ref{boundScalingExitTime} there exists a constant $C'$ independent of $0<\eta<1/2$ such that $\limsup_{n\rightarrow \infty}\sup_{x\in{K}_{n,\epsilon}^{A}}\frac{\pr(\tau_{x}>n-m)}{\pr(\tau_{x}>n)}\leq C',$
which implies
\begin{equation*}\lim_{\eta\rightarrow 0}\limsup_{n\rightarrow \infty}\sup_{x\in{K}_{n,\epsilon}^{A}} C\frac{\pr(\tau_{x}>n-m)}{\pr(\tau_{x}>n)}\eta^{-3d/2}\exp(-a/\eta)\leq C'C\lim_{\eta\rightarrow 0}\eta^{-3d/2}\exp(-a/\eta)=0.\end{equation*}
Hence,
\begin{equation}\label{boundOutsideMiniDisk}\lim_{\eta\rightarrow 0}\limsup_{n\rightarrow \infty}\sup_{y\in{K}^{(3)},\,x\in{K}_{n,\epsilon}^{A}}\frac{n^{d/2}}{\pr(\tau_{x}>n)}\Sigma_{1}=0.
\end{equation}
Likewise, by \cite[Eq.~(76)]{DeWa-15}, there exist constants $a,C$ such that for all $y\in{K}^{(3)}$ and $z\in {K}_{1}(y)$,
\begin{equation*}\pr(z+S(m)=y,\tau_{z}\leq m)\leq Cm^{-d/2}\exp(-a/\eta).\end{equation*}
Therefore, 
\begin{equation*}\frac{n^{d/2}}{\pr(\tau_{x}>n)}\Sigma_{3}\leq C\frac{\pr(\tau_{x}>n-m)}{\pr(\tau_{x}>n)}\eta^{-3d/2}\exp(-a/\eta).\end{equation*}
Applying the same method as for $\Sigma_{1}$ yields 
\begin{equation}\label{boundDirectBorder}\lim_{\eta\rightarrow 0}\limsup_{n\rightarrow \infty}\sup_{y\in{K}^{(3)},\,x\in{K}_{n,\epsilon}^{A}}\frac{n^{d/2}}{\pr(\tau_{x}>n)}\Sigma_{3}=0.
\end{equation}
We now estimate the term $\Sigma_{2}$, which will eventually give the main contribution to the probability $n^{d/2}\pr(x+S(n)=y,\tau_{x}>n)$. By the regular local limit theorem \cite[Prop.~7.9]{Sp-76},
\begin{equation*}
     \sup_{y,z\in{K}}\left\vert (2\pi m)^{d/2}\pr(z+S(m)=y)-\exp\left(-\frac{\vert y-z\vert^{2}}{2m}\right)\right\vert\xrightarrow[m\rightarrow \infty]{}0.\end{equation*}
Hence,
\begin{multline}
\label{expression_Sigma_2}
\frac{1}{\pr(\tau_{x}>n)}\sum_{z\in {K}_{1}(y)}\pr(x+S(n-m)=z,\tau_{x}>n-m)\pr(z+S(m)=y)\\
=\frac{1}{\pr(\tau_{x}>n)}\sum_{z\in {K}_{1}(y)}\pr(x+S(n-m)=z,\tau_{x}>n-m)(2\pi m)^{-d/2}\exp\left(-\frac{\vert y-z\vert^{2}}{2m}\right)\\
+\frac{\pr(\tau_{x}>n-m)}{\pr(\tau_{x}>n)}o(m^{-d/2}),
\end{multline}
where $o(m^{-d/2})$ is uniform on all $x\in{K}_{n,\epsilon}^{A}$ and $y\in{K}^{(3)}$. On the one hand, using Corollary \ref{boundScalingExitTime} yields
\begin{equation}\label{vanishing_little_o}
\limsup_{n\rightarrow \infty}\sup_{x\in{K}_{n,\epsilon}^{A},\,y\in{K}^{(3)}}n^{d/2}o(m^{-d/2})\frac{\pr(\tau_{x}>n-m)}{\pr(\tau_{x}>n)}=0.
\end{equation}
For $n$ large enough, 
\begin{equation*}
     \eta\leq \eta\frac{\sqrt{n}}{\sqrt{n-m}}\leq \frac{\eta}{\sqrt{1-\eta^{3}/2}}, 
\end{equation*}
and the function 
\begin{equation*}
     f_{n}(u)=\left(\frac{n}{2\pi m}\right)^{d/2}\exp\left(-\frac{(n-m)\vert u\vert^{2}}{2m}\right)
\end{equation*}
is uniformly bounded by $({2\pi\eta^{3}})^{-d/2}$ and is Lipschitz continuous with uniform Lipschitz constant $({2\pi\eta^{3}})^{-d/2}{\eta^{-3}}$. Applying the uniform convergence in law from Proposition~\ref{uniformAway} together with \cite[Thm~8.3.2]{Bo-07} to the set of functions $(f_{n})_{n\geq 1}$ and the set of disks of radius $\left(\eta\frac{\sqrt{n}}{\sqrt{n-m}}\right)_{n\geq 1}$ yields
\begin{multline*}
\sup_{\substack{x\in{K}_{n,\epsilon}^{A}\\y\in {K}^{(3)}}}\Bigg\vert\e\left[\mathbf{1}_{B\left(y/\sqrt{n-m},\eta\frac{n}{n-m}\right)}f_{n}\big((x+S(n-m))/\sqrt{n-m}\big) \vert \tau_{x}>n-m\right]\\
\qquad\qquad-\frac{1}{\bar{k}(x/\sqrt{n-m})}\int_{ B\left(y/\sqrt{n-m},\eta\frac{\sqrt{n}}{\sqrt{n-m}}\right)}\bar{K}(x/\sqrt{n-m},z)f_{n}(z)dz\Bigg\vert\xrightarrow[n\rightarrow \infty]{}0.
\end{multline*}
Hence, expanding the expectation in the latter equation, doing the change of variable $u\mapsto \frac{\sqrt{n-m}}{\sqrt{n}}u$ and finally using the scaling property of $K_{t}$ gives
\begin{multline}
\label{equivalent_main_part}
\sup_{\substack{x\in{K}_{n,\epsilon}^{A}\\y\in {K}^{(3)}}}\Bigg\vert\frac{n^{d/2}}{\pr(\tau_{x}>n)}\sum_{z\in {K}_{1}(y)}\pr(x+S(n-m)=z,\tau_{x}>n-m)(2\pi m)^{-d/2}\exp\left(-\frac{\vert y-z\vert^{2}}{2m}\right)\\
-\frac{n^{d}}{\bar{k}\bigl(\frac{x}{\sqrt{n-m}}\bigr)(2\pi(n-m)m)^{d/2}}\int_{\vert u\vert\leq \eta}K_{\frac{n-m}{n}}\left(\frac{x}{\sqrt{n}},\frac{y}{\sqrt{n}}+u\right)\exp\left(-\frac{n\vert u\vert^{2}}{2m}\right)du\Bigg\vert\xrightarrow[n\rightarrow \infty]{}0.
\end{multline}
Consider now the last integral above. As $n$ goes to infinity, $\frac{n^{2}}{m(n-m)}$ converges to $\frac{1}{\eta^{3}(1-\eta^{3})}$ and $\frac{n-m}{n}$ to $1-\eta^{3}$. Lemma \ref{equivalenceKernelTime} yields that 
\begin{multline*}
\frac{1}{\bar{k}\bigl(\frac{x}{\sqrt{n-m}}\bigr)}K_{\frac{n-m}{n}}\left(\frac{x}{\sqrt{n}},\frac{y}{\sqrt{n}}+u\right)\exp\left(-\frac{n\vert u\vert^{2}}{2m}\right)\\
\underset{n\rightarrow \infty}{\sim} \frac{1}{k_{1-\eta^{3}}(x/\sqrt{n})}K_{1-\eta^{3}}\left(\frac{x}{\sqrt{n}},\frac{y}{\sqrt{n}}+u\right)\exp\left(-\frac{\vert u\vert^{2}}{2\eta^{3}}\right)
\end{multline*}
uniformly on $x\in{K}_{n,\epsilon}^{A}$, $y\in {K}^{(3)}$ and $\vert u\vert\leq \eta$. Hence, using \eqref{expression_Sigma_2}, \eqref{vanishing_little_o} and \eqref{equivalent_main_part} yields
\begin{align*}
\limsup_{n\rightarrow \infty}&\sup_{\substack{x\in{K}_{n,\epsilon}^{A}\\ y\in{K}^{(3)}}}\Bigg\vert\frac{n^{d/2}}{\pr(\tau_{x}>n)}\Sigma_{2}\\
-& \frac{(2\pi\eta^{3})^{-d/2}}{(1-\eta^{3})^{d/2}k_{1-\eta^{3}}(x/\sqrt{n})}\int_{\vert u\vert\leq \eta}K_{1-\eta^{3}}\left(\frac{x}{\sqrt{n}},\frac{y}{\sqrt{n}}+u\right)\exp\left(-\frac{\vert u\vert^{2}}{2\eta^{3}}\right)du\Bigg\vert=0.
\end{align*}
Let us show an asymptotic formula for the second term of the latter equation as $\eta$ goes to zero. Let $\theta>0$. By Lemma \ref{equivalenceKernelTime}, there exists $\eta_{0}>0$ such that for all $\eta\leq \eta_{0}$, 
\begin{multline*}
     \frac{(1-\theta)}{\bar{k}(x/\sqrt{n})}\bar{K}\left(\frac{x}{\sqrt{n}},\frac{y}{\sqrt{n}}+u\right)\leq \frac{1}{k_{1-\eta^{3}}(x/\sqrt{n})} K_{1-\eta^{3}}\left(\frac{x}{\sqrt{n}},\frac{y}{\sqrt{n}}+u\right)\\\leq \frac{(1+\theta)}{\bar{k}(x/\sqrt{n})}\bar{K}\left(\frac{x}{\sqrt{n}},\frac{y}{\sqrt{n}}+u\right)
\end{multline*}
for all $x\in{K}_{n,\epsilon}^{A}$, $y\in{K}^{(3)}$, $\vert u\vert\leq\eta$ and $n\geq 1$. Moreover, it comes from Remark~\ref{globallyHolder} that $\frac{1}{\bar{k}(x)}\bar{K}(x,\cdot)$ is H\"older continuous on ${K}\cap B(0,M+\eta)$ with exponent $\alpha$ and some constant $C$ independent of $x\in{K}\cap B(0,A)$, and thus uniformly continuous with uniform continuous bound independent of $x\in{K}\cap B(0,A)$. Hence, since the measure with density 
\begin{equation*}
     \mathbf{1}_{\vert u\vert\leq \eta}(2\pi\eta^{3})^{-d/2}\exp\left(\frac{-\vert u\vert^{2}}{2\eta^{3}}\right)du
\end{equation*}
converges weakly to a Dirac at $0$ as $\eta$ goes to $0$, we have
\begin{multline*}
     \frac{(1-\eta^{3})(2\pi\eta^{3})^{-d/2}}{\bar{k}(x/\sqrt{n})}\int_{\vert u\vert\leq \eta}\bar{K}\left(\frac{x}{\sqrt{n}},\frac{y}{\sqrt{n}}+u\right)\exp\left(-\frac{\vert u\vert^{2}}{2\eta^{3}}\right)du\\\underset{\eta\rightarrow 0}{\sim}\frac{1}{\bar{k}(x/\sqrt{n})}\bar{K}\left(\frac{x}{\sqrt{n}},\frac{y}{\sqrt{n}}\right)
\end{multline*}
uniformly on all $n\geq 1$, $x\in{K}_{n,\epsilon}^{A}$ and $y\in{K}^{(3)}$. Hence, there exists $0<\eta_{1}<\eta_{0}$ such that for $\eta<\eta_{1}$,
\begin{align*}
\frac{(2\pi\eta^{3})^{-d/2}}{(1-\eta^{3})k_{1-\eta^{3}}(x/\sqrt{n})}&\int_{\vert u\vert\leq \eta}K_{1-\eta^{3}}\left(\frac{x}{\sqrt{n}},\frac{y}{\sqrt{n}}+u\right)\exp\left(-\frac{\vert u\vert^{2}}{2\eta^{3}}\right)du\\
&\leq (1+\theta) \frac{(2\pi\eta^{3})^{-d/2}}{\bar{k}(x/\sqrt{n})}\int_{\vert u\vert\leq \eta}\bar{K}\left(\frac{x}{\sqrt{n}},\frac{y}{\sqrt{n}}+u\right)\exp\left(-\frac{\vert u\vert^{2}}{2\eta^{3}}\right)du\\
&\leq (1+\theta)^{2}\frac{1}{\bar{k}(x/\sqrt{n})}\bar{K}\left(\frac{x}{\sqrt{n}},\frac{y}{\sqrt{n}}\right),
\end{align*}
and similarly,
\begin{multline*}
     \frac{(2\pi\eta^{3})^{-d/2}}{(1-\eta^{3})k_{1-\eta^{3}}(x/\sqrt{n})}\int_{\vert u\vert\leq \eta}K_{1-\eta^{3}}\left(\frac{x}{\sqrt{n}},\frac{y}{\sqrt{n}}+u\right)\exp\left(-\frac{\vert u\vert^{2}}{2\eta^{3}}\right)du\\\geq \frac{(1-\theta)^{2}}{\bar{k}(x/\sqrt{n})}\bar{K}\left(\frac{x}{\sqrt{n}},\frac{y}{\sqrt{n}}\right)
\end{multline*}
for all $n\geq 1$, $x\in{K}_{n,\epsilon}^{A}$ and $y\in{K}^{(3)}$. Thus, 
\begin{multline*}
     \lim_{\eta\rightarrow 0}\sup_{\substack{n\geq 1,\,x\in{K}_{n,\epsilon}^{A}\\y\in{K}^{(3)}}}\Bigg\vert \frac{(2\pi\eta^{3})^{-d/2}}{(1-\eta^{3})k_{1-\eta^{3}}(x/\sqrt{n})}\int_{\vert u\vert\leq \eta}K_{1-\eta^{3}}\left(\frac{x}{\sqrt{n}},\frac{y}{\sqrt{n}}+u\right)\exp\left(-\frac{\vert u\vert^{2}}{2\eta^{3}}\right)du\\
-\frac{1}{\bar{k}(x/\sqrt{n})}\bar{K}\left(\frac{x}{\sqrt{n}},\frac{y}{\sqrt{n}}\right)\Bigg\vert=0.
\end{multline*}
This yields finally
\begin{align*}
&\lim_{\eta\rightarrow 0}\limsup_{n\rightarrow \infty}\sup_{\substack{x\in{K}_{n,\epsilon}^{A}\\ y\in{K}^{(3)}}}\left\vert\frac{n^{d/2}}{\pr(\tau_{x}>n)}\Sigma_{2}-\frac{1}{\bar{k}(x/\sqrt{n})}\bar{K}\left(\frac{x}{\sqrt{n}},\frac{y}{\sqrt{n}}\right)\right\vert\\
&\leq\lim_{\eta\rightarrow 0}\limsup_{n\rightarrow \infty}\sup_{\substack{x\in{K}_{n,\epsilon}^{A}\\ y\in{K}^{(3)}}}\Bigg\vert\frac{n^{d/2}}{\pr(\tau_{x}>n)}\Sigma_{2}\\
&\qquad\qquad- \frac{(2\pi\eta^{3})^{-d/2}}{(1-\eta^{3})^{d/2}k_{1-\eta^{3}}(x/\sqrt{n})}\int_{\vert u\vert\leq \eta}K_{1-\eta^{3}}\left(\frac{x}{\sqrt{n}},\frac{y}{\sqrt{n}}+u\right)\exp\left(-\frac{\vert u\vert^{2}}{2\eta^{3}}\right)du\Bigg\vert\\
&+\lim_{\eta\rightarrow 0}\limsup_{n\rightarrow \infty}\sup_{\substack{x\in{K}_{n,\epsilon}^{A}\\y\in{K}^{(3)}}}\Bigg\vert\frac{(2\pi\eta^{3})^{-d/2}}{(1-\eta^{3})^{d/2}k_{1-\eta^{3}}(x/\sqrt{n})}\int_{\vert u\vert\leq \eta}K_{1-\eta^{3}}\left(\frac{x}{\sqrt{n}},\frac{y}{\sqrt{n}}+u\right)\exp\left(-\frac{\vert u\vert^{2}}{2\eta^{3}}\right)du\\
&\qquad\qquad-\frac{1}{\bar{k}(x/\sqrt{n})}\bar{K}\left(\frac{x}{\sqrt{n}},\frac{y}{\sqrt{n}}\right)\Bigg\vert=0.\\
\end{align*}
Combining the latter equation with \eqref{boundOutsideMiniDisk} and \eqref{boundDirectBorder}, we get
\begin{equation*}
     \lim_{\eta\rightarrow 0}\limsup_{n\rightarrow \infty}\sup_{y\in{K}^{(3)},\,x\in{K}_{n,\epsilon}^{A}}\left\vert n^{d/2}\pr(x+S(n)=y\vert \tau_{x}>n)-\frac{\bar{K}\left(x/\sqrt{n},y/\sqrt{n}\right)}{\bar{k}(x/\sqrt{n})}\right\vert=0.\qedhere
\end{equation*}
\end{proof}

With the help of Lemmas \ref{estimate_region_1}, \ref{estimate_region_2} and \ref{estimate_region_3}, we can achieve the proof of Proposition \ref{generalLLT}.
\begin{proof}[Proof of Proposition \ref{generalLLT}]
Note first that by Theorem \ref{estimateK}, we have
\begin{equation}
\label{bound_K_away}
     \sup_{\substack{x,y\in {K}\\ \vert y-x\vert \geq M}}\frac{\bar{K}(x,y)}{\bar{k}(x)}\xrightarrow[M\rightarrow \infty]{} 0.
\end{equation}
By Theorem \ref{estimatek}, $\bar{k}(y)\leq C_{1}\frac{u(y)}{u(y_{1})}$ for $y\in{K}$, and by \eqref{lower_bound_y_1}, there exists a constant $c>0$ such that $u(y_{1})>c$ for $y\in{K}$. Thus, there exists a constant $C$ such that $\bar{k}(y)\leq C u(y)$ for all $y\in{K}$. Hence, by \eqref{upper_bound_u} in Appendix \ref{appendix_heat_estimates}, there exists a constant $C'$ such that 
\begin{equation*}\bar{k}(y)\leq C'\dist(y,\partial {K})\vert y\vert^{p-1},\end{equation*}
which yields that for each $M>0$
\begin{equation*}\sup_{\substack{y\in {K},\,\vert y\vert \leq A+M\\ \dist(y,\partial {K})<\eta}}\bar{k}(y)\xrightarrow[\eta\rightarrow 0]{} 0.\end{equation*}
Therefore, Theorem \ref{estimateK} implies that for each $M>0$,
\begin{equation}\label{bound_K_edge}
\sup_{\substack{y\in {K},\,\vert x-y\vert \leq M\\ \dist(y,\partial {K})<\eta}}\frac{\bar{K}(x,y)}{\bar{k}(x)}\xrightarrow[\eta\rightarrow 0]{} 0,
\end{equation}
uniformly in $x\in {K}$, $\vert x\vert\leq A$. Let $\delta>0$. By Lemma \ref{estimate_region_1} and \eqref{bound_K_away}, there exists $M>0$ such that
\begin{equation*}
     \limsup_{n\rightarrow \infty}\sup_{\substack{x\in{K}_{n,\epsilon}^{A}\\y\in{K}^{(1)}}}n^{d/2}\pr(x+S(n)=y\vert \tau_{x}>n)\leq \delta
\end{equation*}
and
\begin{equation*}\sup_{\substack{x\in{K}_{n,\epsilon}^{A}\\y\in {K}^{(1)}}}\frac{\bar{K}(x/\sqrt{n},y/\sqrt{n})}{\bar{k}(x/\sqrt{n})}\leq\delta,\end{equation*}
where we recall that ${K}^{(1)}=\lbrace y\in{K}: \vert y-x\vert> M\sqrt{n}\rbrace$. Hence, for this value of $M$,
\begin{equation*}\limsup_{n\rightarrow \infty}\sup_{\substack{x\in{K}_{n,\epsilon}^{A}\\y\in{K}^{(1)}}}\bigg\vert n^{d/2}\pr(x+S(n)=y\vert \tau_{x}>n)- \frac{\bar{K}(x/\sqrt{n},y/\sqrt{n})}{\bar{k}(x/\sqrt{n})}\bigg\vert \leq 2\delta.\end{equation*}
Then, by Lemma \ref{estimate_region_2}, \eqref{bound_K_edge} and Lemma \ref{estimate_region_3}, there exists $\eta>0$ such that defining ${K}^{(2)}$ and ${K}^{(3)}$ with this value of $\eta$ and the value of $M$ chosen above gives 
\begin{equation*}\limsup_{n\rightarrow \infty}\sup_{\substack{x\in{K}_{n,\epsilon}^{A}\\y\in{K}^{(2)}}}n^{d/2}\pr(x+S(n)=y\vert \tau_{x}>n)\leq \delta,\qquad \sup_{\substack{x\in{K}_{n,\epsilon}^{A}\\y\in {K}^{(2)}}}\frac{\bar{K}(x/\sqrt{n},y/\sqrt{n})}{\bar{k}(x/\sqrt{n})}\leq\delta,\end{equation*}
and 
\begin{equation*}\limsup_{n\rightarrow \infty}\sup_{\substack{x\in{K}_{n,\epsilon}^{A}\\y\in{K}^{(3)}}}\bigg\vert n^{d/2}\pr(x+S(n)=y\vert \tau_{x}>n)- \frac{\bar{K}(x/\sqrt{n},y/\sqrt{n})}{\bar{k}(x/\sqrt{n})}\bigg\vert \leq\delta.\end{equation*}
Hence, for these values of $M$ and $\eta$, there exists $n_{0}$ such that for $n\geq n_{0}$
\begin{equation*}\bigg\vert n^{d/2}\pr(x+S(n)=y\vert \tau_{x}>n)- \frac{\bar{K}(x/\sqrt{n},y/\sqrt{n})}{\bar{k}(x/\sqrt{n})}\bigg\vert \leq 3\delta\end{equation*}
for all $x\in{K}_{n,\epsilon}^{A}$ and $y\in{K}^{(1)}\cup {K}^{(2)}\cup {K}^{(3)}={K}$.
\end{proof}

We now turn to the proof of Proposition \ref{specificLLT}, which gives the exact asymptotics of the probability $\pr(x+S(n)=y,\tau_{x}>n)$ for $y$ varying with $n$. Proposition \ref{specificLLT} is an extension of \cite[Thm~6]{DeWa-15}. We notice that the hypothesis of Proposition \ref{specificLLT} stating that $y$ has to remain in ${K}_{n,\epsilon}^{A}$ will be removed in the next subsection, where the boundary case will be considered.

\begin{proof}[Proof of Proposition \ref{specificLLT}]
Set $m=\lfloor n/2\rfloor$. Classically, 
\begin{equation*}
     \pr(x+S(n)=y,\tau_{x}>n)=\sum_{z\in{K}}\pr(x+S(n-m)=z,\tau_{x}>n-m)\pr(y+S'(m)=z,\tau'_{y}>m).
\end{equation*}
Let $B>0$. On the one hand, by \eqref{bound_local_probability_in_cone}, there exists a constant $C(x)$ such that
\begin{align*}
     \Sigma_{1}(B,n)&:=\sum_{z\in{K},\,\vert z\vert >(A+B)\sqrt{n}}\pr(x+S(n-m)=z,\tau_{x}>n-m)\pr(y+S'(m)=z,\tau'_{y}>m)\\
     &\leq C(x)2^{p/2+d/2}n^{-p/2-d/2}\pr(y+S'(m)>(B+A)\sqrt{n}, \tau'_{y}>m).
\end{align*}
Applying Proposition \ref{uniformAway} to the convex set $B(0,\sqrt{2}(A+B))$ and $x=0$ yields
\begin{align*}
\pr(y+S'(m)&>(B+A)\sqrt{n}, \tau'_{y}>m)\\&\quad=\pr(\tau'_y>m)-\pr(y+S'(m)\leq(B+A)\sqrt{n}, \tau'_{y}>m)\\
&\quad\underset{n\rightarrow \infty}{\sim}\bar{k}(y/\sqrt{m})-\int_{\vert w\vert\leq \sqrt{2}(A+B)}\bar{K}(y/\sqrt{m},w)dw\\
&\quad\underset{n\rightarrow \infty}{\sim}\int_{\vert w\vert> \sqrt{2}(A+B)}\bar{K}(y/\sqrt{m},w)dw,
\end{align*}
where we have used $\int_{K}\bar{K}(y/\sqrt{m},w)dw=\bar{k}(y/\sqrt{m})$ in the last equivalence. Hence,
\begin{equation*}\Sigma_{1}(B,n)\leq Cn^{-p/2-d/2}\int_{\vert w\vert> \sqrt{2}(A+B)}\bar{K}(y/\sqrt{m},w)dw\end{equation*}
for some constant $C>0$, and by Theorem \ref{estimateK} and Corollary \ref{boundScalingExitTime},
\begin{equation}
\label{errorTerm1}
     \lim_{B\rightarrow \infty}\lim_{n\rightarrow \infty}\sup_{y\in {K}_{n,\epsilon}^{A}}\frac{n^{p/2+d/2}}{\bar{k}(y/\sqrt{n})}\Sigma_{1}(B,n)\leq C' \lim_{B\rightarrow \infty}\lim_{n\rightarrow \infty}\sup_{y\in {K}_{n,\epsilon}^{A}}\frac{n^{p/2+d/2}}{\bar{k}(y/\sqrt{m})}\Sigma_{1}(B,n)=0.
\end{equation}
On the other hand, by applying Proposition \ref{generalLLT} to $S'(m)$ and \cite[Thm~5]{DeWa-15} to $S(n-m)$, we obtain that
\begin{align*}
&\sum_{z\in{K},\,\vert z\vert \leq(A+B)\sqrt{n}}\pr(x+S(n-m)=z,\tau_{x}>n-m)\pr(y+S'(m)=z,\tau'_{y}>m)\\
&=\kappa 2^{p+d/2}V(x)n^{-p/2-d}\sum_{z\in{K},\,\vert z\vert \leq (A+B)\sqrt{n}}u(\sqrt{2}z/\sqrt{n})\bar{K}(\sqrt{2}y/\sqrt{n},\sqrt{2}z/\sqrt{n})\exp\left(-\frac{\vert z\vert^{2}}{n}\right)+o(R_{n})\\
&:=\Sigma_{2}(B,n)+o(R_{n}),
\end{align*}
with $\kappa$ being a constant coming from \cite[Thm~5]{DeWa-15} and
\begin{equation*}R_{n}=\pr(\tau_{x}>n-m)n^{-d/2}\bar{k}(y/\sqrt{m})+(n-m)^{-p/2-d/2}k(y/\sqrt{m}).\end{equation*}
Using \eqref{asymptotic_survival_time}, we get
\begin{equation}\label{errorTerm2}
o(R_{n})=o(n^{-p/2-d/2}\bar{k}(y/\sqrt{m})).
\end{equation}
Moreover, as $n$ goes to infinity, by the H\"older continuity of $K(z,\cdot)$ on ${K}\cap B(0,A+B)$, we have the uniform convergence of the Riemann integral
\begin{multline*}
\sup_{y\in {K}_{n,\epsilon}^{A}}\Bigg\vert n^{-d/2}\sum_{z\in{K},\,\vert z\vert \leq (A+B)\sqrt{n}}u(\sqrt{2}z/\sqrt{n})\bar{K}(\sqrt{2}y/\sqrt{n},\sqrt{2}z/\sqrt{n})\exp\left(-\frac{\vert z\vert^{2}}{n}\right)-\\
2^{p/2}\int_{\vert w\vert \leq A+B}u(w)\bar{K}(\sqrt{2}y/\sqrt{n},\sqrt{2}w)e^{-\vert w\vert^{2}}dw\Bigg\vert\xrightarrow[n\rightarrow \infty]{}0,
\end{multline*}
which yields 
\begin{equation}
\label{mainTerm}
     \Sigma_{2}(B,n)\underset{n\rightarrow \infty}{\sim}\kappa 2^{p+d}V(x)n^{-p/2-d/2}\int_{\vert w\vert \leq A+B}u(w)\bar{K}(\sqrt{2}y/\sqrt{n},\sqrt{2}w)e^{-\vert w\vert^{2}}dw
\end{equation}
uniformly on $y\in{K}^{A}_{n,\epsilon}$. In particular, by \cref{estimateK}, there exist $C>0$ and $n_{0}\geq 1$ such that 
\begin{equation*}
     \Sigma_{2}(B,n)\geq C n^{-p/2-d/2}\bar{k}(\sqrt{2}y/\sqrt{n})
\end{equation*}
for all $n\geq n_{0}$ and $y\in{K}^{A}_{n,\epsilon}$. Thus, combining \eqref{errorTerm1}, \eqref{errorTerm2} and \eqref{mainTerm} yields
\begin{multline*}
     \pr(x+S(n)=y,\tau_{x}>n)\\\underset{n\rightarrow \infty}{\sim}\kappa 2^{p+d/2}V(x)n^{-p/2-d/2}\int_{w\in {K}}u(w)\bar{K}(\sqrt{2}y/\sqrt{n},\sqrt{2}w)e^{-\vert w\vert^{2}}dw.
\end{multline*}
It remains to compute the above integral. Note first that by homogeneity of $K$, 
\begin{equation*}
     \bar{K}(\sqrt{2}y/\sqrt{n},\sqrt{2}w)=K_{1/2}(y/\sqrt{n},w).
\end{equation*}
Moreover, by \cite[Lem.~18]{DeWa-15}, 
 \begin{equation}\label{asymptotics_K_zero}
 K_{t}(x,w)\underset{\substack{x\rightarrow 0\\\vert w\vert\leq \vert x\vert^{-1/2}}}{\sim} \kappa' u(x)u(w)\exp\left(-\frac{\vert w\vert ^{2}}{2t}\right)t^{-p-d/2}
 \end{equation}
for some constant $\kappa'$. Hence,
\begin{multline*}
     \int_{w\in {K}}u(w)K_{1/2}(y/\sqrt{n},w)e^{-\vert w\vert^{2}}dw\\=\lim_{x\rightarrow 0}\frac{1}{\kappa' u(x)2^{p+d/2}}\int_{w\in{K}}K_{1/2}(x,w)K_{1/2}(y/\sqrt{n},w)dw.
\end{multline*}
Since $K_{t}$ is symmetric and is a transition kernel,
\begin{equation*}\int_{w\in {K}}u(w)K_{1/2}(y/\sqrt{n},w)e^{-\vert w\vert^{2}}dw=\lim_{x\rightarrow 0}\frac{1}{\kappa' u(x)2^{p+d/2}}\bar{K}(x,y/\sqrt{n}).\end{equation*}
Finally, using \eqref{asymptotics_K_zero} again in the latter equality yields
\begin{equation*}\int_{w\in {K}}u(w)K_{1/2}(y/\sqrt{n},w)e^{-\vert w\vert^{2}}dw=u(y/\sqrt{n})2^{-p-d/2}\exp\left(-\frac{\vert y\vert^{2}}{2n}\right)\end{equation*}
and 
\begin{equation*}
     \pr(x+S(n)=y,\tau_{x}>n)\underset{n\rightarrow \infty}{\sim}\kappa V(x)n^{-p/2-d/2}u(y/\sqrt{n})\exp\left(-\frac{\vert y\vert^{2}}{2n}\right).\qedhere
\end{equation*}
\end{proof}

\subsection{Local limit theorem close to the boundary}
Let us first give a uniform lower bound on the survival probability for an asymptotically strongly irreducible random walk in a cone. Although this result will be applied to the reversed random walk $S'$ in the sequel, we state it in full generality for potential further applications.
\begin{lemma}\label{lowerBoundProbSurv}
Let $K$ be a convex cone with exponent $p$. Let $S$ be a random walk which is asymptotically strongly irreducible in $K$ (see \ref{H:strongly_irreducible}) and has increments with moments of order $p$ and $2+\epsilon$, $\epsilon>0$. Then there exists $c>0$ such that for all $z\in{K}\cap \Lambda$ with $\vert z\vert$ large enough and all $n\geq 1$,
\begin{equation*}\pr(\tau_{z}>n)\geq cn^{-p/2}.\end{equation*}
\end{lemma}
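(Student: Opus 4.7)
The plan is to combine the asymptotic strong irreducibility \ref{H:strongly_irreducible} with the heat-kernel estimates of Section~\ref{gaussian_estimates} and the Brownian motion coupling. The idea is to move the walk, in a bounded number of steps and with uniformly positive probability, from its (possibly boundary-close) starting point $z$ to a point $w$ lying at a uniformly positive distance from $\partial K$, and then to lower-bound the survival of the shifted walk.

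\textbf{Step 1: reduction to an interior starting point.} Iterating \ref{H:strongly_irreducible} and exploiting the convexity of $K$, which implies that $\dist(\cdot,\partial K)$ is positively homogeneous and superadditive on the cone, I construct an integer $N\ge 1$, a constant $p_{0}>0$, and a fixed lattice vector $a\in K\cap\Lambda$ with $\dist(a,\partial K)\ge c_{a}>0$ such that, for every $z\in K\cap\Lambda$ with $|z|$ large enough,
\[
\pr\bigl(z+S(k)\in K\text{ for all }1\le k\le N,\ z+S(N)=z+a\bigr)\;\ge\;p_{0},
\]
uniformly in $z$. Setting $w:=z+a$, superadditivity yields $\dist(w,\partial K)\ge\dist(a,\partial K)\ge c_{a}$, while $|w|$ stays of order $|z|$.

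\textbf{Step 2: survival from an interior point.} I then show $\pr(\tau_{w}>m)\ge c\,m^{-p/2}$ uniformly in such $w$ and in $m\ge 1$, in two regimes. When $|w|/\sqrt{m}$ stays bounded, \cite[Lem.~19]{DeWa-15} gives $u(w)\ge C c_{a}^{\,p}$, and Theorem~\ref{estimatek} together with Lemma~\ref{lowerBoundy1} yield
\[
\bar k(w/\sqrt m)\;\ge\;c\,\frac{u(w/\sqrt m)}{u((w/\sqrt m)_{1})}\;\ge\;c'\,m^{-p/2};
\]
the coupling \eqref{coupling_few_moments}, whose error becomes negligible compared to $m^{-p/2}$ thanks to the moment assumption $\e[|X|^{r}]<\infty$ for some $r>p+2$, transfers this bound to the random walk. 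In the complementary regime $|w|\gg\sqrt m$, the walk, up to time $m$, essentially evolves in a half-space at distance $\ge c_{a}$ from its boundary, in which the survival probability of a centered random walk is $\gtrsim c_{a}/\sqrt m$; since $p\ge 1$, this is still at least $c''\,m^{-p/2}$ for $m\ge 1$.

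\textbf{Step 3: assembly and main obstacle.} The Markov property at time $N$ yields
\[
\pr(\tau_{z}>n)\;\ge\;p_{0}\cdot\pr(\tau_{w}>n-N)\;\ge\;c\,n^{-p/2}
\]
for $n\ge 2N$, and bounded values of $n$ are absorbed by the uniform positivity of $\pr(\tau_{z}>n_{0})$ for $|z|$ large (a consequence of strong irreducibility and aperiodicity). The main difficulty lies in Step 1: assumption \ref{H:strongly_irreducible} only guarantees that the walk reaches \emph{some} point of $z+K$, while I need the stronger assertion that it reaches a prescribed interior point $z+a$ with uniform probability. This is obtained by iterating \ref{H:strongly_irreducible} several times and exploiting lattice considerations together with convexity of $K$ to force the cumulative displacement $v_{1}+\cdots+v_{N}$ to equal a fixed interior lattice element $a\in K\cap\Lambda$. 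A secondary technical point is the control of the coupling error in Step 2, which is exactly what motivates the strengthened moment hypothesis of the lemma.
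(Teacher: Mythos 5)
Your overall architecture (move to a boundary-distant point in a bounded number of steps with uniformly positive probability, then lower-bound survival from there) is the same as the paper's, but both steps as you have written them contain genuine gaps, and the second step in particular needs a different argument.

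\textbf{Step 2 fails with the stated moment hypothesis.} The coupling error in \eqref{coupling_few_moments} is of order $m^{2\gamma+\gamma\delta-\delta/2}$ with $\delta<1$; even optimizing $\gamma$ this is never better than $m^{-\delta/2}$ with $\delta<1$, hence never $o(m^{-p/2})$, since $p\geq 1$ for any convex cone. Your remark that the error is ``negligible compared to $m^{-p/2}$ thanks to the moment assumption $\e[\vert X\vert^{r}]<\infty$ for some $r>p+2$'' is not supported: the lemma assumes only moments of order $p$ and $2+\epsilon$, and in any case the exponent in \eqref{coupling_few_moments} is capped by $\delta<1$, so one cannot beat $m^{-p/2}$ this way. (This is exactly why Proposition \ref{uniformAway} only covers $y\in K^A_{n,\epsilon}$, where $\bar{k}(y/\sqrt{n})\geq c\,n^{-p\epsilon}$ dominates the coupling error.) Your fallback half-space comparison for $\vert w\vert\gg\sqrt{m}$ is also unjustified as stated: a convex cone need not contain a translated half-space through $w$, so ``essentially evolves in a half-space'' requires a comparison domain you have not exhibited. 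The paper avoids the coupling entirely here by a translation argument: fix $v$ interior with $V(tv)>0$ so that $\pr(\tau_{tv}>n)\geq cn^{-p/2}$ by \cite[Thm~1]{DeWa-15}, then for any $w$ with ``vertical distance'' $d_v(w)=\sup\{\lambda:w-\lambda v\in K\}>t$, convexity gives $w-tv+K\subset K$ and hence
\begin{equation*}
\pr(\tau_{w,K}>n)\;\geq\;\pr(\tau_{w,\,w-tv+K}>n)\;=\;\pr(\tau_{tv,K}>n)\;\geq\;cn^{-p/2}.
\end{equation*}
This is both uniform in $w$ and valid for all $n\geq 1$ under exactly the hypotheses of the lemma.

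\textbf{Step 1 cannot reach a prescribed point.} You yourself flag that \ref{H:strongly_irreducible} only guarantees reaching \emph{some} point of $z+K$, not a prescribed $z+a$, and the proposed fix (``iterating\ldots and exploiting lattice considerations\ldots to force the cumulative displacement to equal a fixed interior lattice element'') is not an argument. There is no reason the achievable displacements sum to a given $a$. The correct fix, which the paper uses, is to drop the fixed target and instead iterate until a quantitative interiority condition holds: each application of \ref{H:strongly_irreducible} moves to a lattice point of $z_i+K$ inside $B(z_i,R)$; because $\Lambda\cap K\cap B(0,R)$ is finite, each nonzero increment increases $d_v$ by at least $\delta:=\min\{d_v(w):w\in\Lambda\cap K\cap B(0,R)\}>0$ (via the inequality $d_v(y)\geq d_v(z)+\sup\{\lambda:y-\lambda v\in z+K\}$ for $y\in z+K$). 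After $\lfloor t/\delta\rfloor$ iterations one reaches a ($z$-dependent) point $z'$ with $d_v(z')\geq t$, a bounded path length, and a uniform lower bound $\eta^{\lfloor t/\delta\rfloor}$ on its probability. This ``gain a fixed amount of $d_v$ per iteration'' argument is what actually closes Step~1, and together with the translation comparison in Step~2 it yields the lemma with the stated (weak) moment hypothesis.
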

Let us first briefly explain the idea of the proof. By \cite[Thm~1]{DeWa-15}, we know that for any fixed point $v$ inside the cone $K$ and far enough from the boundary, then as $n$ goes to infinity $\mathbf{P}(\tau_{v}>n)\sim V(v)n^{-p/2}$, where $V(v)>0$. The goal is then to compare $\mathbf{P}(\tau_{z}>n)$ with $\mathbf{P}(\tau_v >n)$, where $v$ (resp.\ $z$) is as above (resp.\ as in the statement of Lemma \ref{lowerBoundProbSurv}). This comparison is done in two steps:
\begin{itemize}
     \item The first step is rather straightforward and consists in achieving the comparison for points $z$ which are also far enough from the boundary.
     \item The second step is to get the comparison for points $z$ arbitrary close to the boundary. The hypothesis of strong irreducibility is crucial for this second step, and it is the only place in the paper where this property is used.
\end{itemize}
Note also that the statement of this lemma and its proof could be adapted to the case of a $\mathcal C^2$ and star-shaped cone.
\begin{proof}
Let $v\in \Lambda$ be an arbitrary lattice point in the interior of ${K}$. By \cite[Thm~1]{DeWa-15} (see also \eqref{asymptotic_survival_time} in this paper), for any $z\in K$, $\mathbf{P}(\tau_{z}>n)\sim \chi V(z)n^{-p/2}$ as $n$ goes to infinity, where $V:K\to [0,\infty)$ is harmonic for the random walk $S$ killed outside of ${K}$. The map $V$ can vanish on $v$, but by \cite[Lem.~13 (a)]{DeWa-15}, there exists $t>0$ such that $V(tv)>0$. Hence, for this $t>0$, there exists $c>0$ such that for all $n\geq 1$,
\begin{equation}
\label{minimum_tv}
     \pr\left(\tau_{t v}>n\right)\geq cn^{-p/2}.
\end{equation}

We first give a similar lower bound for any point of $K$ far enough from the boundary (step 1 above). For $z\in{K}$, let us denote by $d_{v}(z)$ the vertical distance from $z$ to $\partial{K}$ parallel to $v$, namely,
\begin{equation*}
     d_{v}(z)=\sup\{\lambda>0: z-\lambda v\in K\}.
\end{equation*}
See Figure \ref{fig:explanation}. Since $K$ is open, one has $d_{v}(z)> 0$ for any $z\in {K}$ and $d_{v}(tv)=t$ for $t>0$. Moreover, if $y\in z+K$, then $y-d_{v}(z)v\in z-d_{v}(z)v+ K\subset K$, which yields
\begin{equation}\label{translation_v_distance}
d_{v}(y)\geq d_{v}(z)+\sup\{\lambda>0: y-\lambda v\in z+K\}.
\end{equation}
When $C$ is a cone distinct from $K$ containing $x$, denote by $\tau_{x,C}$ the exit time from $C$ for the random walk $x+S$. Suppose that $z\in{K}$ is such that $d_{v}(z)>t$. Since $K$ is convex, $z-tv\in {K}$ and thus $z-tv+{K}\subset K$. Moreover, the obvious equality $z=z-tv+tv$ implies that $z\in z-tv+K$. Hence \eqref{minimum_tv} yields
\begin{equation}
\label{lower_bound_far}
     \pr(\tau_{z}>n)\geq \pr\left(\tau_{z,z-tv+K}>n\right)\geq cn^{-p/2}
\end{equation}
for all $n\geq 1$, since by translation $\pr(\tau_{z,z-tv +K}>n)=\pr(\tau_{t v,K}>n)$.

Let us now deal with points $z$ which are close to the boundary of $K$ (step 2 above).
Let $z\in K\cap\Lambda$ with $\vert z\vert\geq R$, and let $\Gamma_{1}$ be a path in $B(z,R)\cap {K}$ from $z$ to $z+K$ with positive probability for $S$, whose existence is guaranteed by \ref{H:strongly_irreducible}. Let $n_{0}$ be the maximum number of points of the lattice inside $B(z,R)$ for $z\in{\bf R}^{d}$ and set 
\begin{equation*}
     \eta:=\min_{\substack{\gamma\subset B(0,R),\,l(\gamma)\leq n_{0}\\\pr(\gamma)>0}} \pr(\gamma),
\end{equation*}     
$l(\gamma)$ denoting the length of $\gamma$, where the probability is understood with respect to $S$. Since $\{\gamma\subset B(0,R):\,l(\gamma)\leq n_{0}, \mathbf{P}(\gamma)>0\}$ is finite and non-empty, $\eta>0$. Moreover, set 
\begin{equation*}
     \delta:=\min\{d_{v}(z): z\in \Lambda\cap K, \vert z\vert\leq R\}.
\end{equation*}
Note that $\delta>0$ since $K$ is open and $B(0,R)\cap K$ is finite. Since there are at most $n_{0}$ points of the lattice in $B(z,R)$ we may assume $l(\Gamma_{1})\leq n_{0}$, which yields $\pr(\Gamma_{1})>\eta$. Let $z_{1}$ be the endpoint of $\Gamma_{1}$. Since $z_{1}\in z+K\cap \Lambda$ and $z\in \Lambda$, then
\begin{equation*}
     \sup\{\lambda>0: z_{1}-\lambda v\in z+K\}\geq \delta.
\end{equation*}
Hence, \eqref{translation_v_distance} yields  
\begin{equation*}
     d_{v}(z_{1})>d_{v}(z)+\delta.
\end{equation*}
Repeating the operation for $z_{1},z_{2},\ldots$ at most $\lfloor \frac{t}{\delta}\rfloor$ times yields ultimately a path $\Gamma$ in $K$ from $z$ to $z'\in K$ of length $m$ such that $d_{v}(z')\geq t$ and $\pr(\Gamma)\geq \eta^{\lfloor \frac{t}{\delta}\rfloor}$ (see Figure \ref{fig:explanation} for a picture of the construction). 
\begin{figure}
\scalebox{0.5}{{
\fontsize{14pt}{11pt}\selectfont
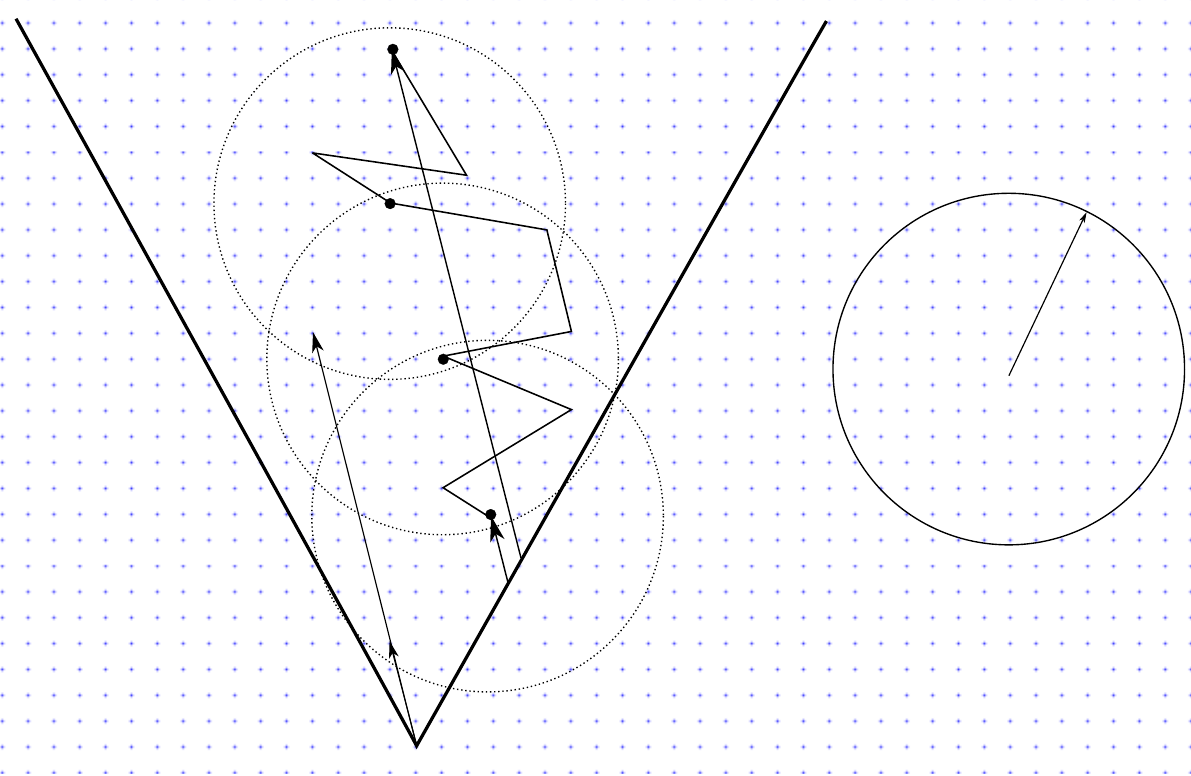}}
\caption{Construction of the path $\Gamma=\Gamma_{1}\cdot\Gamma_{2}\cdot\Gamma_3$.}
\label{fig:explanation}
\end{figure}
Then, the Markov property and \eqref{lower_bound_far} yield
\begin{equation*}
     \pr(\tau_{z}>n)\geq \eta^{\lfloor \frac{t}{\delta}\rfloor}\pr(\tau_{z'}>n-m)\geq\eta^{\lfloor \frac{t}{\delta}\rfloor} c(n-m)^{-p/2}\geq c'n^{-p/2},
\end{equation*}
for some constant $c'$ independent of $z$.
\end{proof}

We are now able to prove Theorem \ref{thm:asymp_probab_boundary}. Recall that $t'_{y,\epsilon}(n)$ denotes the first time that $\{y+S'(m)\}$ reaches ${K}_{n,\epsilon}$, see \eqref{eq:def_t_x_n}, and $y'_{\epsilon}(n):=y+S'(t'_{y,\epsilon}(n))$.

\begin{proof}[Proof of Theorem \ref{thm:asymp_probab_boundary}]
Suppose that $y$ goes to infinity with $y/\vert y\vert$ converging to $\sigma\in \partial \Sigma$. Since $\pr(x+S(n)=y,\tau_{x}>n)=\pr(y+S'(n)=x,\tau'_{y}>n)$, one has
\begin{align}
\nonumber\pr&(x+S(n)=y,\tau_{x}>n)\\&\label{eq:term1_3}=\pr(y+S'(n)=x,t'_{y,\epsilon}(n)>n^{1-\epsilon},\tau'_{y}>n)\\
&\label{eq:term2_3}+\pr(y+S'(n)=x,t'_{y,\epsilon}(n)\leq n^{1-\epsilon},\vert S'(t'_{y,\epsilon}(n))\vert>n^{1/2-\epsilon/8},\tau'_{y}>n)\\
&\label{eq:term3_3}+\pr(y+S'(n)=x,t'_{y,\epsilon}(n)\leq n^{1-\epsilon},\vert S'(t'_{y,\epsilon}(n))\vert\leq n^{1/2-\epsilon/8},\tau'_{y}>n).
\end{align}
By \eqref{bound_stopping_time}, the term \eqref{eq:term1_3} above satisfies
\begin{equation*}
     \pr(y+S'(n)=x,t'_{y,\epsilon}(n)> n^{1-\epsilon},\tau'_{y}>n)\leq \pr(t'_{y,\epsilon}(n)> n^{1-\epsilon},\tau'_{y}>n^{1-\epsilon})\leq \exp(-Cn^{\epsilon})
\end{equation*}
for some constant $C>0$. We now focus on \eqref{eq:term2_3}. Moreover, applying Lemma \ref{Lemma_24_revisited} with $\alpha=0$ and $s=r$ for some $r>p+2$ such that ${\bf E}(\vert X\vert^r)<+\infty$, we get
\begin{multline}
\pr(t'_{y,\epsilon}(n)\leq n^{1-\epsilon},\vert S'(t'_{y,\epsilon}(n))\vert>n^{1/2-\epsilon/8},\tau'_{y}>n)\\
\leq \pr(\sup_{1\leq \ell\leq n^{1-\epsilon}}\vert S'(\ell)\vert>n^{1/2-\epsilon/8},\tau'_{y}>\ell)\leq Cn^{-(r-2)/2}.\label{small_y'}
\end{multline}
Thus, \eqref{bound_local_probability_in_cone} yields 
\begin{align*}
\pr&(y+S'(n)=x,t'_{y,\epsilon}(n)\leq n^{1-\epsilon},\vert S(t'_{y,\epsilon}(n))\vert> n^{1/2-\epsilon/8},\tau'_{y}>n)\\
&\leq \pr(t'_{y,\epsilon}(n)\leq n^{1-\epsilon},\vert S'(t'_{y,\epsilon}(n))\vert>n^{1/2-\epsilon/8},\tau'_{y}>n)\sup_{\substack{z\in {K}\\ n-n^{1-\epsilon}\leq m\leq n}}\pr(x+S(m)=z,\tau_{x}>m)\\
&\leq C'n^{-(r-2)/2-p/2-d/2},
\end{align*}
for some constant $C'>0$. We now look at the term \eqref{eq:term3_3}.
By Proposition \ref{specificLLT} and the strong Markov property of $S'$,
\begin{align*}
\pr(y+S'(n)=x&,t'_{y,\epsilon}(n)\leq n^{1-\epsilon},\vert S(t'_{y,\epsilon}(n))\vert\leq n^{1/2-\epsilon/8},\tau'_{y}>n)\\
&=\e[\pr(x+S(n-t'_{y,\epsilon}(n))=y'_{\epsilon}(n);\tau_{x}>n-t'_{y,\epsilon}(n));\\&\hspace{3cm}t'_{y,\epsilon}(n)\leq n^{1-\epsilon},\vert S(t'_{y,\epsilon}(n))\vert\leq n^{1/2-\epsilon/8}, \tau'_{y}>t'_{y,\epsilon}(n)]\\
&\sim \kappa V(x)n^{-p-d/2}\e[u(y'_{\epsilon}(n))\exp(-\vert y\vert^{2}/(2n));t'_{y,\epsilon}(n)\leq n^{1-\epsilon},\\
&\hspace{5.53cm}\vert S(t'_{y,\epsilon}(n))\vert\leq n^{1/2-\epsilon/8}, \tau'_{y}>t'_{y,\epsilon}(n)]
\end{align*}
uniformly on $y\in{K}$ with $y\leq A\sqrt{n}$. Using the definition \eqref{eq:def_t_x_n} of $t'_{y,\epsilon}(n)$ and the lower bound $u(z)\geq c\dist(z,\partial{K})^{p}$ given by \cite[Lem.~19]{DeWa-15} (see also \eqref{lower_bound_y_1}), we get
\begin{equation*}u(y'_{\epsilon}(n))\exp(-\vert y\vert^{2}/(2n))\geq cn^{O(\epsilon)}\end{equation*}
on the event $\lbrace t'_{y,\epsilon}(n)\leq n^{1-\epsilon}, \tau'_{y}>t'_{y,\epsilon}(n),\vert S(t'_{y,\epsilon}(n))\vert\leq n^{1/2-\epsilon/8}\rbrace$. Hence,
there exists $c'$ such that for $n$ large enough,
\begin{multline*}
\pr(y+S'(n)=x,t'_{y,\epsilon}(n)\leq n^{1-\epsilon},\tau'_{y}>t'_{y,\epsilon}(n))\\
\geq c'n^{-p-d/2+O(\epsilon)}\pr(t'_{y,\epsilon}(n)\leq n^{1-\epsilon}, \tau'_{y}>t'_{y,\epsilon}(n),\vert S(t'_{y,\epsilon}(n))\vert\leq n^{1/2-\epsilon/8}).
\end{multline*}
By \eqref{bound_stopping_time} and \eqref{small_y'},
\begin{align*}
\pr(t'_{y,\epsilon}(n)\leq n^{1-\epsilon}, \tau'_{y}>t'_{y,\epsilon}(n),&\vert S(t'_{y,\epsilon}(n))\vert\leq n^{1/2-\epsilon/8})\\
&\geq \pr(t'_{y,\epsilon}(n)\leq n^{1-\epsilon}, \tau'_{y}>n^{1-\epsilon})-Cn^{-(r-2)/2}\\
&\geq \pr(\tau'_{y}>n^{1-\epsilon})-(\exp(-Cn^{\epsilon})-n^{-(r-2)/2}).
\end{align*}
Since $r>p+2$, applying Lemma \ref{lowerBoundProbSurv} yields that
\begin{equation*}
\pr(t'_{y,\epsilon}(n)\leq n^{1-\epsilon}, \tau'_{y}>t'_{y,\epsilon}(n), \vert S(t'_{y,\epsilon}(n))\vert\leq n^{1/2-\epsilon/8})
\geq c n^{-p/2(1-\epsilon)}.
\end{equation*} 
Hence,
\begin{equation}\label{lower_bound_local_proba}
\pr(y+S'(n)=x,t'_{y,\epsilon}(n)\leq n^{1-\epsilon},\tau'_{y}>n,\vert S(t'_{y,\epsilon}(n))\vert\leq n^{1/2-\epsilon/8})\geq cn^{-p-d/2+O(\epsilon)}
\end{equation}
for some constant $c$ independent of $y\in{K}$ with $\vert y\vert \leq A\sqrt{n}$. Therefore, since $r>p+2$, choosing $\epsilon>0$ small enough yields
\begin{equation*}
     O(\epsilon)<\frac{r-2}{2},
\end{equation*} 
and thus 
\begin{multline*}
     \pr(y+S'(n)=x,t'_{y,\epsilon}(n)\leq n^{1-\epsilon},\vert S(t'_{y,\epsilon}(n))\vert> n^{1/2-\epsilon/8},\tau'_{y}>n)\\
=o\bigl(\pr(y+S'(n)=x,t'_{y,\epsilon}(n)\leq n^{1-\epsilon},\tau'_{y}>n)\bigr).
\end{multline*}
Hence, finally
\begin{multline*}\pr(x+S(n)=y,\tau_{x}>n)
\sim\\\kappa V(x)n^{-p-d/2}\e[u(y'_{\epsilon}(n))\exp(-\vert y\vert^{2}/(2n));t'_{y,\epsilon}(n)\leq n^{1-\epsilon}, \tau'_{y}>t'_{y,\epsilon}(n),
\vert S(t'_{y,\epsilon}(n))\vert\leq n^{1/2-\epsilon/8}].
\end{multline*}
We have again 
\begin{align*}
\e[u(y'_{\epsilon}(n))\exp(-\vert& y'_{\epsilon}(n)\vert^{2}/(2n)),t'_{y,\epsilon}(n)> n^{1-\epsilon}, \tau'_{y}>t'_{y,\epsilon}(n)]\\
&\leq n^{p/2}\sup_{y\in{K}}\left\vert u(y)\exp(-\vert y\vert^{2}/2)\right\vert \pr(t'_{y,\epsilon}(n)> n^{1-\epsilon}, \tau'_{y}>t'_{y,\epsilon}(n))\\
&\leq Cn^{p/2}\exp(-Cn^{\epsilon}),
\end{align*}
for some constant $C>0$, and by \eqref{small_y'}
\begin{align*}
\e[u(y'_{\epsilon}(n))&\exp(-\vert y'_{\epsilon}(n)\vert^{2}/(2n));t'_{y,\epsilon}(n)\leq n^{1-\epsilon}, \tau'_{y}>t'_{y,\epsilon}(n),
\vert S(t'_{y,\epsilon}(n))\vert\geq n^{1/2-\epsilon/8}]\\
&\leq n^{p/2}\limsup_{\substack{y\in{K}\\\dist(y,\sigma)=o(1)}}\left\vert u(y)\exp(-\vert y\vert^{2}/2)\right\vert \pr(t'_{y,\epsilon}(n)\leq n^{1-\epsilon}, \tau'_{y}>t_{y,\epsilon}(n),\vert S(t'_{y,\epsilon}(n))\vert\geq n^{1/2-\epsilon/8})\\
&\leq Cn^{(p+2-r)/2}
\end{align*}
for some constant $C>0$, with $r>p+2$. Hence, by \eqref{lower_bound_local_proba},
\begin{equation*}
     \pr(x+S(n)=y,\tau_{x}>n)\sim\kappa V(x)n^{-p-d/2}\e[u(y'_{\epsilon}(n))\exp(-\vert y\vert^{2}/(2n)); \tau'_{y}>t'_{y,\epsilon}(n)].\qedhere
\end{equation*}
\end{proof}

\appendix
\section{Regularity and estimates for the heat kernel in a cone}
\label{appendix_heat_estimates}

In this appendix, we prove several inequalities concerning the heat kernel in a cone. We start with the boundedness of the gradient of $u$.
\begin{lemma}\label{bound_gradient_u}
There exists $C>0$ such that 
\begin{equation*}\vert \nabla u(z)\vert\leq C\vert z\vert^{p-1}.\end{equation*}
\end{lemma}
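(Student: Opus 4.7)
The plan is to reduce the problem to a uniform bound on the spherical slice $\Sigma = K \cap {\bf S}^{d-1}$ via homogeneity, and then combine the classical interior gradient estimate for positive harmonic functions with an upper bound expressing the linear vanishing of $u$ at $\partial K$.

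First I would observe that differentiating $u(tz)=t^p u(z)$ in $z$ gives
\begin{equation*}
    \nabla u(tz) = t^{p-1}\nabla u(z),\qquad t>0,
\end{equation*}
so $\nabla u$ is homogeneous of degree $p-1$. Writing $z=|z|\omega$ with $\omega=z/|z|\in\Sigma$, this means that it suffices to prove
\begin{equation*}
    \sup_{\omega\in\Sigma}|\nabla u(\omega)|<\infty,
\end{equation*}
since then $|\nabla u(z)|=|z|^{p-1}|\nabla u(\omega)|\leq C|z|^{p-1}$ as required.

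Fix $\omega\in\Sigma$ and set $d:=\dist(\omega,\partial K)\in(0,1]$. Since $u$ is harmonic in the ball $B(\omega,d/2)\subset K$, the classical interior gradient estimate for harmonic functions (derived from the Poisson representation on $B(\omega,d/2)$) yields
\begin{equation*}
    |\nabla u(\omega)| \;\le\; \frac{C_{0}}{d}\,\sup_{y\in B(\omega,d/2)}u(y).
\end{equation*}
To convert the factor $1/d$ into a uniform bound, I would use the upper estimate $u(y)\leq C\,\dist(y,\partial K)\,|y|^{p-1}$ valid on the whole cone (this is the bound that the excerpt later refers to as \eqref{upper_bound_u}). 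For $y\in B(\omega,d/2)$ one has $|y|\le 3/2$ and $\dist(y,\partial K)\le \dist(\omega,\partial K)+d/2 \le 3d/2$, whence $\sup_{B(\omega,d/2)}u \le C' d$. Substituting back gives $|\nabla u(\omega)|\le C_{0}C'$, uniformly in $\omega\in\Sigma$, and homogeneity concludes the proof.

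The only non-routine input is the upper bound $u(y)\leq C\,\dist(y,\partial K)\,|y|^{p-1}$, and this is where the main technical effort lies. It is the quantitative form of the fact that a non-negative harmonic function on a convex (hence Lipschitz, hence inner uniform) cone vanishing on $\partial K$ decays at least linearly in the distance to the boundary. I would obtain it by combining the Hopf boundary principle—applied at each boundary point via a supporting hyperplane, exploiting convexity of $K$—with the homogeneity of $u$, which upgrades a bound on $\Sigma$ to the global scale-invariant form. Alternatively, one can invoke the boundary Harnack principle on inner uniform domains (as developed in \cite{GySal-11}) to compare $u$ with a model harmonic function on a half-space, whose linear boundary decay is explicit. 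Either route is a standard but non-trivial piece of elliptic boundary regularity for non-smooth domains.
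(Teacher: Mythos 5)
Your proof is correct and takes essentially the same route as the paper: both rely on the Varopoulos upper bound $u(y)\le C\,\dist(y,\partial K)\,|y|^{p-1}$ combined with the standard interior gradient estimate for harmonic functions on a ball of radius comparable to $\dist(\cdot,\partial K)$. The paper works directly at a general point $z$, applying the mean-value property to the harmonic function $\partial_v u$ and converting to a surface integral, whereas you first reduce to the sphere $\Sigma$ by homogeneity of $\nabla u$, but this is only a cosmetic reorganization of the same estimate.
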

\begin{proof}
If $\mathbf{S}$ is a sphere and $s\in \mathbf{S}$, denote by $n(s)$ the outward normal vector at $s$. Since ${K}$ is convex, by \cite[Eq.~(0.2.3)]{Va-99} there exists a constant $C'$ such that for all $z\in {K}$,
\begin{equation}\label{upper_bound_u}
u(z)\leq C'\dist(z,\partial{K})\vert z\vert^{p-1}.
\end{equation} 
Set $r=\dist(z,\partial {K})$. If $v$ is a unit vector in ${\bf R}^{d}$, $\partial_{v}u$ is harmonic and we have
\begin{align*}
\vert \partial_{v}u(z)\vert&=\frac{1}{\Vol(B(z,r))}\left\vert \int_{B(z,r)}\partial_{v}u(s)ds\right\vert\\
&=\frac{1}{\Vol(B(z,r))}\left\vert\int_{\partial B(z,r)}u(s)\langle v,n(s)\rangle ds\right\vert \\
&\leq \frac{\Vol(\partial B(z,r))}{\Vol(B(z,r))}C\vert z\vert^{p-1}r,
\end{align*}
for some positive constant $C$, where we have used \eqref{upper_bound_u} at the last line. Hence, since $\frac{\Vol(\partial B(z,r))}{\Vol(B(z,r))}=C\frac{1}{r}$ for some $C$, we have
\begin{equation*}\vert\partial_{v}u(z)\vert \leq C\vert z\vert^{p-1}\end{equation*}
for some constant $C>0$.
\end{proof}
The latter result yields in the next lemma the H\"older continuity of $u$. In the following statement, $u$ is said locally Lipschitz with local Lipschitz constant $A(z)$ at $z\in{K}$ if $u_{\vert B(z,1)}$ is Lipschitz with Lipschitz constant $A(z)$.
\begin{lemma}\label{Holder_u}
The function $u$ is locally Lipschitz at $z\in{K}$ with local Lipschitz constant $(1+\vert z\vert)^{p-1}$.
\end{lemma}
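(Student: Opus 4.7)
The plan is to reduce the statement to the pointwise gradient bound established in Lemma \ref{bound_gradient_u}, via the fundamental theorem of calculus applied along straight segments. The convexity assumption \ref{H:regularity_cone} on $K$ is what makes this reduction legal: for any two points $w,w' \in B(z,1)\cap K$, the segment $[w,w']$ lies entirely in $K$ (by convexity of $K$) and in $B(z,1)$ (by convexity of the ball), so it lies in the open set where $u$ is harmonic, hence $\mathcal{C}^{\infty}$, and Lemma \ref{bound_gradient_u} applies at every point of the segment.

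Concretely, I would fix $z \in K$ and $w,w' \in B(z,1)\cap K$, write
\begin{equation*}
     u(w') - u(w) \;=\; \int_0^1 \nabla u\bigl(w+t(w'-w)\bigr)\cdot (w'-w)\,dt,
\end{equation*}
and estimate the integrand by $|\nabla u(\zeta)|\,|w'-w|$ with $\zeta \in [w,w']$. Lemma \ref{bound_gradient_u} gives $|\nabla u(\zeta)| \leq C|\zeta|^{p-1}$, and for $\zeta \in B(z,1)$ one has $|\zeta| \leq |z|+1$. Since $p \geq 1$ (this was noted just above Lemma \ref{bound_gradient_u}), the function $t \mapsto t^{p-1}$ is non-decreasing on $[0,\infty)$, so $|\zeta|^{p-1} \leq (1+|z|)^{p-1}$. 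Combining these bounds yields
\begin{equation*}
     |u(w)-u(w')| \;\leq\; C\,(1+|z|)^{p-1}\,|w-w'|,
\end{equation*}
which is exactly the claimed local Lipschitz estimate (with the implicit multiplicative constant absorbed into the definition as in Lemma \ref{bound_gradient_u}).

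There is no real obstacle here: the content of the lemma is essentially just the passage from a pointwise gradient bound to a local Lipschitz bound, which is trivial on a convex set. The only conceptual point is to make sure the segment along which we integrate stays inside $K$, which is precisely guaranteed by hypothesis \ref{H:regularity_cone}; without convexity one would have to decompose the segment into pieces straddling $\partial K$ (using that $u$ vanishes on $\partial K$) and the argument would be slightly more delicate, but that situation does not arise in our setting.
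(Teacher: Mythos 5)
Your proof is correct and follows essentially the same route as the paper's: fix two points in $B(z,1)\cap K$, integrate $\nabla u$ along the connecting segment (which stays in $K$ by convexity), apply the gradient bound from Lemma \ref{bound_gradient_u}, and use monotonicity of $t\mapsto t^{p-1}$ for $p\geq 1$ to bound the integrand by $(1+\vert z\vert)^{p-1}$. The only cosmetic difference is that the paper first bounds the integral by $C'\max\{\vert y\vert,\vert y'\vert\}^{p-1}$ before specializing to $y,y'\in B(z,1)$, whereas you bound the integrand pointwise directly; both arguments are the same in substance.
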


\begin{proof}
Let $y,y'\in{K}$ with $\vert y-y'\vert \leq 2$. Then, by Lemma \ref{bound_gradient_u}, 
\begin{equation*}\left\vert u(y)-u(y')\right\vert=\left \vert \int_{0}^{1}\langle\nabla u(y'+t(y-y')),y-y'\rangle dt\right\vert  \leq \vert y'-y\vert C\int_{0}^{1}\vert y'+t(y-y')\vert^{p-1}dt.\end{equation*}
Since $K$ is convex, $p\geq 1$ and there exists a constant $C'$ such that $\int_{0}^{1}\vert y'+t(y-y')\vert^{p-1}dt\leq C'\max\{\vert y'\vert,\vert y\vert\}^{p-1}$, so that 
\begin{equation*}\vert u(y)-u(y')\vert \leq C\max\{\vert y'\vert,\vert y\vert\}^{p-1}\vert y-y'\vert\end{equation*}
for some positive constant $C$. For $y',y\in B(z,1)$, we have $\max\{\vert y'\vert,\vert y\vert\}\leq 1+\vert z\vert$, and thus
\begin{equation*}\vert u(y)-u(y')\vert \leq C(1+\vert z\vert)^{p-1}\vert y-y'\vert\end{equation*}
for some constant $C>0$ independent of $y,y'\in B(z,1)$.
\end{proof}

The following lemma gives the proof of \eqref{lower_bound_ball_C}.
\begin{lemma}\label{proof_lower_bound_ball_C}
There exists $c>0$ such that for all $t>0$,
\begin{equation*}
     \inf_{z\in{K}}V(z,\sqrt{t})>ct^{d/2}.
\end{equation*}
\end{lemma}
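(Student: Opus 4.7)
The plan is to reduce the claim to the case $t=1$ by scaling, and then to use convexity of $K$ to produce a uniform lower bound on $V(z,1)$.

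First, since $K$ is a cone (i.e., $\lambda K = K$ for all $\lambda>0$), the map $w \mapsto w/\sqrt{t}$ sends $B(z,\sqrt{t})\cap K$ bijectively onto $B(z/\sqrt{t},1)\cap K$, multiplying $d$-dimensional volume by $t^{-d/2}$. Therefore
\begin{equation*}
V(z,\sqrt{t}) = t^{d/2}\, V(z/\sqrt{t},1),
\end{equation*}
so it suffices to exhibit a constant $c>0$ such that $V(z,1)\geq c$ for every $z\in K$.

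The key observation is that convexity of $K$ together with the cone property implies $z+K\subset K$ for every $z\in K$. Indeed, if $z,w\in K$, then $\tfrac{1}{2}(z+w)\in K$ by convexity, and applying the cone property (multiplication by $2$) gives $z+w\in K$. Consequently,
\begin{equation*}
B(z,1)\cap K \;\supset\; B(z,1)\cap (z+K) \;=\; z + \bigl(B(0,1)\cap K\bigr),
\end{equation*}
and translation invariance of Lebesgue measure yields $V(z,1)\geq \Vol(B(0,1)\cap K)$.

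It remains to observe that $c_0 := \Vol(B(0,1)\cap K) > 0$. This holds because $K$ has non-empty interior (it is generated by the open subset $\Sigma$ of $\mathbf{S}^{d-1}$), so $B(0,1)\cap K$ contains an open set and thus has positive Lebesgue measure. Combining the three steps gives $V(z,\sqrt{t})\geq c_0\, t^{d/2}$ for all $z\in K$ and all $t>0$, which is the claim. There is no real obstacle here; the argument is just the combination of the cone scaling and the elementary inclusion $z+K\subset K$ valid for convex cones.
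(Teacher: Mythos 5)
Your argument is correct, and in fact it is exactly the observation the paper itself records as a remark at the end of its proof of this lemma: since $K$ is a convex cone, $z+K\subset K$ for all $z\in K$, so $B(z,\sqrt{t})\cap K \supset z+\bigl(B(0,\sqrt{t})\cap K\bigr)$ and one can take $c=V(0,1)$. The paper's \emph{main} argument is slightly different and more general: it writes $K$ as the epigraph of a Lipschitz function $\phi$ with constant $L$ over a hyperplane $H$ and observes that $x+z\in K$ whenever $\langle z,v\rangle\geq\sqrt{1-L^{-2}}\,\vert z\vert$ (with $v$ the inward normal to $H$), which immediately gives $V(x,\sqrt t)\geq\Vol\{\vert z\vert\leq\sqrt t,\ \langle z,v\rangle\geq\sqrt{1-L^{-2}}\,\vert z\vert\}>ct^{d/2}$. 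That version does not use convexity and would survive in a broader (Lipschitz but non-convex) setting, whereas your version crucially uses $z+K\subset K$, which is a genuinely convex-cone fact. Since the paper's hypothesis \ref{H:regularity_cone} assumes $K$ convex, your shorter route is perfectly adequate here; you just trade generality for brevity.
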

\begin{proof}
Write ${K}$ as the epigraph of a Lipschitz function $\phi:H\to{\bf R}$ with Lipschitz constant $L$, where $H$ is a hyperplane of ${\bf R}^{d}$. Let $v$ be the unit vector normal to $H$ pointing toward the epigraph of $\phi$; then one has $x+z\in{K}$ for all $x\in{K}$ and $z\in{\bf R}^{d}$ such that $\langle z,v\rangle \geq \sqrt{1-L^{-2}}\vert z\vert$. Therefore, for some positive constant $c$, one has
\begin{equation*}
     V(x,\sqrt{t})\geq \Vol \lbrace \vert z\vert \leq \sqrt{t},\langle z,v\rangle \geq \sqrt{1-L^{-2}}\vert z\vert\rbrace>ct^{d/2}. 
\end{equation*}
Note that if ${K}$ is convex, then $z+\left(B(0,\sqrt{t})\cap {K}\right)\subset{K}$ for all $z\in {K}$, so that we can simply choose $c=V(0,1)$. 
\end{proof}
From the Gaussian estimates, we can deduce the H\"{o}lder regularity of $\bar{k}$ and $\bar{K}$. In order to obtain this regularity, we first need to give a uniform bound for the ratio of $k_{t}$ and $k_{t'}$ for $t'\leq t$. This is done in the following lemma, using the fundamental volume doubling property of the r\'eduite $u$ (see \cite[Thm~4.19]{GySal-11}): there exists a constant $D>0$ such that 
\begin{equation}\label{doubling_volume_property}
\int_{B(x,2r)\cap {K}}u(z)^{2}dz\leq D\int_{B(x,r)\cap{K}}u(z)^{2}dz
\end{equation}
for all $x\in{K}$ and $r>0$.
\begin{lemma}\label{boundScalingk}
For all $t\in(0,1)$, there exists $c_{t}>0$ such that for all $x\in{K}$ and $s\in [t,1]$,
\begin{equation*}
     c_{t}\leq \frac{\bar{k}(sx)}{\bar{k}(x)}\leq 1.
\end{equation*}
\end{lemma}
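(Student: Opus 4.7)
The upper bound is the easy part and does not use the Gaussian estimates at all. By Brownian scaling together with the scale-invariance of the cone ($K = s^{-1}K$ for any $s>0$), the random variable $\tau^{\bm}_{sx}$ has the same distribution as $s^{2}\tau^{\bm}_{x}$, so that
\begin{equation*}
\bar{k}(sx)=\pr(\tau^{\bm}_{sx}>1)=\pr(\tau^{\bm}_{x}>1/s^{2})=k_{1/s^{2}}(x).
\end{equation*}
For $s\in[t,1]$ we have $1/s^{2}\geq 1$, and since $t\mapsto k_{t}(x)$ is non-increasing (a longer survival is a stronger event), this gives $\bar{k}(sx)\leq \bar{k}(x)$.

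For the lower bound I would use the two-sided estimate \eqref{estimateWithVolume} applied at times $1$ and $1/s^{2}$. Because $u(x)$ appears in both the estimate for $k_{1/s^{2}}(x)$ and the one for $k_{1}(x)$, it cancels in the ratio, leaving
\begin{equation*}
\frac{\bar{k}(sx)}{\bar{k}(x)}=\frac{k_{1/s^{2}}(x)}{k_{1}(x)}\geq \frac{c_{1}'}{C_{1}'}\sqrt{\frac{V(x,1/s)\int_{B(x,1)\cap K}u(z)^{2}\,dz}{V(x,1)\int_{B(x,1/s)\cap K}u(z)^{2}\,dz}}.
\end{equation*}
The ball of larger radius contains the ball of smaller radius, so $V(x,1/s)\geq V(x,1)$ trivially, and the first factor under the square root is $\geq 1$.

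The only nontrivial factor is then $\int_{B(x,1)\cap K}u^{2}/\int_{B(x,1/s)\cap K}u^{2}$, which has to be bounded below uniformly in $x\in K$ by a constant depending only on $t$. This is exactly where the volume doubling property \eqref{doubling_volume_property} for $u^{2}$ on $K$ intervenes. Since $s\in[t,1]$ one has $1/s\leq 1/t\leq 2^{N}$ with $N:=\lceil \log_{2}(1/t)\rceil$, so iterating \eqref{doubling_volume_property} exactly $N$ times yields
\begin{equation*}
\int_{B(x,1/s)\cap K}u(z)^{2}\,dz\leq \int_{B(x,2^{N})\cap K}u(z)^{2}\,dz\leq D^{N}\int_{B(x,1)\cap K}u(z)^{2}\,dz,
\end{equation*}
uniformly in $x\in K$. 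Plugging this back gives $\bar{k}(sx)/\bar{k}(x)\geq c_{t}$ with $c_{t}=(c_{1}'/C_{1}')D^{-N/2}$.

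I do not expect any real obstacle here: the argument is a straightforward combination of Brownian scaling, the two-sided estimate \eqref{estimateWithVolume}, and a finite iteration of the doubling property. The only point requiring a little care is the uniformity in $x$, which is built into both \eqref{estimateWithVolume} and \eqref{doubling_volume_property} as stated in the paper, so it comes for free.
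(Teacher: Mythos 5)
Your proof is correct. The upper bound via Brownian scaling and monotonicity of $t\mapsto k_t(x)$ is exactly what the paper does. For the lower bound you use the same two ingredients as the paper, namely the estimate \eqref{estimateWithVolume} and the volume doubling property \eqref{doubling_volume_property}, but you apply \eqref{estimateWithVolume} \emph{in the time variable}, comparing $k_{1/s^2}(x)$ to $k_1(x)$ with the spatial argument $x$ held fixed, whereas the paper applies it \emph{in the space variable}, comparing $k_1(rx)$ to $k_1(x)$ at fixed time $1$. Your choice is slightly more economical: because both estimates share the same $u(x)$, it cancels immediately, so you never need the homogeneity $u(rx)/u(x)=r^p$ nor the change of variables relating $\int_{B(rx,1)\cap K}u^2$ to $\int_{B(x,r^{-1})\cap K}u^2$ that the paper uses, and the volume ratio $V(x,1/s)/V(x,1)$ is trivially $\geq 1$ so the lower bound $V(z,1)\geq c$ of \eqref{lower_bound_ball_C} is also not needed. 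The core argument (doubling iterated $N=\lceil\log_2(1/t)\rceil$ times) is the same; you just package it more cleanly. The resulting constants differ slightly but both are valid.
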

\begin{proof}
Since $\bar{k}(sx)=k_{s^{-2}}(x)=\pr(\tau^{\bm}_{x}> s^{-2})$, $\bar{k}(sx)$ is increasing with $s$ and we only have to prove the bound for $s=t$. Let $n\geq 1$ be such that $0<2^{-n}\leq t$.
Let us prove the result for $r:=2^{-n}$, which implies the result for $t$. By \eqref{estimateWithVolume}, we have
\begin{equation*}c' \sqrt{\frac{V(rx,1)}{V(x,1)}}\sqrt{\frac{\int_{B(x,1)\cap {K}}u(z)^{2}dz}{\int_{B(rx,1)\cap {K}}u(z)^{2}dz}}\frac{u(rx)}{u(x)}\leq \frac{\bar{k}(rx)}{\bar{k}(x)} \leq 1\end{equation*}
for $c'=\frac{c'_{1}}{C'_{1}}$. 
By \eqref{lower_bound_ball_C} and the fact that $V(z,1)\leq \Vol(B(0,1))$ for all $z\in{K}$,  
\begin{equation*} \sqrt{\frac{V(rx,1)}{V(x,1)}}\geq \sqrt{\frac{c}{\Vol(B(0,1))}}.\end{equation*}
Moreover, by the scaling property of $u$,
\begin{equation*}\int_{B(rx,1)\cap {K}}u(z)^{2}dz=r^{d}\int_{B(x,r^{-1})\cap {K}}u(rz)^{2}dz=r^{d+2p}\int_{B(x,r^{-1})\cap {K}}u(z)^{2}dz,\end{equation*}
where we have used that $r^{-1}\left(B(rx,1)\cap {K}\right)=B(x,r^{-1})\cap {K}$ by the cone property. Using $n$ times the doubling volume property \eqref{doubling_volume_property} yields 
\begin{equation*}\int_{B(x,r^{-1})\cap {K}}u(z)^{2}dz=\int_{B(x,2^{n})\cap {K}}u(z)^{2}dz\leq D^{n}\int_{B(x,1)\cap {K}}u(z)^{2}dz,\end{equation*}
independently of $x\in {K}$. Therefore,
\begin{equation*}\int_{B(rx,1)\cap {K}}u(z)^{2}dz\leq r^{d+2p}D^{n}\int_{B(x,1)\cap {K}}u(z)^{2}dz,\end{equation*} 
and thus 
\begin{equation*}\sqrt{\frac{\int_{B(x,1)\cap {K}}u(z)^{2}dz}{\int_{B(rx,1)\cap {K}}u(z)^{2}dz}}\geq 2^{nd/2+np}D^{-n/2}.\end{equation*}
By the scaling property of $u$, $\frac{u(rx)}{u(x)}=r^p=2^{-np}$, which finally give
\begin{equation*}
     c'2^{nd/2}D^{-n/2}\sqrt{\frac{c}{\Vol(B(0,1))}}\leq \frac{\bar{k}(rx)}{\bar{k}(x)}\leq 1.\qedhere
\end{equation*}
\end{proof}
We say that a function $f$ is locally H\"{o}lder at $z\in{K}$ with exponent $\alpha$ and constant $C(z)$ if $\vert f(z')-f(z)\vert\leq C(z)\vert z'-z\vert^{\alpha}$ for $z'$ such that $\vert z-z'\vert\leq 1$. The function is said globally H\"{o}lder on $D\subset {K}$ with exponent $\alpha$ and constant $C$ if $\vert f(z')-f(z)\vert\leq C\vert z'-z\vert^{\alpha}$ for $z,z'\in{D}$.
\begin{proposition}\label{holderContinuity}
There exist $0< \alpha\leq 1$ and $C_{\alpha}>0$ such that $k$ and $\frac{1}{\bar{k}(x)}\bar{K}(x,\cdot)$  are locally H\"{o}lder at each $z$ in ${K}$ with exponent $\alpha$ and respective constants 
\begin{equation*}
     C_{\alpha}(1+\vert z\vert^{p-1})\quad \text{and}\quad C_{\alpha}(1+\vert z\vert^{p-1})\exp(-\vert z-x\vert^{2}/(2c_{3})).
\end{equation*}
By symmetry, $\bar{K}(\cdot,y)$ is also locally $\alpha$-H\"{o}lder at $z\in{K}$ with H\"{o}lder constant 
\begin{equation*}
     C_{\alpha}(1+\vert z\vert^{p-1})\exp(-\vert z-y\vert^{2}/(2c_{3})).
\end{equation*} 
\end{proposition}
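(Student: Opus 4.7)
The main input is the second estimate of Theorem~\ref{estimateK}, namely
\begin{equation*}
\left|\frac{\bar K(x,z)}{u(z)} - \frac{\bar K(x,z')}{u(z')}\right| \leq C_4 |z-z'|^\alpha \frac{K_2(x,z)}{u(z)}, \qquad |z-z'|\leq 1,
\end{equation*}
combined with the local Lipschitz behavior of the r\'eduite from Lemma~\ref{Holder_u}. I plan to establish the Hölder bound for $\bar K(x,\cdot)/\bar k(x)$ first, deduce the bound for $\bar k$ by integration against the heat kernel using symmetry, and then obtain the estimate for $\bar K(\cdot,y)$ by $\bar K(\cdot,y)=\bar K(y,\cdot)$ multiplied by the bounded factor $\bar k(y)\leq 1$.

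\textbf{Step 1: Hölder estimate for $\bar K(x,\cdot)/\bar k(x)$.} The starting point is the algebraic decomposition
\begin{equation*}
\bar K(x,z)-\bar K(x,z') = u(z)\!\left[\tfrac{\bar K(x,z)}{u(z)}-\tfrac{\bar K(x,z')}{u(z')}\right] + \tfrac{\bar K(x,z')}{u(z')}\bigl[u(z)-u(z')\bigr].
\end{equation*}
The first bracket is controlled directly by $C_4|z-z'|^\alpha K_2(x,z)/u(z)$. The Gaussian upper bound of Theorem~\ref{estimateK}, together with Lemma~\ref{boundScalingk} (which yields $k_2(x)\leq\bar k(x)$) and Lemma~\ref{proof_lower_bound_ball_C} (which gives $V(\cdot,\sqrt{2})\geq c>0$), produces $K_2(x,z)/\bar k(x)\leq C\exp(-|x-z|^2/(2c_3))$. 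For the second bracket, Lemma~\ref{Holder_u} gives $|u(z)-u(z')|\leq C(1+|z|)^{p-1}|z-z'|$ when $|z-z'|\leq 1$. The ratio $\bar K(x,z')/u(z')$ is controlled via the upper bound of Theorem~\ref{estimateK} together with Theorem~\ref{estimatek} and the uniform lower bound $u(z'_1)\geq c>0$ recalled in \eqref{lower_bound_y_1}, which give $\bar k(z')/u(z')\leq C$ and hence $\bar K(x,z')/(\bar k(x)u(z'))\leq C\exp(-|x-z'|^2/c_3)$.

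\textbf{Step 2: Reconciliation and passage to $\bar k$.} The only subtle point is that the second bracket yields $\exp(-|x-z'|^2/c_3)$ whereas the target has $\exp(-|x-z|^2/(2c_3))$. For $|z-z'|\leq 1$ the elementary inequality $|x-z'|^2 \geq |x-z|^2/2 - C$, uniform in $x\in K$, lets one absorb the discrepancy into a multiplicative constant. Combining the two brackets and using $|z-z'|\leq|z-z'|^\alpha$ for $|z-z'|\leq 1$ and $\alpha\in(0,1]$ yields the announced bound for $\bar K(x,\cdot)/\bar k(x)$. For $\bar k$, the symmetry of the heat kernel gives $\bar k(z)=\int_K \bar K(y,z)\,dy$, and integrating the bound of Step~1 (with the roles of the two arguments swapped, multiplied by $\bar k(y)\leq 1$) over $y\in K$ produces
\begin{equation*}
|\bar k(z)-\bar k(z')| \leq C_\alpha(1+|z|^{p-1})|z-z'|^\alpha \int_K \exp\!\bigl(-|y-z|^2/(2c_3)\bigr)\,dy,
\end{equation*}
where the last integral is uniformly bounded by $(2\pi c_3)^{d/2}$. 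The main obstacle is purely bookkeeping — juggling the Gaussian factors $\exp(-|x-z|^2/c_3)$ and $\exp(-|x-z'|^2/c_3)$ against the target exponent $|x-z|^2/(2c_3)$ — and no ingredient beyond the Gaussian estimates of Theorems~\ref{estimatek} and~\ref{estimateK} and the Lipschitz bound on $u$ from Lemma~\ref{Holder_u} is needed.
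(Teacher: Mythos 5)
Your proposal is correct and follows essentially the same route as the paper: the same algebraic decomposition $\bar K(x,z)-\bar K(x,z') = u(z)\bigl[\tfrac{\bar K(x,z)}{u(z)}-\tfrac{\bar K(x,z')}{u(z')}\bigr] + \tfrac{\bar K(x,z')}{u(z')}\bigl[u(z)-u(z')\bigr]$, the same inputs (the two estimates of Theorem~\ref{estimateK}, Theorem~\ref{estimatek}, Lemma~\ref{Holder_u}, Lemma~\ref{boundScalingk}, Lemma~\ref{proof_lower_bound_ball_C} and $u(z'_1)\geq c$), and the same integration-over-$y$ argument to deduce the bound for $\bar k$. The one place where you go further than the paper is in making explicit the elementary inequality $|x-z'|^2\geq |x-z|^2/2-1$ that reconciles the $\exp(-|x-z'|^2/c_3)$ factor with the target $\exp(-|x-z|^2/(2c_3))$; the paper absorbs this silently into the constant.
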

\begin{proof}
Let us start by the H\"{o}lder continuity of $\frac{1}{\bar{k}(x)}\bar{K}(x,\cdot)$. Let $x,y,y'\in{K}$ be such that $\vert y-y'\vert\leq 1$ and $\vert y'\vert \geq \vert y\vert$. By the second part of \cref{estimateK}, we have
\begin{equation*}\left\vert \frac{\bar{K}(x,y)}{u(y)}-\frac{\bar{K}(x,y')}{u(y')}\right\vert\leq C_{4}\vert y-y'\vert^{\alpha}\frac{K_{2}(x,y)}{u(y)}.\end{equation*}
Hence
\begin{equation*}\frac{1}{\bar{k}(x)}\left\vert \bar{K}(x,y)-\bar{K}(x,y')\right\vert\leq C_{4}\frac{1}{\bar{k}(x)}\vert y-y'\vert^{\alpha}K_{2}(x,y)+\frac{1}{\bar{k}(x)}\bar{K}(x,y')\left\vert\frac{u(y)}{u(y')}-1\right\vert .\end{equation*}
Using the first part of \cref{estimateK} and then \cref{estimatek} gives
\begin{equation*}\frac{1}{\bar{k}(x)}\bar{K}(x,y')\leq C_{2}\frac{\bar{k}(y')}{\sqrt{V(x,1)V(y',1)}}\exp(-\vert x-y'\vert^{2}/c_{3})\leq C\frac{u(y')}{u(y'_{1})}\exp(-\vert x-y'\vert^{2}/c_{3}),\end{equation*}
for some constant $C$, where we have also used in the last inequality that $V(z,1)\geq c$ for some positive constant $c$ independent of $z$ (see the previous proof for a proof of this fact). Likewise,
\begin{align*}
\frac{1}{\bar{k}(x)}K_{2}(x,y)&\leq C_{2}\frac{\bar{k}(x/\sqrt{2})}{\bar{k}(x)}\frac{\bar{k}(y/\sqrt{2})}{\sqrt{V(x,\sqrt{2})V(y,\sqrt{2})}}\exp(-\vert x-y\vert^{2}/(2c_{3}))\\
&\leq C\frac{\bar{k}(x/\sqrt{2})}{\bar{k}(x)}\exp(-\vert x-y\vert^{2}/(2c_{3}))\leq C\exp(-\vert x-y\vert^{2}/(2c_{3})),
\end{align*}
where we have used Lemma \ref{boundScalingk} in the last inequality. Thus
\begin{multline*}
     \frac{1}{\bar{k}(x)}\left\vert \bar{K}(x,y)-\bar{K}(x,y')\right\vert\\
     \leq C\vert y-y'\vert^{\alpha}\exp(-\vert x-y\vert^{2}/(2c_{3}))+C\frac{1}{u(y'_{1})}\exp(-\vert x-y'\vert^{2}/c_{3})\left\vert u(y)-u(y')\right\vert.
\end{multline*}
By Lemma \ref{Holder_u}, for $y,y'\in{K}$ such that $\vert y-y'\vert \leq 1$, we have for some constant $C'>0$
\begin{equation*}\vert u(y)-u(y')\vert\leq  C(1+\vert y'\vert)^{p-1}\vert y-y'\vert.\end{equation*}
Since $\dist(y'_{1},\partial {K})\geq c_{0}$ by definition, \cite[Lem.~19]{DeWa-15} yields that for some constant $c>0$ 
\begin{equation}
u(y'_{1})\geq cc_{0}^{p}.
\end{equation}
Hence, there exists $C^{(3)}$ such that 
\begin{equation*}\frac{1}{u(y'_{1})}\left\vert u(y)-u(y')\right\vert\leq C^{(3)} (1+\vert y'\vert)^{p-1}\vert y-y'\vert^{1}.\end{equation*}
Therefore, there exists $C^{(4)}>0$ such that
\begin{equation*}\frac{1}{\bar{k}(x)}\left\vert \bar{K}(x,y)-\bar{K}(x,y')\right\vert\leq C^{(4)}\vert y-y'\vert^{\alpha}(1+ \vert y'\vert^{p-1})\exp(-\vert x-y\vert^{2}/(2c_{3})),\end{equation*}
which proves the first part of the lemma. Since $\bar{k}$ is bounded by $1$, $\bar{K}(x,\cdot)$ is also locally $\alpha$-H\"{o}lder with the same constants, and by symmetry the same holds for $K(\cdot,y)$.
Therefore, for $x,x'\in{K}$ such that $\vert x-x'\vert\leq 1$,
\begin{align*}
\left\vert \bar{k}(x)-\bar{k}(x')\right\vert&\leq \int_{{K}}\vert \bar{K}(x,y)-\bar{K}(x',y)\vert dy\\
&\leq C^{(4)}(1+\vert x'\vert^{p-1})\vert x-x'\vert^{\alpha}\int_{{K}}\exp(-\vert x'-y\vert^{2}/(2c_{3}))dy\\
&\leq C_{\alpha}(1+\vert x'\vert^{p-1})\vert x-x'\vert^{\alpha}
\end{align*}
for some constant $C_{\alpha}\geq 0$.
\end{proof}
\begin{remark}\label{globallyHolder}
Since $\exp(-\vert z-y\vert^{2}/(2c_{3}))\leq 1$ for all $x,z\in{K}$, Proposition \ref{holderContinuity} implies that $\frac{1}{\Bar
k(x)}\bar{K}(x,\cdot)$ and $\bar{K}(\cdot,x)$ are globally H\"{o}lder on any bounded subset $D$ of ${K}$ with a H\"{o}lder constant $C(D)$ independent of $x\in{K}$.
\end{remark}
We end this section by a uniform (in time) estimate of the convergence of the heat kernel.
\begin{lemma}\label{equivalenceKernelTime}
Let $t_{0}>0$ and $M>0$. Uniformly in $t>t_{0}$ and $x,y\in{K}$ with $\vert y-x\vert\leq M$,
\begin{equation}
\label{eq:first_conv}
K_{t+h}(x,y)\sim_{h\rightarrow 0} K_{t}(x,y),
\end{equation}
and uniformly in $t>t_{0}$ and $x\in{K}$,
\begin{equation}
\label{eq:second_conv}
     k_{t+h}(x)\sim_{h\rightarrow 0}k_{t}(x).
\end{equation}
\end{lemma}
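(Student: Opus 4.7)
The plan is to derive both asymptotics from the time-derivative estimate of Theorem~\ref{estimateK} via the fundamental theorem of calculus, producing logarithmic bounds of the form $|\log K_{t+h}(x,y)-\log K_{t}(x,y)|\leq C\log((t+h)/t)$ (and analogously for $k$). The decisive observation is that when the derivative upper bound in Theorem~\ref{estimateK} is divided by the Gaussian lower bound on $K_{s}$, the prefactor $k_{s}(x)k_{s}(y)/\sqrt{V(x,\sqrt{s})V(y,\sqrt{s})}$ cancels entirely, leaving only explicit polynomial and exponential factors of the dimensionless quantity $|x-y|^{2}/s$ that stay bounded in the regime of interest.

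For the first convergence, I would restrict first to $|h|\leq t_{0}/2$, so that any $s$ between $t$ and $t+h$ satisfies $s\geq t_{0}/2$. Combining the derivative bound and the lower bound in Theorem~\ref{estimateK}, one obtains
\[\frac{|\partial_{s}K_{s}(x,y)|}{K_{s}(x,y)}\leq \frac{C_{5}}{c_{2}\,s}\Bigl(1+\tfrac{|x-y|^{2}}{s}\Bigr)^{\beta+1}\exp\!\Bigl(\tfrac{|x-y|^{2}}{s}\bigl(\tfrac{1}{C_{3}}-\tfrac{1}{4}\bigr)\Bigr).\]
Under the hypothesis $|x-y|\leq M$, the ratio $|x-y|^{2}/s$ is bounded by $2M^{2}/t_{0}$, so the right-hand side is dominated by $C^{\ast}/s$ with a constant $C^{\ast}=C^{\ast}(M,t_{0})$ independent of $t$, $x$, $y$. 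The fundamental theorem of calculus then gives $|\log K_{t+h}(x,y)-\log K_{t}(x,y)|\leq C^{\ast}|\log((t+h)/t)|\leq C^{\ast}|h|/t_{0}$, which tends to $0$ as $h\to 0$ uniformly on the claimed range, and \eqref{eq:first_conv} follows.

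For the second convergence, I would apply the same scheme to $k_{s}$. Differentiating $k_{s}(x)=\int_{K}K_{s}(x,y)\,dy$ under the integral sign (justified by the uniform domination established below), we have $\partial_{s}k_{s}(x)=\int_{K}\partial_{s}K_{s}(x,y)\,dy$. Inserting the derivative estimate, using the trivial bound $k_{s}(y)\leq 1$, the volume estimate $V(\cdot,\sqrt{s})\geq c\,s^{d/2}$ from Lemma~\ref{proof_lower_bound_ball_C}, and the change of variables $y=x+\sqrt{s}\,u$, one reduces $\int_{K}|\partial_{s}K_{s}(x,y)|\,dy$ to $k_{s}(x)/s$ multiplied by a finite Gaussian integral, giving
\[\int_{K}|\partial_{s}K_{s}(x,y)|\,dy\leq \frac{C\,k_{s}(x)}{s}\]
uniformly in $x\in K$ and $s\geq t_{0}/2$. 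Hence $|\partial_{s}\log k_{s}(x)|\leq C/s$, and integrating from $t$ to $t+h$ yields \eqref{eq:second_conv} exactly as in the first step.

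The main subtlety is obtaining uniformity when $x$ or $y$ is unbounded: the H\"older estimates of Proposition~\ref{holderContinuity} carry local constants of the form $C_{\alpha}(1+|z|^{p-1})$, which cannot be used directly to control the difference $K_{t+h}-K_{t}$ uniformly. The multiplicative structure of the derivative bound of Theorem~\ref{estimateK} is precisely what is needed so that the ``bad'' prefactors cancel against the lower bound on $K_{s}$ (respectively against the integral normalization defining $k_{s}$), producing a purely multiplicative control in $1/s$.
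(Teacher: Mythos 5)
Your proof is correct, and it is instructive to compare the two parts separately. For the first convergence~\eqref{eq:first_conv}, your route and the paper's are essentially the same idea organized differently: both combine the $\partial_{t}$ estimate in Theorem~\ref{estimateK} with the Gaussian lower bound on $K_{s}$ so that the common prefactor $k_{s}(x)k_{s}(y)/\sqrt{V(x,\sqrt{s})V(y,\sqrt{s})}$ disappears. You integrate $|\partial_{s}\log K_{s}(x,y)|\le C^{*}/s$, whereas the paper integrates $|\partial_{s}K_{s}|$ directly, uses monotonicity of $s\mapsto k_{s}$, and then compares volumes at times $t$ and $t+r$; the logarithmic formulation is cleaner (in particular it treats $h<0$ symmetrically, which the paper's final display $\bigl(1-t^{d/2}/(t+h)^{d/2}\bigr)$ glosses over), but the content is the same.

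For the second convergence~\eqref{eq:second_conv} your argument genuinely diverges from the paper and is in fact shorter. The paper reduces to the fixed time $t=t_{0}$ by the scaling $k_{t+h}(x)=k_{t_{0}+h(t_{0}/t)}(x/\sqrt{t/t_{0}})$, invokes Lemma~\ref{boundScalingk} to compare $k_{t_{0}+h}$ with $k_{t_{0}}$, splits $\int_{K}$ into the regions $|z|\le M$ (handled by the already-established kernel convergence~\eqref{eq:first_conv}) and $|z|>M$ (handled by the Gaussian upper bound with $M$ chosen large, cf.\ \eqref{choice_M}), and finally scales back. You instead differentiate $k_{s}(x)=\int_{K}K_{s}(x,y)\,dy$ under the integral sign; the bound $k_{s}(y)\le 1$, the volume lower bound of Lemma~\ref{proof_lower_bound_ball_C} applied to both $V(x,\sqrt{s})$ and $V(y,\sqrt{s})$, and the substitution $y=x+\sqrt{s}u$ collapse the integral to $C\,k_{s}(x)/s$, from which $|\partial_{s}\log k_{s}(x)|\le C/s$ and the uniform convergence follow as in the first part. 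This avoids both the homogeneity reduction and the two-region decomposition, at the cost of justifying the interchange of $\partial_{s}$ and $\int_{K}$; the domination you sketch (restricting to $s\in[t_{0}/2,\,2t]$, using that $k_{s}$ decreases in $s$ together with Lemma~\ref{boundScalingk} to control $k_{s}(x)$, and the Gaussian-in-$y$ decay) is sufficient for this, so the step is sound. Overall your second part is a legitimate and arguably more transparent alternative to the paper's argument.
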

\begin{proof}
For the first convergence \eqref{eq:first_conv}, let $t>t_{0}$ and $x,y\in{K}$ with $\vert y-x\vert\leq M$. Then for $h\geq 0$,
\begin{equation*}\left\vert K_{t+h}(x,y)-K_{t}(x,y)\right\vert\leq \int_{0}^{h}\vert \partial_{t}K_{t+r}(x,y)\vert dr.\end{equation*}
Hence, by the last part of \cref{estimateK}, 
\begin{align*}
&\left\vert K_{t+h}(x,y)-K_{t}(x,y)\right\vert\\
&\leq C_{5}\int_{0}^{h} \frac{k_{t+r}(x)k_{t+r}(y)}{(t+r)\sqrt{V(x,\sqrt{t+r})V(y,\sqrt{t+r})}}\left(1+\frac{\vert x-y\vert^{2}}{t+r}\right)^{\beta+1}\exp\left(-\frac{\vert x-y\vert^{2}}{4(t+r)}\right)dr\\
&\leq C_{5}\int_{0}^{h} \frac{k_{t}(x)k_{t}(y)}{(t+r)\sqrt{V(x,\sqrt{t+r})V(y,\sqrt{t+r})}}\left(1+\frac{\vert x-y\vert^{2}}{t+r}\right)^{\beta+1}\exp\left(-\frac{\vert x-y\vert^{2}}{4(t+r)}\right)dr,
\end{align*}
where we have used that $k_{t}$ is decreasing in $t$. Applying now the lower bound in the first part of \cref{estimateK} and then \eqref{lower_bound_ball_C}, we get, for $\vert h\vert\leq t_{0}/2$,
\begin{align*}
\left\vert K_{t+h}(x,y)-K_{t}(x,y)\right\vert&
\leq \frac{C_{5}}{c_{2}}\int_{0}^{h}K_{t}(x,y)\frac{1}{t+r}\left(1+\frac{\vert x-y\vert^{2}}{t+r}\right)^{\beta+1}\times\\
&\quad\times\frac{\sqrt{V(x,\sqrt{t})V(y,\sqrt{t})}}{\sqrt{V(x,\sqrt{t+r})V(y,\sqrt{t+r})}}\exp\left(-\frac{\vert x-y\vert^{2}}{4(t+r)}+\frac{\vert x-y\vert^{2}}{C_{3}t}\right) dr\\&
 \leq C\int_{0}^{h}\frac{t^{d/2}}{(t+r)^{d/2}}\frac{1}{t+r}K_{t}(x,y)\left(1+\frac{2M^{2}}{t_{0}}\right)^{\beta+1}\exp\left(\frac{M^{2}}{C_3t_0}\right) dr,
\end{align*}
where $C$ is a positive constant. Hence, for some other constant $C$,
\begin{equation*}\left\vert K_{t+h}(x,y)-K_{t}(x,y)\right\vert\leq CK_{t}(x,y)\left(1-\frac{t^{d/2}}{(t+h)^{d/2}}\right),\end{equation*}
and as $h$ goes to zero such that $\vert h\vert\leq t_{0}/2$, $K_{t+h}(x,y)\sim K_{t}(x,y)$ uniformly in $t>t_{0}$, $x,y\in{K}$ with $\vert y-x\vert\leq M$. 

For the second convergence \eqref{eq:second_conv} in the statement of Lemma \ref{equivalenceKernelTime}, we first remark that by homogeneity, 
\begin{equation*}
     \frac{k_{t+h}(x)}{k_{t}(x)}=\frac{k_{1+h/t}(x/\sqrt{t})}{k_{1}(x/\sqrt{t})}.
\end{equation*}
Hence, it suffices to prove \eqref{eq:second_conv} for $t_{0}>0$, uniformly for all $x\in K$. By Lemma \ref{boundScalingk}, there exists $c'>0$ such that for all $\vert h\vert \leq t_{0}/2$ and $x\in K$,
\begin{equation}\label{using_lemma_18}
k_{t_{0}+h}(x)=\bar{k}(x/\sqrt{t_{0}+h})\leq c'\bar{k}(x/\sqrt{t_{0}})=c'k_{t_{0}}(x).
\end{equation}
Let $\delta>0$ and $M>0$ be such that 
\begin{equation}
\label{choice_M}
     C_{2}(1+c^{2})\frac{1}{c(t_{0}/2)^{d/2}}\int_{\vert z\vert>M}\exp\left(-\frac{2\vert z\vert^{2}}{3c_{3}t_{0}}\right)dz\leq \delta,
\end{equation}
where $c$ is the constant coming from \eqref{lower_bound_ball_C}. Based on \eqref{eq:first_conv}, let $0<h_{0}<t_{0}/2$ be such that for all $x,y\in{K}$ with $\vert y-x\vert \leq M$, and $h\in{\bf R}$ such that $\vert h\vert \leq h_{0}$,
\begin{equation*}(1-\delta)K_{t_{0}}(x,y)\leq K_{t_{0}+h}(x,y)\leq (1+\delta)K_{t_{0}}(x,y).\end{equation*} 
Then for such $x\in{K}$ and $0\leq \vert h\vert\leq h_{0}$, we have by the first part of the lemma 
\begin{align*}
\vert k_{t_{0}+h}(x)-k_{t_{0}}(x)\vert&\leq \int_{\vert z\vert \leq M}\vert K_{t_{0}+h}(x,x+z)-K_{t_{0}}(x,x+z)\vert dz\\
&\hspace{2cm}+\int_{\vert z\vert \geq M}\vert K_{t_{0}+h}(x,x+z)-K_{t_{0}}(x,x+z)\vert dz\\
&\leq \int_{\vert z\vert \leq M}\delta K_{t_{0}}(x,x+z) dz+\int_{\vert z\vert \geq M}\vert K_{t_{0}}(x,x+z)\vert dz\\
&\hspace{2cm}+\int_{\vert z\vert \geq M}\vert K_{t_{0}+h}(x,x+z)\vert dz\\
&\leq \delta k_{t_{0}}(x)+\int_{\vert z\vert \geq M} K_{t_{0}}(x,x+z) dz+\int_{\vert z\vert \geq M} K_{t_{0}+h}(x,x+z) dz.
\end{align*}
By Theorem \ref{estimateK}, the two last terms of the latter inequality are bounded by
\begin{multline}
\int_{\vert z\vert \geq M} K_{t_{0}}(x,x+z)dz+\int_{\vert z\vert \geq M} K_{t_{0}+h}(x,x+z) dz\\
\leq C_{2}\int_{\vert z\vert \geq M}\left\{\frac{k_{t_{0}}(x+z)k_{t_{0}}(x)}{ct_{0}^{d/2}}\exp\left(-\frac{\vert z\vert^{2}}{c_{3}t_{0}}\right)+\frac{k_{t_{0}+h}(x+z)k_{t_{0}+h}(x)}{c(t_{0}+h)^{d/2}}\exp\left(-\frac{\vert z\vert^{2}}{c_{3}(t_{0}+h)}\right)\right\}dz.\label{eqeqeq}
\end{multline}
Hence, using \eqref{using_lemma_18}, we obtain that \eqref{eqeqeq} is equal to or less than
\begin{align*}
&C_{2}\int_{\vert z\vert \geq M}\left\{\frac{k_{t_{0}}(x+z)k_{t_{0}}(x)}{ct_{0}^{d/2}}\exp\left(-\frac{\vert z\vert^{2}}{c_{3}t_{0}}\right)+\frac{c^{2}k_{t_{0}}(x+z)k_{t_{0}}(x)}{c(t_{0}+h)^{d/2}}\exp\left(-\frac{\vert z\vert^{2}}{c_{3}(t_{0}+h)}\right)\right\}dz\\
\quad &\leq C_{2}(1+c^{2}) k_{t_{0}}(x)\int_{\vert z\vert \geq M}\frac{1}{c(t_{0}/2)^{d/2}}\exp\left(-\frac{2\vert z\vert^{2}}{3c_{3}t_{0}}\right)dz\\
\quad&\leq\delta k_{t_{0}}(x),
\end{align*}
where the last inequality is due to the choice of $M>0$ in \eqref{choice_M}. Finally,
\begin{equation*}\vert k_{t_{0}+h}(x)-k_{t_{0}}(x)\vert\leq 2\delta k_{t_{0}}(x).\end{equation*}
Now, if $t>t_{0}$ and $\vert h\vert \leq h_{0}$, then by the previous result
\begin{equation*}
     \vert k_{t+h}(x)-k_{t}(x)\vert=\vert k_{t_{0}+h(t_{0}/t)}(x/\sqrt{t/t_{0}})-k_{t_{0}}(x/\sqrt{t/t_{0}})\vert
\leq 2\delta k_{t_{0}}(x/\sqrt{t/t_{0}})\leq 2\delta k_{t}(x),
\end{equation*}
where we have used the fact that $\vert h(t_{0}/t)\vert\leq h_{0}$ for $h\leq h_{0}$ and $t\geq t_{0}$. Hence, the uniform convergence holds for all $x\in K$ and $t\geq t_{0}$.
\end{proof}
\section{Fuk-Nagaev inequalities in a cone}\label{Fuk-Nagaev-cone}
We give here a useful generalization of \cite[Lem.~24]{DeWa-15} under stronger moment conditions on the increments. This lemma will also be used in a future work (see \cite{DuRaTaWa-20}).

\begin{lemma}\label{Lemma_24_revisited}
Let $0\leq \alpha\leq p$ and $A>0$, and suppose that the increment $X$ admits moments of order $q>\alpha$. Set 
\begin{equation*}S(x,n)^{+}=\sup_{1\leq \ell\leq n^{1-\epsilon}}\{\vert S(\ell)\vert: \tau_{x}>\ell\}.\end{equation*} 
Then, for each $s<(q-\alpha)/2$ and $\beta\in ((p/2-1)\vee 0,p/2)$, there exists $C>0$ such that
\begin{equation*}
     \e\bigl[(S(x,n)^{+})^{\alpha};S(x,n)^{+}\geq n^{1/2-\epsilon/8}\bigr]\leq Cn^{-s}n^{(1-(p/2-\beta))}(1+\vert x\vert)^{p-2\beta}
\end{equation*}
for all $x\in K$. In particular, uniformly on $x\in K$ with $\vert x\vert\leq A\sqrt{n}$,
\begin{equation*}
     \e\bigl[(S(x,n)^{+})^{\alpha};S(x,n)^{+}\geq n^{1/2-\epsilon/8}\bigr]\leq Cn^{-s+1}.
\end{equation*}
\end{lemma}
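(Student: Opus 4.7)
The plan is to combine a layer-cake decomposition with a cone-adapted Fuk--Nagaev tail bound, the main novelty being an interpolated survival estimate that draws the factor $(1+|x|)^{p-2\beta}n^{1-(p/2-\beta)}$ out of the cone constraint.

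Set $a:=n^{1/2-\epsilon/8}$ and $N:=\lfloor n^{1-\epsilon}\rfloor$. The layer-cake identity gives
\begin{equation*}
\e\bigl[(S(x,n)^{+})^{\alpha};S(x,n)^{+}\ge a\bigr]=a^{\alpha}\pr(S(x,n)^{+}\ge a)+\alpha\int_{a}^{\infty}t^{\alpha-1}\pr(S(x,n)^{+}\ge t)\,dt,
\end{equation*}
so it suffices to establish the uniform tail estimate
\begin{equation}\label{eq:proposed-tail}
\pr(S(x,n)^{+}\ge t)\le C(1+|x|)^{p-2\beta}\,n^{1-(p/2-\beta)}\,t^{-q}\qquad (t\ge a,\ x\in K).
\end{equation}
Indeed, integrating \eqref{eq:proposed-tail} yields a factor $a^{\alpha-q}=n^{-(q-\alpha)/2+(q-\alpha)\epsilon/8}\le n^{-s}$ for any $s<(q-\alpha)/2$, provided $\epsilon$ is chosen small enough depending on $s$.

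To prove \eqref{eq:proposed-tail}, I would split according to the size of the largest increment. A union bound gives
\begin{equation*}
\pr(S(x,n)^{+}\ge t)\le\sum_{j=1}^{N}\pr(\tau_{x}>j-1,\,|X(j)|\ge t/2)+\pr\Bigl(\sup_{\ell\le N}\bigl|S^{(\le t/2)}(\ell)\bigr|\ge t/2\Bigr),
\end{equation*}
where $S^{(\le t/2)}$ is the walk with increments truncated to norm at most $t/2$. The truncated term falls under the classical exponential Fuk--Nagaev/Bernstein inequality and is bounded by $2\exp(-ct^{2}/N)\le 2\exp(-cn^{3\epsilon/4})$, which is super-polynomially small and hence absorbed into \eqref{eq:proposed-tail}. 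For the large-jump term, the independence of $X(j)$ from the first $j-1$ increments gives
\begin{equation*}
\pr(\tau_{x}>j-1,\,|X(j)|\ge t/2)=\pr(\tau_{x}>j-1)\pr(|X|\ge t/2)\le\frac{C\e[|X|^{q}]}{t^{q}}\,\pr(\tau_{x}>j-1),
\end{equation*}
so the large-jump contribution is at most $Ct^{-q}\sum_{j=1}^{N}\pr(\tau_{x}>j-1)$.

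The core of the argument is then the interpolated survival bound
\begin{equation}\label{eq:interp-surv}
\pr(\tau_{x}>j)\le C(1+|x|)^{p-2\beta}\,j^{-(p/2-\beta)},\qquad\beta\in[0,p/2],
\end{equation}
obtained as the geometric mean (with weights $2\beta/p$ and $1-2\beta/p$) of the trivial bound $\pr(\tau_{x}>j)\le 1$ and the uniform upper bound $\pr(\tau_{x}>j)\le C(1+|x|)^{p}j^{-p/2}$. The latter follows from the heat-kernel estimate $k_{t}(x)\le Cu(x)/u(x_{\sqrt{t}})$ of Theorem~\ref{estimatek} transferred to the random walk via the coupling \eqref{coupling_few_moments}, together with $u(x)\le C(1+|x|)^{p}$ (by homogeneity of $u$) and $u(x_{\sqrt{t}})\ge ct^{p/2}$ from \eqref{lower_bound_y_1}. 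The assumption $\beta>(p/2-1)\vee 0$ is exactly what makes the partial sums $\sum_{j=1}^{N}j^{-(p/2-\beta)}$ grow at the rate $N^{1-(p/2-\beta)}$, so \eqref{eq:interp-surv} yields $\sum_{j=1}^{N}\pr(\tau_{x}>j-1)\le C(1+|x|)^{p-2\beta}n^{1-(p/2-\beta)}$ and hence \eqref{eq:proposed-tail}.

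The main technical obstacle I anticipate is upgrading the asymptotic $\pr(\tau_{x}>n)\sim\chi V(x)n^{-p/2}$ of \eqref{asymptotic_survival_time} into a truly uniform upper bound $\pr(\tau_{x}>j)\le C(1+|x|)^{p}j^{-p/2}$ valid for every $x\in K$ and every $j\ge 1$; without such uniformity the interpolation above collapses, since its point is to trade off between the two regimes $j\lesssim |x|^{2}$ and $j\gtrsim |x|^{2}$. The specialization to $|x|\le A\sqrt{n}$ is then automatic: the estimate $(1+|x|)^{p-2\beta}\le Cn^{(p-2\beta)/2}$ exactly cancels the factor $n^{-(p/2-\beta)}$ in \eqref{eq:proposed-tail}, leaving $Cn^{-s+1}$.
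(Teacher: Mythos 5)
Your layer-cake route is structurally different from the paper's proof, which never passes through a $t$-dependent tail estimate: it splits directly on the time $\ell_n$ of the supremum and the first large increment $\mu_n$, writes $S(\ell_n)=(S(\ell_n)-S(\mu_n))+X(\mu_n)+S(\mu_n-1)$, and bounds the $\alpha$-th moment of each piece via Doob, Markov and Fuk--Nagaev, using the $t$-independent truncation level $n^{1/2-\epsilon/4}$. On the survival side, the endpoint $\pr(\tau_x>j)\le C(1+|x|)^pj^{-p/2}$ you flag as the main obstacle is not actually needed: with $\beta>0$ the exponent $r:=p-2\beta$ is strictly less than $p$, so McConnell's bound \eqref{exit_time} and Markov's inequality already give $\pr(\tau_x>j)\le C(1+|x|)^{p-2\beta}j^{-(p/2-\beta)}$, and this is in effect what the paper uses through $\sum_{j}j^{-1+(p/2-\beta)}\pr(\tau_x\ge j-1)\le C\,\e[\tau_x^{p/2-\beta}]$.

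The genuine gap lies in the truncated term. You take the truncation level equal to the threshold, $y=t/2$, and assert $\pr(\sup_{\ell\le N}|S^{(\le t/2)}(\ell)|\ge t/2)\le 2\exp(-ct^2/N)$. Neither Bernstein nor the Fuk--Nagaev inequality \eqref{Fuk-Nagaev} gives an exponential estimate when the truncation level is comparable to the threshold: taking $x=y=t/2$ in \eqref{Fuk-Nagaev} yields $2d\,e^{1/\sqrt d}\bigl(4\sqrt d\,N/t^2\bigr)^{1/\sqrt d}$, which is only polynomially small in $N/t^2$, and Bernstein's exponent $(t/2)^2/\bigl(2(N+(t/2)^2/3)\bigr)$ tends to the constant $3/2$ as $t^2/N\to\infty$. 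Feeding the polynomial bound into $\alpha\int_a^\infty t^{\alpha-1}(\cdot)\,dt$ produces $N^{1/\sqrt d}\int_a^\infty t^{\alpha-1-2/\sqrt d}\,dt$, which diverges once $\alpha\ge 2/\sqrt d$ --- and $\alpha$ may be as large as $p$, which for a narrow cone is far bigger than $2/\sqrt d$. Shrinking the truncation level to $n^{1/2-\epsilon/4}$ restores super-polynomial decay of the Fuk--Nagaev term, but then the big-jump probability $\pr(|X|\ge n^{1/2-\epsilon/4})$ no longer decays in $t$, so the claimed $t^{-q}$ tail bound cannot be obtained by this two-way split. This is precisely the tension that the paper's additive decomposition of $S(\ell_n)$ is designed to resolve.
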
 
The proof of the lemma follows closely the one of \cite[Cor.~23]{DeWa-15}: the main ingredient are Fuk-Nagaev inequalities in the form of \cite[Lem.~24]{DeWa-15} (see also \cite{FuNa-71}), which we now recall:
\begin{equation}\label{Fuk-Nagaev}
\pr\left(\vert S(n)\vert>x, \max_{1\leq k\leq n}\vert X(k)\vert\leq y\right)\leq 2d\exp\left(\frac{x}{\sqrt{d}y}\right)\left(\frac{\sqrt{d}n}{xy}\right)^{x/(\sqrt{d}y)}.
\end{equation}
Moreover, we will use the following bounds from  \cite[Thm~3.1]{Mc-84} on the moments of the exit time: for all $r<p$, there exists $C>0$ such that
\begin{equation}\label{exit_time}
\e[\tau_{x}^{r/2}]<C(1+\vert x\vert)^{r}.
\end{equation}
\begin{proof}
Let $\ell_{n}\leq n^{1-\epsilon}$ be the time where $S(x,n)^{+}$ is reached by the random walk and 
\begin{equation*}
     \mu_{n}=\inf\{j\geq 1: \vert X(j)\vert\geq n^{1/2-\epsilon/4}\}.
\end{equation*}
We have
\begin{align*}
     \e\bigl[(S(x,n)^{+})^{\alpha};S(x,n)^{+}\geq n^{1/2-\epsilon/8}\bigr]&= \e\bigl[(S(x,n)^{+})^{\alpha};S(x,n)^{+}\geq n^{1/2-\epsilon/8},\mu_{n}>\ell_{n} \bigr]\\
&\quad+\sum_{j=1}^{n^{1-\epsilon}}\e\bigl[(S(x,n)^{+})^{\alpha};S(x,n)^{+}\geq n^{1/2-\epsilon/8},j\leq \ell_{n},\mu_{n}=j\bigr]\\
&= E_{1}+E_{2}.
\end{align*}
Remark first that 
\begin{equation*}
     E_{1}\leq \sum_{\ell=1}^{n}\e\bigl[\vert S(\ell)\vert^{\alpha},\vert S(\ell)\vert> \ell^{1/2-\epsilon/8},\max_{1\leq k\leq \ell}\vert X(k)\vert\leq n^{1/2-\epsilon/4}\bigr].
\end{equation*}
Hence, by Hölder's inequality and \eqref{Fuk-Nagaev}, the term $E_{1}$ is bounded by $\exp(-Cn^{\epsilon'})$ for some positive constants $C$ and $\epsilon'$ independent of $x\in K$. Then, we bound the second term as 
\begin{multline*}
E_{2}\leq C\sum_{j=1}^{n^{1-\epsilon}}\e\bigl[\vert S(\ell_{n})-S(j)\vert^{\alpha} ;\tau_{x}\geq j,\ell_{n}\geq j, \mu_{n}=j]\\+\e\bigl[\vert X(j)\vert^{\alpha} ;\tau_{x}\geq j-1, \mu_{n}=j\bigr]
+C\e\bigl[\vert S(j-1)\vert^{\alpha} ;\tau_{x}\geq j, \mu_{n}=j\bigr].
\end{multline*}
Using Doob maximal inequality (together with Hölder's inequality for $\alpha<2$) and then Markov inequality, we get
\begin{align*}
\sum_{j=1}^{n^{1-\epsilon}}\e\bigl[\vert S(\ell_{n})-S(j)\vert^{\alpha} ;\tau_{x}&\geq j,\ell_{n}\geq j,\mu_{n}=j\bigr]\\
&\quad\leq Cn^{\alpha(1-\epsilon)/2}\sum_{j=1}^{n^{1-\epsilon}}\pr(\tau_{x}\geq j)\pr(\vert X(j)\vert\geq n^{1/2-\epsilon/4})\\
&\quad\leq  Cn^{\alpha(1-\epsilon)/2}n^{-q(1/2-\epsilon/4)}\sum_{j=1}^{n^{1-\epsilon}}\pr(\tau_{x}\geq j).
\end{align*}
Then, by Markov inequality,
\begin{align*}
\sum_{j=1}^{n^{1-\epsilon}}\e\bigl[\vert X(j)\vert^{\alpha} ;\tau_{x}\geq j-1,\mu_{n}=j\bigr]&\leq \e(\vert X\vert^{\alpha},\vert X\vert \geq n^{1/2-\epsilon/4})\sum_{j=1}^{n^{1-\epsilon}}\pr(\tau_{x}\geq j-1)\\
&\leq Cn^{-(q-\alpha)(1/2-\epsilon/4)}\sum_{j=1}^{n^{1-\epsilon}}\pr(\tau_{x}\geq j-1).
\end{align*}
Using again \eqref{Fuk-Nagaev} yields
\begin{align*}
\e\bigl[\vert S(j-1)\vert^{\alpha} ;\tau_{x}\geq j, \mu_{n}=j\bigr]&=\e\bigl[\vert S(j-1)\vert^{\alpha};\vert S(j-1)\vert>n^{1/2-\epsilon/8} ,\tau_{x}\geq j, \mu_{n}=j\bigr]\\
&+\e\bigl[\vert S(j-1)\vert^{\alpha};\vert S(j-1)\vert\leq n^{1/2-\epsilon/8} ,\tau_{x}\geq j, \mu_{n}=j\bigr]\\
&\leq C\exp(-n^{\epsilon'})+Cn^{\alpha(1/2-\epsilon/8)}\pr(\tau_{x}\geq j-1){\bf P}(\mu_{n}=j)\\
&\leq Cn^{\alpha(1/2-\epsilon/8)}n^{-q(1/2-\epsilon/4)}\pr(\tau_{x}\geq j-1),
\end{align*}
and thus 
\begin{equation*}\sum_{j=1}^{n^{1-\epsilon}}\e\bigl[\vert S(j-1)\vert^{\alpha} ;\tau_{x}\geq j, \mu_{n}=j\bigr]\leq \sum_{j=1}^{n^{1-\epsilon}}Cn^{-(q-\alpha)/2+(2q-\alpha)\epsilon/8}\pr(\tau_{x}\geq j-1).\end{equation*}
Hence, we have
\begin{equation*}E_{2}\leq Cn^{-(q-\alpha)/2+g(\epsilon)}\sum_{j=1}^{n^{1-\epsilon}}\pr(\tau_{x}\geq j-1),\end{equation*}
with $g:{\bf R}\rightarrow {\bf R}$ linear. Then, we have  for $\beta\in ((p/2-1)\vee 0,p/2)$,
\begin{align*}
E_{2}\leq&\ Cn^{-(q-\alpha)/2+g(\epsilon)+(1-(p/2-\beta))}\sum_{j=1}^{n^{1-\epsilon}}j^{-1+(p/2-\beta)}\pr(\tau_{x}\geq j-1)\\
\leq&\ Cn^{-(q-\alpha)/2+g(\epsilon)+(1-(p/2-\beta))}\e(\tau_{x}^{p/2-\beta})\leq Cn^{-(q-\alpha)/2+g(\epsilon)+(1-(p/2-\beta))}(1+\vert x\vert)^{p-2\beta},
\end{align*}
where we have used \eqref{exit_time} in the last inequality. This yields the first part of the lemma for $\epsilon$ small enough. Moreover, for any $s<(q-\alpha-2)/2$ we have by choosing $\epsilon$ small enough
\begin{equation*}E_{2}\leq Cn^{-s+1},\end{equation*}
for all $x\in K$, $\vert x\vert \leq A\sqrt{n}$.
\end{proof}

\end{document}